\newtheorem{thm}{Theorem}[section]
\newtheorem{cor}[thm]{Corollary}
\newtheorem{lem}[thm]{Lemma}
\newtheorem{prop}[thm]{Proposition}
\theoremstyle{definition}
\newtheorem{defn}[thm]{Definition}
\newtheorem{Example}[thm]{Example}
\newtheorem{rem}[thm]{Remark}
\numberwithin{equation}{section}
\def\C{{\mathbbm C}}
\def\N{{\mathbbm N}}
\def\R{{\mathbbm R}}
\def\Z{{\mathbbm Z}}
\def\1{{\mathbbm{1}}}
\newcommand{\Hom}{\operatorname{Hom}}
\newcommand{\HOM}{\operatorname{HOM}}
\renewcommand{\to}{\rightarrow}
\def\dif{\partial}
\def\lra{{\longrightarrow}}
\def\dmod{{\mathrm{\mbox{-}mod}}} 
\def\pmod{{\mathrm{\mbox{-}pmod}}} 
\def\Id{\mathrm{Id}}
\def\mc{\mathcal}
\def\shuffle{\,\raise 1pt\hbox{$\scriptscriptstyle\cup{\mskip
 -4mu}\cup$}\,}
\newcommand{\refequal}[1]{\xy {\ar@{=}^{#1}
(-1,0)*{};(1,0)*{}};
\endxy}
\newcommand{\oplusop}[1]{{\mathop{\oplus}\limits_{#1}}}
\newcommand{\mH}{\mathrm{H}} 
\newcommand{\Tot}{\mathrm{Tot}}
\begin{document}

\allowdisplaybreaks

\newcommand{\arXivNumber}{1911.02503}

\renewcommand{\thefootnote}{}

\renewcommand{\PaperNumber}{019}

\FirstPageHeading

\ShortArticleName{A Faithful Braid Group Action on the Stable Category of Tricomplexes}

\ArticleName{A Faithful Braid Group Action\\ on the Stable Category of Tricomplexes\footnote{This paper is a~contribution to the Special Issue on Algebra, Topology, and Dynamics in Interaction in honor of Dmitry Fuchs. The full collection is available at \href{https://www.emis.de/journals/SIGMA/Fuchs.html}{https://www.emis.de/journals/SIGMA/Fuchs.html}}}

\Author{Mikhail KHOVANOV~$^\dag$ and You QI~$^\ddag$}

\AuthorNameForHeading{M.~Khovanov and Y.~Qi}

\Address{$^\dag$~Department of Mathematics, Columbia University, New York, NY 10027, USA}
\EmailD{\href{mailto:khovanov@math.columbia.edu}{khovanov@math.columbia.edu}}
\URLaddressD{\url{https://www.math.columbia.edu/~khovanov/}}

\Address{$^\ddag$~Department of Mathematics, University of Virginia, Charlottesville, VA 22904, USA}
\EmailD{\href{mailto:yq2dw@virginia.edu}{yq2dw@virginia.edu}}
\URLaddressD{\url{https://math.virginia.edu/people/yq2dw/}}

\ArticleDates{Received November 07, 2019, in final form March 19, 2020; Published online March 29, 2020}

\Abstract{Bicomplexes of vector spaces frequently appear throughout algebra and geo\-metry. In Section~2 we explain how to think about the arrows in the spectral sequence of a~bicomplex via its indecomposable summands. Polycomplexes seem to be much more rare. In Section~3 of this paper we rethink a well-known faithful categorical braid group action via an action on the stable category of tricomplexes.}

\Keywords{braid group; categorical action; bicomplexes; spectral sequence; tricomplexes; stable category}

\Classification{20F36; 18G05; 18G40}

\begin{flushright}
\it To Dmitry Fuchs on his 80th birthday
\end{flushright}

\renewcommand{\thefootnote}{\arabic{footnote}}
\setcounter{footnote}{0}

\section{Introduction}
\begin{flushright}
\parbox{3.5in}{\small ``The impact of spectral sequences on algebraic topology was tremendous: Many major problems of topology, both solved and unsolved, became exercises for
students \dots'' \\
A.~Fomenko and D.~Fuchs~\cite[Preface]{FF}}
\end{flushright}

Representation theory, which has been established for over a century, deals with linear actions of groups and algebras. Much more recent is the discovery of interesting categorical actions of groups, primarily discrete groups. In these examples discrete
groups act by symmetries of categories, which in many cases are triangulated, and
the action preserves the triangular structure. One of the first nontrivial
examples appeared in~\cite{KhSe}, see also~\cite{KH, ST}. There the $n$-strand braid group $\mathrm{Br}_n$ acts on the homotopy category of complexes of modules over a particular finite-dimensional algebra $A_{n-1}$. The action is by exact functors and on the Grothendieck group the action descends either to the Burau representation of the braid group (if one keep tracks of an additional grading on modules, in addition to the homological grading) or
to the reduced permutation action of the symmetric group. Neither of these linear actions is faithful, but its categorical lifting was shown to be faithful in~\cite{KhSe}.

The algebra $A_{n-1}$ (the \emph{zigzag} algebra) is the quotient of the path algebra of the quiver with $n-1$ vertices and edges connecting adjacent vertices in both directions (assuming $n> 3$, with minor changes necessary for $n=2,3$)
\begin{gather*}
\xymatrix{
 \overset{1}{\circ} &
 \cdots \ltwocell{'}&
 \overset{i-1}{~\circ~}\ltwocell{'}&
 \overset{i}{\circ}\ltwocell{'}&
 \overset{i+1}{~\circ~}\ltwocell{'}&
 \cdots \ltwocell{'}&
 \overset{n-1}{\circ}\ltwocell{'}
 }
\end{gather*}

Generators of $A_{n-1}$ corresponding to arrows in the quiver are
denoted $(i|i\pm 1)$. The defining relations
\[
(i|i+1|i+2)=0,\qquad (i|i-1|i-2)=0,\qquad (i|i-1|i)=(i|i+1|i)
\]
(for $i$'s for which both sides of a relation make sense) are quadratic,
$A_{n-1}$ is finite dimensional, with a basis consisting of idempotents $(i)$,
edges $(i,i\pm 1)$ and length two paths $(i|i\pm 1|i)$. For $1< i < n$ indecomposable projective $A_{n-1}$ module $P_i = A_{n-1}(i)$ is four-dimensional, with the basis $\{(i),(i-1|i),(i+1|i),(i|i-1|i)\}$ and
can be visualized as a diamond.
\[
\xymatrix{
& (i|i-1|i) & \\
(i-1|i) \ar[ur]^{(i|i-1)} && (i+1|i). \ar[ul]_{(i|i+1)}\\
& (i)\ar[ur]_{(i+1|i)} \ar[ul]^{(i-1|i)} &\\
}
\]

The defining relations in $A_{n-1}$ can be interpreted as the defining relations in the category of bicomplexes. Namely, let
\[
\partial_1 = \sum_{i=1}^{n-2}(i|i+1), \qquad
\partial_2 = \sum_{i=2}^{n-1}(i|i-1).
\]
Then the defining relations in $A_{n-1}$ can be rewritten as
\[ \partial_1^2=0, \qquad \partial_2^2=0, \qquad \partial_1\partial_2 =
\partial_2\partial_1.
\]
These are exactly the relations on the two differentials in a bicomplex. A bicomplex is built out of vector spaces placed in the vertices of an integral lattice~$\Z^2$, with the differentials going along the two coordinates, with the unit step each.

One can introduce a grading on $A_{n-1}$ by making, for instance, left-pointing arrows (edges) in the quiver to have degree one and right-pointing edges degree zero. The unit element of~$A_{n-1}$ decomposes as the sum of $n-1$ idempotents, one for each vertex of the graph,
$1 = (1)+(2)+\dots + (n-1)$, inducing the decomposition of an $A_{n-1}$-module into a sum of vector spaces
\begin{gather*} M = \bigoplus_{i=1}^{n-1} (i) M ,\end{gather*}
and the additional grading on $M$ leads to the bigrading, with the left and right directed edges changing the bigrading by $(1,0)$ and $(0,1)$, respectively.

In this way, graded $A_{n-1}$-modules may be identified with bicomplexes with nonzero terms restricted to a suitable area of the lattice $\Z^2$. Changing the indexing of quiver vertices from $\{1,2,\dots, n-1\}$ to $\Z$ by passing
to the quiver that is infinite in both directions (see figure in equation~(\ref{quiver-Q}))
results in a non-unital algebra $A_{\infty}$ with a system of idempotents
$\{(i)\}_{i \in \Z}$ such that graded $A_{\infty}$-modules naturally correspond to bicomplexes.

The braid group $\mathrm{Br}_n$ acts on the homotopy category of (either graded or ungraded) $A_{n-1}$-modules by tensoring with a suitable complex of $A_{n-1}$-bimodules. This works as well in the limit of $A_{\infty}$-modules, with the braid group $\mathrm{Br}_\infty$ with strands (and generators $\sigma_i$) enumerated by integers.

Passing from modules over an algebra $B$ to complexes of modules means working with suitably graded modules over the algebra $B[d]/\big(d^2\big)$. In our case, graded $A_{n-1}$ or $A_{\infty}$ modules can be identified with bicomplexes (more precisely, there is an equivalence of corresponding abelian categories). Consequently, complexes of
$A_{n-1}$ and $A_{\infty}$-modules may be identified with tricomplexes, with the homological
grading in $A_{n-1}[d]/\big(d^2\big)$ corresponding to the additional,
third, grading in tricomplexes.

Passing from complexes to the homotopy category of complexes (of modules over an algebra~$B$) means modding out by null-homotopic morphisms. If one restricts to complexes of projective $B$-modules, which is a common and important subcategory of the category of complexes, this means killing morphisms which factor through a direct sum of objects of the form
\[
0 \lra B \stackrel{\mathrm{Id}}{\lra} B \lra 0
\]
in various homological degrees. Specializing~$B$ to~$A_{\infty}$, the above complex decomposes as a direct sum of terms of the form
\begin{gather}\label{exact-comp}
 0 \lra A_{\infty}(i) \stackrel{\mathrm{Id}}{\lra} A_{\infty}(i) \lra 0
\end{gather}
for various $i\in \Z$ (cf.\ the next diagram below). By keeping track of the additional grading, one can further shift these copies of $A_{\infty}(i)$ and parametrize them by a pair of integers $(i,j)$. Together with the homological grading~$k$, one gets a 3-parameter family of possible indecomposable summands that each represent the zero complex in the homotopy category.

If this setup is converted into the language of bicomplexes and tricomplexes, the module~$A_{\infty}(i)$ corresponds to a free rank one bicomplex in the bigrading associated to the idempotent~$(i)$ and another independent grading~$j$, see Definition~\ref{def-algebra-A}. The complex~(\ref{exact-comp}) corresponds to a free rank one tricomplex, with its generator placed in tridegree $(i,0,0)$. We refer the reader to~\eqref{eqn-G-commute-with-grading-shifts} and~\eqref{eqn-G-on-Pr} for the precise matching of trigradings and shifts:
\[
\xymatrix@!=2.3pc{
(i|i-1|i) \ar[rr]^{\Id} && (i|i-1|i) &\\
& (i+1|i)\ar[rr]^(0.4){\Id} \ar[ul]^{\dif_1} && (i+1|i) \ar[ul]^{\dif_1} \\
(i-1|i)\ar[uu]^{\dif_2} \ar'[r][rr]^(-0.25){\Id} && (i-1|i) \ar'[u][uu]^{\dif_2} &\\
 & (i) \ar[rr]^{\Id} \ar[uu]^(0.3){\dif_2} \ar[ul]^{\dif_1} && (i) \ar[uu]^{\dif_2} \ar[ul]^{\dif_1}\\
}
\]
Here in the diagram, the (basis elements of the) first copy of $A_\infty(i)$ in \eqref{exact-comp} is exhibited as the left-most square in the cube, while the second copy of $A_\infty$ is displayed as the right-most square. These two squares are connected by the homological differential labelled with $\mathrm{Id}$ maps.

In the homotopy category of projective graded $A_{\infty}$-modules, a morphism is zero if it factors through the object which is a direct sum of complexes (\ref{exact-comp}) over various $i$, $j$, and $k$, where $i$ labels the idempotent, $j$ is the additional grading parameter in $A_{\infty}$, and $k$ is the homological grading. Converting this to tricomplexes, one unites the three integer grading parameters~$i$,~$j$,~$k$ of different origins into a single trigrading on tricomplexes. The complex~(\ref{exact-comp}) becomes a free tricomplex of rank one that can sit in any position relative to the trigrading. Killing morphisms that factor through sums of such free rank one tricomplexes is equivalent to the condition that one is working in the stable category of tricomplexes, that is in the category of tricomplexes modulo the ideal of morphisms that factor through a free tricomplex.

Tricomplexes can be described as trigraded modules over the algebra
$\Lambda_3$ with generators $\partial_1$,~$\partial_2$,~$\partial_3$ and relations
\begin{gather*} \partial_i^2 =0, \qquad i=1,2,3, \qquad \partial_i\partial_j = \partial_j\partial_i , \qquad i\not= j.
\end{gather*}
This 8-dimensional algebra is Frobenius, and it is even a Hopf algebra in the category of super-vector spaces. Consequently, its stable category of trigraded modules is triangulated (and monoidal, due to the Hopf algebra structure).

The braid group action on the homotopy category of $A_{n-1}$ and $A_{\infty}$-modules transfers to the stable category of tricomplexes. Note that the homotopy category of $A_{\infty}$-modules and the stable category of tricomplexes are not equivalent, but rather admit equivalent subcategories with matching actions of the braid group. On the $A_{\infty}$ side, it is the homotopy category of complexes of projective modules, and on the tricomplex side, the stable subcategory generated by tricomplexes that restrict to free bicomplexes relative to the subalgebra generated by differentials $\partial_1$, $\partial_2$.
The braid group action respects these subcategories and the equivalence between them.

The braid group acts by exact functors on this triangulated category of tricomplexes. The actions does not respect the monoidal structure, though, and choosing the action requires singling out one differential out of three. Choosing different differentials gives three commuting braid group actions.

For now, we view this example as a curiosity. One natural question is whether our example fits into the more general framework of Hopfological algebra~\cite{Kh3,Q}, where stable categories of modules over Hopf algebras, such as~$\Lambda_3$, are used as base categories for new constructions of categorifications (see, e.g.,~\cite{KQ}) or, perhaps, some other algebro-geometric structures. Another open problem is whether homotopy categories of complexes over other algebras of importance in categorification, such as arc algebras~\cite{Kh2}, can be rethought through some generalization of the stable category of tricomplexes.

Tricomplexes seem to appear exceedingly rarely in mathematics. Currently, they have made appearances in the BRST theory~\cite{Sh}, in the deformation theory of Hopf algebras~\cite{Y}, and in the algebraic K-theory~\cite{Ca}. A modified notion of a tricomplex, called quasi-tricomplex, occurs in the theory of variation~\cite{O}.

Beyond tricomplexes, polycomplexes can be related to $(\C^{\ast})^n$-equivariant coherent sheaves on~$\mathbb{CP}^{n-1}$ via a version of Beilinson--Gelfand--Gelfand--Koszul duality.

The braid group action on the stable category of tricomplexes is constructed in Section~\ref{sectricomplex} of this paper. In Section~\ref{secbicomplex} we explain a way to think about arrows in the spectral sequence of a bicomplex of vector spaces via indecomposable modules over the rings $A_{n-1}$ and $A_{\infty}$. This relation was independently discovered by Stelzig~\cite{St-2018}.

\section{Spectral sequences via indecomposable bicomplexes} \label{secbicomplex}

\begin{flushright}
\parbox{3.5in}{\small ``The subject of spectral sequences is elementary,
 but the notion of the spectral sequence of a double complex involves so many
objects and indices that it seems at first repulsive.'' D.~Eisenbud~\cite[Appendix 3.13]{Eisenbud}}
\end{flushright}

The standard textbook approach to spectral sequences makes them seem sophisticated and mysterious gadgets~\cite{CE, GM, McCleary,Weibel, W} and \cite[Appendix 3.13]{Eisenbud}.
Timothy Chow, in the introduction to his article on spectral sequences~\cite{Chow},
quotes the opinions of experts who, essentially, say that the definition is so complicated that you just have to get used to it.

The goal of this section is to explain spectral sequences, restricted to
bicomplexes of vector spaces, in a simple and straightforward way. Most of this section has appeared in lectures to graduate students by the first author, see for instance the informal lecture notes~\cite{Kh-2010}. Similar results also appeared in Stelzig~\cite{St-2018}. We warn the reader that this elementary approach works only for a bicomplex of vector spaces. Bicomplexes and filtered complexes that appear in spectral sequences in algebraic topology carry an enormous amount of extra structure, such as an action of the Steenrod algebra when working over~$\Z/p$, and cannot be easily understood in this elementary way. The complexity and beauty of these structures are captured in the Fomenko and Fuchs classic~\cite{FF} and other books, see McCleary~\cite{McCleary}.

\subsection{Cohomology}
Let $\Bbbk$ be a field and $M$
\begin{gather*} \dots \stackrel{d^{i-1}}{\lra} M^i \stackrel{d^i}{\lra} M^{i+1} \stackrel{d^{i+1}}{\lra} \cdots \end{gather*}
a complex of $\Bbbk$-vector spaces. We allow unbounded complexes and infinite-dimensional
vector spaces. It is easy to see that $M$ decomposes into
the direct sum of length zero complexes
\begin{gather*} 
0 \lra \mH^i \lra 0,
\end{gather*}
with a vector space $\mH^i$ in degree~$i$, and length one complexes
\begin{gather}\label{eqncontractiblesummand}
 0 \lra W^i \stackrel{\mathrm{id}}{\lra} W^i \lra 0,
\end{gather}
with two copies of a vector space $W^i$ in degrees $i$ and $i+1$. Thus,
 \begin{gather*} M^i \cong \mH^i \oplus W^i \oplus W^{i-1},\end{gather*}
although this direct sum decomposition of the vector space $M^i$ is not canonical. The
inclusion of the direct sum $\mH^i \oplus W^{i-1} \subset M^i$ is canonical, being
the inclusion $\operatorname{ker}(d^i) \subset M^i$.
The $i$-th cohomology group $\mH^i(M)$ of $M$ is canonically isomorphic to~$\mH^i$. Direct summands \eqref{eqncontractiblesummand} are contractible (recall that a complex is called \emph{contractible} if the identity endomorphism is null-homotopic).

\begin{Example}
Let $X$ be a smooth compact manifold and $(\Omega(X), d)$ the
de Rham complex of smooth forms on $X$. In this case $\mH^i(X,\R)$
are finite-dimensional vector spaces, while the vector spaces~$\Omega^i(X)$
and hence $W^i$ are infinite-dimensional. The bulk of the complex~$\Omega(X)$
is occupied by contractible ``junk'', while the ``valuable part'' (cohomology) has
small size.
If we equip $X$ with a Riemannian metric $g$, the operator $d^{\ast}=\pm \ast d\ast$
adjoint to $d$ gives rise to the Laplace operator
\begin{gather*}\Delta\colon \ \Omega^i(X) \lra \Omega^i(X), \qquad
\Delta= dd^{\ast} + d^{\ast} d.\end{gather*}
The Laplace operator provides a \emph{canonical} embedding of each complex
$ 0 \lra \mH^i(X,\R) \lra 0 $ into the complex $(\Omega(X), d)$, via the isomorphism
 $\mH(X,\R) \cong \operatorname{ker}(\Delta)$.
\end{Example}

A complex of $\Bbbk$-vector spaces is the same as a graded module over the exterior
$\Bbbk$-algebra $\Lambda_1$ on one generator~$d$ of degree~$1$:
\begin{gather*} \Lambda_1 := \Bbbk [d]/\big(d^2\big). \end{gather*}
The $i$-th homogeneous piece of a graded $\Lambda_1$-module $M$ is
a vector space~$M^i$, and the action of~$d$ is exactly the differential
$d\colon M^i \lra M^{i+1}$.

The category $\mc{M}_1$ of graded modules over $\Lambda_1$ is Krull--Schmidt, and
any module (even infinite-dimensional) decomposes into a direct sum of
indecomposable modules $S^i$ and $P^i$. Here $S^i$ is the one-dimensional
$\Bbbk$-vector space placed in degree $i$, and corresponds to the complex
$0 \lra \Bbbk \lra 0$. The differential acts by $0$ and the module $S^i$ is simple.
The module $P^i = \Lambda_1\{ i\}$ is free and corresponds to the complex
\begin{gather*} 0 \lra \Bbbk \stackrel{1}{\lra} \Bbbk \lra 0. \end{gather*}
Thus,
\begin{gather*} M \cong \oplusop{i\in \Z}\big(\mH^i \otimes S^i\big) \oplus \big(W^i\otimes P^i\big),\end{gather*}
and the cohomology of~$M$ only catches the first terms in the sum. Recall that an object~$M$ of an additive category is called \emph{indecomposable} if~$M$ is not isomorphic to a direct sum $N_1\oplus N_2$ with both $N_1$, $N_2$ nontrivial.

\subsection{Bicomplexes}\label{subsec-bicomplexes}
Let us now move on to bicomplexes. A bicomplex $M$ over a field $\Bbbk$ is
a family $\big\{M^{i,j}\big\}$ of vector spaces, for $i,j\in \Z$, and maps
\begin{gather*}\dif_1\colon \ M^{i,j} \lra M^{i+1,j}, \qquad \dif_2\colon \ M^{i,j}\lra M^{i,j+1}\end{gather*}
subject to the equations
\begin{gather*} \dif_1^2 =0, \qquad \dif_2^2=0, \qquad \dif_1 \dif_2 + \dif_2 \dif_1 =0,\\
\xymatrix{
& \vdots & \vdots & & \\
\ar[r]^-{\dif_1} & M^{i,j+1} \ar[r]^{\dif_1} \ar[u]^-{\dif_2} & M^{i+1,j+1} \ar[r]^-{\dif_1} \ar[u]^{\dif_2} & \\
\ar[r]^-{\dif_1} & M^{i,j} \ar[r]^-{\dif_1} \ar[u]^-{\dif_2} & M^{i,j+1} \ar[r]^-{\dif_1} \ar[u]^{\dif_2} & \\
& \ar[u]^-{\dif_2} & \ar[u]^-{\dif_2} & &
}
\end{gather*}

Let $\Lambda_2$ be the exterior $\Bbbk$-algebra on two generators $\dif_1$, $\dif_2$, so that the above equations are the defining relations for the generators. $\Lambda_2$ has a natural bigrading by
\begin{gather*} \deg(\dif_1) = (1,0), \qquad \deg(\dif_2) = (0,1).\end{gather*}
A bicomplex~$M$ is the same as a bigraded left $\Lambda_2$-module. We denote the category of bicomplexes by~$\mc{M}_2$.

We say that a bicomplex $M$ is \emph{bounded} if only finitely many $M^{i,j}$ are not~$0$.

\begin{Example}\label{egindecomposableLambda2mod}
Let us describe some bounded indecomposable bicomplexes.
\begin{enumerate}\itemsep=0pt
\item[(1)] The bicomplex $S^{i,j}$ is one-dimensional with a copy of $\Bbbk$ sitting in the $(i,j)$-th bidegree:
\[
S^{i,j}\colon \
\begin{gathered}
\xymatrix{
& 0 & \\
0\ar[r] & \Bbbk \ar[r]\ar[u] & 0.\\
& 0\ar[u] &
}
\end{gathered}
\]
In other words, $S^{i,j}$ is the simple $\Lambda_2$-module sitting in bidegree $(i,j)$.
\item[(2)] The indecomposable bicomplex $P^{i,j} \cong \Lambda_2\{i,j\}$ is a free rank one module (looking like a~square on a planar lattice), a copy of~$\Lambda_2$ with bigrading shifted, so that the nonzero term in the southwest corner sits in $(i,j)$-th degree:
\[
P^{i,j}\colon \
\begin{gathered}
\xymatrix{
 & 0 & 0 & \\
0 \ar[r] & \Bbbk\ar[r]\ar[u] & \Bbbk \ar[r]\ar[u] & 0 \\
0 \ar[r] & \Bbbk\ar[r]\ar[u] &\Bbbk \ar[r]\ar[u] & 0.\\
 & 0\ar[u] & 0 \ar[u] &
}
\end{gathered}
\]
\item[(3)]
The bicomplex $Z^{i,j}_{\rightarrow,l}$ has the top leftmost term
in bidegree $(i,j)$ and goes zigzag to the right and down. The number $l\in \N$ denotes
the number of nonzero arrows, $l+1$ is the dimension of the
vector space underlying this bicomplex
\[
Z^{i,j}_{\rightarrow,l}\colon \
\begin{gathered}
\xymatrix{
 \Bbbk \ar[r] & \Bbbk & & \\
 & \Bbbk\ar[u] \ar[r] & \Bbbk & \\
 & & \ddots \ar[r]\ar[u] & \Bbbk\\
 & & & \Bbbk. \ar[u]
}
\end{gathered}
\]
\item[(4)] Likewise, the bicomplex $Z^{i,j}_{\uparrow,l}$ starts from the bidegree $(i,j)$ and goes zigzag down and to the right.
 \[
Z^{i,j}_{\uparrow,l}\colon \
\begin{gathered}
\xymatrix{
 \Bbbk & & & \\
 \Bbbk\ar[u] \ar[r] & \Bbbk & & \\
 & \ddots \ar[r]\ar[u] & \Bbbk &\\
 & & \Bbbk \ar[u] \ar[r] & \Bbbk.
}
\end{gathered}
\]

\end{enumerate}
\end{Example}

\begin{thm} \label{thmclassbicomplex}
Any bounded bicomplex $M\in \mc{M}_2$ $($possibly with infinite-dimensional
vector spaces $M^{i,j})$ breaks up into a direct sum of indecomposable bicomplexes
$S^{i,j}$, $P^{i,j}$, $Z^{i,j}_{\leftarrow,l}$, $Z^{i,j}_{\uparrow,l}$ described above.
\end{thm}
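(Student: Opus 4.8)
The plan is to reduce the two-variable problem to the one-variable decomposition recalled above, run it once for each differential, and organize the outcome as the degeneration of the spectral sequence of $M$. The guiding principle is that each listed indecomposable is the record of a single thread of the two differentials: a dot $S^{i,j}$ is a permanent cocycle, a square $P^{i,j}$ is a closed commuting cell on which both $\partial_1$ and $\partial_2$ are isomorphisms, and a zigzag $Z^{i,j}_{\rightarrow,l}$ or $Z^{i,j}_{\uparrow,l}$ is an open staircase recording a higher differential of the spectral sequence. Concretely, I would fix one differential, say $\partial_2$, set $E_0 = M$ with $d_0 = \partial_2$, and build the decomposition by choosing bases of every page $E_r$ adapted to $d_r$ (over a field every linear map carries some basis to basis vectors or $0$), then lifting these coherently to a single basis of $M$ whose \emph{strings} --- maximal chains of basis vectors linked successively by $\partial_1$ or $\partial_2$ --- are exactly sub-bicomplexes isomorphic to the four listed shapes.

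The key steps, in order, are: (i) recall from the one-variable case that a single-differential complex of $\Bbbk$-vector spaces splits (non-canonically) into simple pieces $S$ and contractible pieces $P$; applied rowwise this is the passage from $E_0$ to $E_1$. (ii) Observe that the naive hope --- splitting off the $\partial_1$-contractible part of $M$ as a direct summand bicomplex --- \emph{fails}: in $Z^{i,j}_{\rightarrow,2}$ the horizontal edge is $\partial_1$-contractible, yet it is not a direct summand, because the remaining vertex maps into it under $\partial_2$. This entanglement of the contractible part with the surviving cohomology, through the \emph{other} differential, is exactly what creates long zigzags, so the proof must encode it rather than avoid it. (iii) Pass therefore to the spectral sequence and choose, page by page, complements to the kernels and images of each $d_r$; each nonzero $d_r$-arrow cancels a pair of classes and, lifted back to $M$, is the record of a zigzag, the number of pages a generator participates in fixing the length $l$; classes surviving to $E_\infty$ account for the dots $S^{i,j}$; and the cells killed immediately by $d_0$ together with the commuting squares yield the $Z^{i,j}_{\uparrow,1}$'s and $P^{i,j}$'s. (iv) Read off the strings and check directly that each is isomorphic to one of $S^{i,j}$, $P^{i,j}$, $Z^{i,j}_{\rightarrow,l}$, $Z^{i,j}_{\uparrow,l}$, that these are pairwise non-isomorphic and indecomposable, and that together they exhaust $M$. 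An equivalent packaging is a Zorn-type argument: take a maximal internal direct sum of subobjects isomorphic to listed indecomposables and prove a splitting lemma, that any bicomplex carrying a nonzero differential component contains such a summand; maximality then forces the complement to have all differentials zero, hence to be a sum of dots, so it must vanish.

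The hard part will be step (iii): ensuring that the successive non-canonical choices of complements on different pages are mutually compatible, so that page-$r$ classes lift to honest direct-summand sub-bicomplexes of $M$ rather than merely to subquotients. This is the concrete form of the entanglement flagged in step (ii), and it is where a careful change of basis --- Gaussian elimination on the two differentials simultaneously --- is needed. Two further points must be handled. First, boundedness of the support guarantees that the spectral sequence collapses after finitely many pages, so the reduction terminates; without boundedness, arbitrarily long zigzags could persist and no finite stage would suffice. Second, since the individual spaces $M^{i,j}$ may be infinite-dimensional, I would phrase every splitting purely as ``choose a complement to a subspace,'' which exists for any vector space; the resulting decomposition is then a possibly infinite direct sum of the finite-dimensional listed indecomposables, and the Zorn formulation makes the passage to infinitely many summands rigorous.
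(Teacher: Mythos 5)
Your overall strategy --- run the spectral sequence page by page, choose complements, and lift the resulting classes to sub-bicomplexes --- correctly locates the difficulty but does not resolve it, and the unresolved step is essentially the entire content of the theorem. You acknowledge in step (iii) that the non-canonical choices made on successive pages must be mutually compatible so that page-$r$ classes lift to honest direct summands of $M$ rather than to subquotients, and you defer this to ``a careful change of basis --- Gaussian elimination on the two differentials simultaneously.'' That deferral is the gap: once Theorem~\ref{thmclassbicomplex} is known, the compatibility of the pages is automatic (this is exactly what Section~\ref{subsec-bicomplexes} derives \emph{from} the theorem), but proving the compatibility directly is no easier than proving the theorem itself. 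Your Zorn packaging has the same problem in another guise: a maximal internal direct sum of subobjects, each isomorphic to a listed indecomposable, need not be a direct summand of $M$ (an increasing union of direct summands of an infinite-dimensional module over $\Lambda_2$ need not split off), so ``maximality forces the complement to have all differentials zero'' does not follow without the very splitting lemma you postpone. Note also that a single spectral sequence, say the one beginning with $\dif_2$, cannot distinguish all the shapes: $P^{i,j}$ and $Z^{i,j}_{\uparrow,l}$ with $l$ odd both have vanishing $\dif_2$-cohomology, so your page-by-page bookkeeping would have to run both spectral sequences at once, compounding the lifting problem.

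The paper sidesteps the lifting problem entirely by two structural observations. First, $\Lambda_2$ is a graded Frobenius algebra, so graded projectives are injective; hence whenever $\dif_1\dif_2(m)\neq 0$ the free submodule generated by $m$ is injective and splits off, and one strips away all $P^{i,j}$ summands at once, leaving a module $N$ annihilated by $\dif_1\dif_2$. Second, writing each $N^{i,j}=D^{i,j}\oplus C^{i,j}$ with $D^{i,j}=\ker\dif_1\cap\ker\dif_2\cap N^{i,j}$, both differentials carry $C$ into $D$, so each anti-diagonal of $N$ becomes a representation of a type $A$ quiver with alternating orientation, whose indecomposables are the interval modules by Gabriel's classification; translating back yields exactly $S^{i,j}$ and the zigzags. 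If you want to salvage your approach, the Frobenius/injectivity observation is the missing idea that makes the first splitting work, and the quiver-theoretic reduction replaces all of your page-by-page basis choices with a single known classification.
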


We will postpone the proof of the theorem until Section~\ref{subsecproof}.

Let $\Tot(M)$ be the total complex of the bicomplex $M$, with the
differential $d=\dif_1+\dif_2$ and the terms given by direct sums over
$M^{i,j}$ for $i+j$ fixed,
\begin{gather*}
 \cdots \stackrel{d}{\lra}
\Tot^k(M) \stackrel{d}{\lra} \Tot^{k+1}(M) \stackrel{d}{\lra} \cdots, \end{gather*}
where
$\Tot^k(M) = \oplusop{i+j=k} M^{i,j}$.

A common situation is that we want to compute the homology
of $\Tot(M)$ with respect to the differential $d$ and already
know the homology of $M$ with respect to, say, differential $\dif_2$
(the upward differential in our conventions).
These homology groups $\mH(M,\dif_2)$ are bigraded,
\begin{gather*} \mH(M,\dif_2) = \oplusop{i,j\in \Z} \mH^{i,j}(M,\dif_2) ,\end{gather*}
and we would like to understand the relation between them and
$\mH(\Tot(M),d)$.
If we write $M$ as a (possibly infinite) direct sum of indecomposable
bicomplexes $M_{\alpha}$, for $\alpha$ in some index set $A$,
then both $\mH(M,\dif_2)$ and $\mH(\Tot(M),d)$ decompose as direct sums
of cohomology groups of $M_{\alpha}$:
\begin{gather*}
 \mH(M,\dif_2) \cong \oplusop{\alpha\in A} \mH(M_{\alpha}, \dif_2), \\
 \mH(\Tot(M),d) \cong \oplusop{\alpha\in A} \mH(\Tot(M_{\alpha}), d).
\end{gather*}
Hence, we will compare $\mH(M,\dif_2)$ and $\mH(\Tot(M),d)$ for all types of indecomposable summands of $M$, case by case.

{\bf Case 1.} $S^{i,j}$ contributes a copy of $\Bbbk$ to $\mH^{i,j}(M,\dif_2)$
and a copy of $\Bbbk$ to $\mH^{i+j}(\Tot(M),d)$.

{\bf Case 2.} $\mH(P^{i,j},\dif_2)=0$ and $\mH(\Tot(P^{i,j}),d)=0$. Thus,
the ``square'' indecomposable bicomplex $P^{i,j}$ contributes nothing to both $\mH(M,\dif_2)$ and $\mH(\Tot(M),d)$.

For the module $Z^{i,j}_{\uparrow,l}$, there are two sub-cases.

{\bf Case 3.a.}
Firstly, let $l$, the number of nonzero arrows in the zigzag, be odd in $Z^{i,j}_{\uparrow,l}$,
 \[
Z^{i,j}_{\uparrow,l}\colon \
\begin{gathered}
\xymatrix{
 \Bbbk & & \\
 \Bbbk\ar[u] \ar[r] & \Bbbk & & \\
 & \ddots \ar[r]\ar[u] & \Bbbk \\
 & & \Bbbk. \ar[u]
}
\end{gathered}
\]
Cohomology of $Z^{i,j}_{\uparrow,l}$ with respect to the vertical
differential $\dif_2$ is zero. The total complex of this zigzag
has the form
\begin{gather*}0 \lra \Bbbk^r \stackrel{d}{\lra} \Bbbk^r \lra 0, \end{gather*}
where $d$ is an isomorphism and $2r = l+1$. Hence, cohomology
of the total complex is zero as well.

{\bf Case 3.b.} Suppose now that $l$ in $Z^{i,j}_{\uparrow,l}$ is even, $l=2r$,
 \[
Z^{i,j}_{\uparrow,l}\colon \
\begin{gathered}
\xymatrix{
 \Bbbk & && \\
 \Bbbk\ar[u] \ar[r] & \Bbbk & && \\
 & \ddots \ar[r]\ar[u] & \Bbbk & \\
 & & \Bbbk \ar[u] \ar[r] & \Bbbk.
}
\end{gathered}
\]
Cohomology with respect to $\dif_2$ produce a single $\Bbbk$ in
bidegree $(i+r,j-r)$. The total complex has the form
\begin{gather*} 0 \lra \Bbbk^r \stackrel{d}{\lra} \Bbbk^{r+1} \lra 0 \end{gather*}
with $d$ injective. Cohomology of the total complex is $\Bbbk$
in degree $i+j$ and zero elsewhere.

{\bf Case 4.a.}
For the module $Z^{i,j}_{\rightarrow,l}$, there are two sub-cases as well. We start with even $l=2r$,
\[
Z^{i,j}_{\rightarrow,l} \quad (l=2r)\colon \
\begin{gathered}
\xymatrix{
 \Bbbk \ar[r] & \Bbbk & & \\
 & \Bbbk\ar[u] \ar[r] & \Bbbk & \\
 & & \ddots \ar[r]\ar[u] & \Bbbk\\
 & & & \Bbbk. \ar[u]
}
\end{gathered}
\]
Cohomology with respect to $\dif_2$ give a single $\Bbbk$ in
bidegree $(i,j)$. The total complex is
\begin{gather*} 0 \lra \Bbbk^{r+1} \stackrel{d}{\lra} \Bbbk^r \lra 0\end{gather*}
with a surjective $d$, and it has cohomology $\Bbbk$
in degree $i+j$ and zero elsewhere.

Before we treat the last case, observe that in each of the above
cases cohomology of the total complex is given by simply collapsing
the bigrading of $\mH(M,\dif_2)$ into a single grading by adding~$i$ and~$j$. Thus, if $M$ does not contain any direct summands
isomorphic to $Z^{i,j}_{\rightarrow,l}$ with odd $l$,
\begin{gather*} \mH^k(\Tot(M), d) = \bigoplus_{i+j=k} \mH^{i,j}(M,\dif_2).\end{gather*}

{\bf Case 4.b.} Lastly, consider $Z^{i,j}_{\rightarrow,l}$ with odd $l=2r+1$,
\[
Z^{i,j}_{\rightarrow,l} \quad (l=2r+1)\colon \
\begin{gathered}
\xymatrix{
 \Bbbk \ar[r] & \Bbbk & && \\
 & \Bbbk\ar[u] \ar[r] & \Bbbk && \\
 & & \ddots \ar[r]\ar[u] & \Bbbk &\\
 & & & \Bbbk \ar[u] \ar[r]& \Bbbk.
}
\end{gathered}
\]
Taking cohomology with respect to $\dif_2$ produces two
copies of $\Bbbk$, in bigradings that differ by $(r,1-r)$:
\[
\mH(Z^{i,j}_{\rightarrow,l},\dif_2)=
\begin{gathered}
\xymatrix{
 \Bbbk \ar[r] & 0 & && \\
 & 0\ar[u] \ar[r] & 0 && \\
 & & \ddots \ar[r]\ar[u] & 0 &\\
 & & & 0 \ar[u] \ar[r]& \Bbbk.
}
\end{gathered}
\]
Collapsing the bigrading in cohomology gives us two copies of $\Bbbk$ in adjacent degrees $i+j$ and $i+j+1$.

The total complex has the form
\begin{gather*} 0 \lra \Bbbk^{r+1}\stackrel{d}{\lra} \Bbbk^{r+1}\lra 0 \end{gather*}
with $d$ an isomorphism, and the cohomology of the total complex
is zero.
Thus, for a general bounded bicomplex~$M$, the cohomology
$\mH(M,\dif_2)$, after the bigrading collapsed into a single
grading, is isomorphic to the cohomology of the total
complex of~$M$, plus pairs of copies of the ground field in adjacent degrees
$(i+j,i+j+1)$, for each direct summand of $M$ isomorphic to~$Z^{i,j}_{\rightarrow,l}$ with odd~$l$.

Since we want to know the cohomology of the total complex,
the extraneous terms need to be eliminated. Ideally, we would
locate all direct summands $Z^{i,j}_{\rightarrow,2r+1}$
and kill off pairs of $\Bbbk$, one for each summand, in the relative
bigrading position $(r,1-r)$. For a general $r$, we need to
eliminate pairs in the relative positions $(i,j)$ and $(i+r,j-r+1)$ by a map $d_r^{i,j}$:
\[
\begin{gathered}
\xymatrix{
 \Bbbk \ar[rrrrddd]^{d_r^{i,j}=1} & 0 & && \\
 & 0 & 0 && \\
 & & \ddots & 0 &\\
 & & & 0 & \Bbbk
}
\end{gathered}
\]
on the square lattice. This is exactly what the spectral sequence does.
The $E_1$-term of the spectral sequence of the bicomplex $(M,\dif_1,\dif_2)$
is the cohomology of $M$ with respect to $\dif_2$:
\begin{gather*}E_1^{i,j} = \mH^{i,j}(M,\dif_2).\end{gather*}
To pass to the $E_2$-term, we remove contributions to $\mH(M,\dif_2)$ from the
direct summands $Z^{i,j}_{\rightarrow, 1}$, which are
$\Bbbk\stackrel{1}{\rightarrow} \Bbbk$.
Notice that the $E_2$-term is simply the cohomology of $\mH(M,\dif_2)$ with
respect to the differential $\dif_1$ (more accurately, differential $\dif_1$ on $M$
descends to a differential on $\mH(M,\dif_2)$, which we also call $\dif_1$):
\begin{gather*} E_2 = \mH(\mH(M,\dif_2), \dif_1).\end{gather*}
Going from $E_2$ to the $E_3$-term, we remove pairs of one-dimensional vector
spaces $\Bbbk$
which come from summands $Z^{i,j}_{\rightarrow, 3}$ and differ by $(2,-1)$-bigrading.
In general, in the $E_r$-term there are no contributions from
summands $Z^{i,j}_{\rightarrow, l}$ for all odd $l\leq 2r-1$.

The reader can find an accurate definition of spaces $E_r^{i,j}$
and differentials $d_r^{i,j}$ in almost any textbook on homological algebra,
often done in a slightly different framework of a filtered complex rather than
a bicomplex. However, we find the above approach via indecomposable
bicomplexes more clarifying and intuitive than the standard textbook definition of the pages
$E_r$ and differentials $d_r^{i,j}$ of a spectral sequence.

\subsection{Bicomplexes and Hodge theory}
The Hodge bicomplex~\cite{DGMS, GH, W}.
Let $X$ be a closed almost complex manifold. This means $X$ is a smooth closed manifold equipped with an endomorphism $J$ of its real tangent bundle $T_\R(X)$ such that $J^2=-1$.
The complexified tangent bundle $T(X)=T_\R(X)\otimes_{\R}\C$ of
$X$ decomposes into the direct sum of $i$ and $-i$ eigenspaces of $J$,
\begin{gather*} T(X) = T^{1,0}(X) \oplus T^{0,1}(X).\end{gather*}
This induces a direct sum decomposition of all exterior powers $\wedge ^kT^{\ast}$
of the complexified cotangent bundle $T^{\ast}(X)_c$:
\begin{gather*} \wedge^kT^{\ast} = \bigoplus_{i+j=k} \wedge^{i,j}T^{\ast} .\end{gather*}
Let $\Omega_{\C}^k(X)$ be the space of smooth sections of $\wedge^kT^{\ast}$
and $(\Omega_{\C}(X),d)$ the complex with $d$ the complexified de Rham differential:
\begin{gather*}\cdots \stackrel{d}{\lra} \Omega_{\C}^k(X) \stackrel{d}{\lra} \Omega_{\C}^{k+1}(X)
\stackrel{d}{\lra}\cdots. \end{gather*}
Let $\Omega^{i,j}(X)$ be the vector space of smooth sections of
$\wedge^{i,j}T^{\ast}$.
In general, $d$ shows no respect for the direct sum decomposition
\begin{gather*} \Omega_{\C}^k(X) = \oplusop{i+j=k} \Omega^{i,j}(X).\end{gather*}
However, Newlander and Nirenberg proved~\cite{NN} that $d$ takes
$\Omega^{i,j}(X)$ to $\Omega^{i+1,j}(X)\oplus \Omega^{i,j+1}(X)$
for all $i,j$ if and only if the almost complex structure $J$ of $X$ comes from a
complex structure on $X$. In this case $d= \partial + \bar{\partial}$, where
\begin{gather*} \partial \colon \  \Omega^{i,j}(X) \lra \Omega^{i+1,j}(X)\end{gather*}
is the composition of $d$ with the projection onto the $(i+1,j)$-component,
and
\begin{gather*} \bar{\partial} \colon \ \Omega^{i,j}(X) \lra \Omega^{i,j+1}(X)\end{gather*}
is the composition of $d$ with the projection onto the $(i,j+1)$-component.
The relation $d^2=0$ splits into the relations
\begin{gather*} \partial^2=0, \qquad \bar{\partial}^2=0, \qquad
\partial \bar{\partial} + \bar{\partial} \partial =0.\end{gather*}
Thus, to a complex manifold $X$ there is assigned
the Hodge bicomplex $\big(\Omega_{\C}(X), \partial, \bar{\partial}\big)$.
Its cohomology groups with respect to $\bar{\partial}$ is known as
the \emph{Dolbeault cohomology}, while the cohomology with respect to
$d= \partial + \bar{\partial}$ is the \emph{de Rham cohomology of $X$}
with coefficients in $\C$. The spectral sequence of this bicomplex,
called the \emph{Hodge to de Rham spectral sequence},
has the Dolbeault cohomology as the $E_1$-term and converges to
the de Rham cohomology of~$X$.

Assume now that $X$ is a K\"ahler manifold. Then the $\partial\bar{\partial}$-lemma
holds.

\begin{lem}
If $\omega\in \Omega_{\C}(X)$ is a $d$-closed form and
either $\partial$-exact or $\bar{\partial}$-exact, then
\begin{gather*}\omega= \partial\bar{\partial} \alpha\end{gather*}
for some $\alpha\in \Omega_{\C}(X)$.
\end{lem}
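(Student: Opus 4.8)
The plan is to prove this by Hodge theory on the compact K\"ahler manifold $X$, with the essential input being the K\"ahler identities. By linearity and the symmetry between $\partial$ and $\bar{\partial}$ under complex conjugation, it suffices to treat the case where $\omega$ is $\partial$-exact; say $\omega = \partial \eta$ for some $\eta \in \Omega_{\C}(X)$. The hypothesis that $\omega$ is $d$-closed then reduces to the single nontrivial condition $\bar{\partial}\omega = 0$, since $\partial\omega = \partial^2\eta = 0$ is automatic.

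First I would invoke the Hodge decomposition for the $\bar{\partial}$-Laplacian $\Delta_{\bar{\partial}} = \bar{\partial}\bar{\partial}^{\ast} + \bar{\partial}^{\ast}\bar{\partial}$ on the compact manifold $X$, writing
\[
\eta = h + \bar{\partial} a + \bar{\partial}^{\ast} b,
\]
with $h$ a $\bar{\partial}$-harmonic form and $a,b \in \Omega_{\C}(X)$. Applying $\partial$ gives $\omega = \partial h + \partial\bar{\partial} a + \partial \bar{\partial}^{\ast} b$. The first term vanishes: by the K\"ahler identity $\Delta_d = 2\Delta_{\partial} = 2\Delta_{\bar{\partial}}$, the three Laplacians share the same kernel, so $h$ is also $\partial$-closed and $\partial h = 0$. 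For the last term I would invoke the K\"ahler commutation relation $\partial\bar{\partial}^{\ast} + \bar{\partial}^{\ast}\partial = 0$ (derived from $[\Lambda,\partial] = i\bar{\partial}^{\ast}$), which rewrites $\partial\bar{\partial}^{\ast} b = -\bar{\partial}^{\ast}\partial b$. Hence $\omega = \partial\bar{\partial} a - \bar{\partial}^{\ast}\partial b$.

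It then remains to show that the stray term $\bar{\partial}^{\ast}\partial b$ vanishes. Using $\bar{\partial}\partial = -\partial\bar{\partial}$ together with $\bar{\partial}^2 = 0$ one checks that $\bar{\partial}(\partial\bar{\partial} a) = 0$, so the closedness hypothesis $\bar{\partial}\omega = 0$ forces $\bar{\partial}\bar{\partial}^{\ast}\partial b = 0$. Pairing against $\partial b$ in the $L^2$ inner product of the K\"ahler metric and moving $\bar{\partial}$ onto its adjoint yields
\[
\big\| \bar{\partial}^{\ast}\partial b \big\|^2 = \big\langle \bar{\partial}\bar{\partial}^{\ast}\partial b, \partial b \big\rangle = 0,
\]
whence $\bar{\partial}^{\ast}\partial b = 0$. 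This leaves $\omega = \partial\bar{\partial} a$, so $\alpha = a$ works. The $\bar{\partial}$-exact case follows from the conjugate argument, producing $\omega = \bar{\partial}\partial a = -\partial\bar{\partial} a$, so that $\alpha = -a$ suffices.

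The main obstacle is concentrated entirely in the K\"ahler identities: both the equality of Laplacians (used to annihilate the harmonic part $\partial h$) and the anticommutation $\partial\bar{\partial}^{\ast} = -\bar{\partial}^{\ast}\partial$ depend crucially on the K\"ahler hypothesis and fail for a general complex manifold. Once these are in hand, the remaining steps are formal consequences of $\bar{\partial}^2 = 0$, $\bar{\partial}\partial = -\partial\bar{\partial}$, the positivity of the $L^2$ inner product, and compactness, the last of which guarantees the Hodge decomposition used at the outset.
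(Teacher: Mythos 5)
Your argument is correct, and it is the standard Hodge-theoretic proof of the $\partial\bar{\partial}$-lemma: every step (the equality of Laplacians, the anticommutation $\partial\bar{\partial}^{\ast}=-\bar{\partial}^{\ast}\partial$, and the $L^2$ pairing that kills the stray term $\bar{\partial}^{\ast}\partial b$) is valid on a compact K\"ahler manifold, and the reduction to the $\partial$-exact case by conjugation is fine. For comparison: the paper does not prove this lemma at all --- it is quoted as the classical $\partial\bar{\partial}$-lemma with references to Deligne--Griffiths--Morgan--Sullivan, Griffiths--Harris, and Wells, and then immediately used to rule out zigzag summands of $\Omega_{\C}(X)$. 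Your proof is essentially the one found in those references, so you have supplied the standard argument that the paper deliberately outsources; the only cosmetic remark is that the Hodge decomposition and the identity $\Delta_d = 2\Delta_{\partial} = 2\Delta_{\bar{\partial}}$ should be understood as applied bidegree by bidegree on $\bigoplus_{p,q}\Omega^{p,q}(X)$, which causes no difficulty since all the operators involved respect the bigrading.
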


Since the lemma is true for $\Omega_{\C}(X)$, it also holds for each
indecomposable summand of~$X$. A~simple examination shows that
the lemma fails for any zigzag $Z^{i,j}_{\rightarrow,l}$ and
$Z^{i,j}_{\uparrow,l}$ for $l>0$ (when $l=0$, the zigzag degenerates
to the simple bicomplex $S^{i,j}$). We obtain immediately the following.

\begin{prop} For a compact K\"ahler manifold $X$, every indecomposable summand of the bicomplex $\Omega_{\C}(X)$ is isomorphic to either~$S^{i,j}$ or $P^{i,j}$ for some~$i$,~$j$.
\end{prop}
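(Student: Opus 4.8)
The plan is to combine the classification of bounded bicomplexes (Theorem~\ref{thmclassbicomplex}) with the $\partial\bar\partial$-lemma stated just above, testing the latter against each of the four indecomposable types. First I would observe that $\Omega_\C(X)$ is a bounded bicomplex: since $X$ is compact of finite real dimension, the total form degree is bounded, so only finitely many bidegrees $(i,j)$ are nonzero (the individual spaces $\Omega^{i,j}(X)$ are of course infinite-dimensional, which is precisely the generality permitted by Theorem~\ref{thmclassbicomplex}). Hence $\Omega_\C(X)$ splits as a direct sum of indecomposable summands of types $S^{i,j}$, $P^{i,j}$, $Z^{i,j}_{\rightarrow,l}$ and $Z^{i,j}_{\uparrow,l}$, and it suffices to rule out the two zigzag families.

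The key reduction is that the conclusion of the $\partial\bar\partial$-lemma passes to direct summands. I would make this precise as follows: in any direct sum decomposition of a bicomplex both $\partial=\dif_1$ and $\bar\partial=\dif_2$ preserve the summands, hence so do $d=\partial+\bar\partial$ and the operator $\partial\bar\partial$. Consequently, for an element $\omega$ lying in a single summand $M_\alpha$, the properties of being $d$-closed, $\partial$-exact, $\bar\partial$-exact, or $\partial\bar\partial$-exact can all be checked inside $M_\alpha$: if $\omega=\partial\eta$ with $\eta\in\Omega_\C(X)$, then replacing $\eta$ by its projection to $M_\alpha$ still exhibits $\omega$ as $\partial$-exact, and similarly for the other operators. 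Thus, if $\omega\in M_\alpha$ is $d$-closed and $\partial$- or $\bar\partial$-exact, the lemma (applied in $\Omega_\C(X)$) gives $\omega=\partial\bar\partial\beta$, and projecting $\beta$ to $M_\alpha$ makes the primitive live in $M_\alpha$ as well. In short, the lemma holds verbatim for every indecomposable summand of $\Omega_\C(X)$.

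It then remains to decide, type by type, when the lemma can hold. For $S^{i,j}$ both differentials vanish, so the only $\partial$- or $\bar\partial$-exact element is $0=\partial\bar\partial 0$ and the lemma is vacuous. For $P^{i,j}=\Lambda_2\{i,j\}$ with generator $g$, a direct computation of $\ker d$ and of the images of $\partial$, $\bar\partial$ and $\partial\bar\partial$ shows that any $d$-closed element which is $\partial$- or $\bar\partial$-exact is forced to be a multiple of $\partial\bar\partial g$, hence $\partial\bar\partial$-exact, so the lemma holds. The crux is the zigzags with $l>0$, where I would exhibit a single offending element. In $Z^{i,j}_{\rightarrow,l}$ the basis vector at the first "sink" of the staircase, namely the image of the initial $\partial$-arrow sitting in bidegree $(i+1,j)$, is $d$-closed and $\partial$-exact; but a $\partial\bar\partial$-primitive for it would have to sit in bidegree $(i,j-1)$, a lattice site never occupied by the zigzag, so no such primitive exists and the lemma fails. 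The mirror-image choice of a sink that is $\bar\partial$-exact but not $\partial\bar\partial$-exact disposes of $Z^{i,j}_{\uparrow,l}$.

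Combining these, no summand of $\Omega_\C(X)$ can be a zigzag with $l>0$, so every indecomposable summand is of type $S^{i,j}$ or $P^{i,j}$, as claimed. I expect the main obstacle to be the bookkeeping in this last "simple examination'': one must correctly locate, in each zigzag, a vertex that is $d$-closed and exact for one of $\partial$, $\bar\partial$ while the bidegree demanded of a $\partial\bar\partial$-primitive falls outside the support of the zigzag. It is exactly this mismatch of bidegrees that makes the lemma fail precisely for $l>0$ and succeed (trivially) at $l=0$, where the zigzag degenerates to $S^{i,j}$.
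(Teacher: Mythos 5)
Your proposal is correct and follows exactly the paper's route: decompose via Theorem~\ref{thmclassbicomplex}, note that the $\partial\bar{\partial}$-lemma is inherited by direct summands, and rule out the zigzags $Z^{i,j}_{\rightarrow,l}$, $Z^{i,j}_{\uparrow,l}$ with $l>0$ by exhibiting a sink that is $d$-closed and $\partial$- or $\bar{\partial}$-exact but whose would-be $\partial\bar{\partial}$-primitive lies outside the support. The only difference is that you carry out in detail the ``simple examination'' (including the check that the lemma does hold on $S^{i,j}$ and $P^{i,j}$) that the paper leaves to the reader, and your bidegree bookkeeping is consistent with the paper's conventions.
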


Equivalently, $\Omega_{\C}(X)$ has no zigzags (including no zigzags of length $1$, that is $\Bbbk \stackrel{1}{\lra} \Bbbk$ and its vertical conterpart).

Thus, the bicomplex $\Omega_{\C}(X)$ decomposes into the direct sum
\begin{gather*}\Omega_{\C}(X)\cong \Omega_s(X) \oplus \Omega_p(X),\end{gather*}
where $\Omega_s(X)$ is a finite-dimensional semisimple bicomplex (a
direct sum of one-dimensional simple bicomplexes $S^{i,j}$), while
$\Omega_p(X)$ is an infinite-dimensional free bicomplex (a direct
sum of free bicomplexes $P^{i,j}$). The first summand is finite-dimensional since $\Omega_{\C}(X)$ has finite-dimensional cohomology groups, and \begin{gather*}\Omega_s(X)\cong \mH(\Omega_{\C}(X), \partial) \cong
\mH(\Omega_{\C}(X),\bar{\partial}) \cong \mH(\Omega_{\C}(X), d) \cong \mH(X,\C).\end{gather*}
The first three terms are bigraded vector spaces, and the second isomorphism
says that, after collapsing the bigrading to a single grading, the groups become
the usual de Rham cohomology groups of $X$.

We see that the cohomology groups of a compact K\"ahler manifold $X$ with respect to $\partial$, $\bar{\partial}$,
and $d$ are isomorphic; they are also isomorphic to the largest semisimple summand
of $\Omega_{\C}(X)$. The Hodge to de Rham spectral sequence for $X$
degenerates at $E_1$ ($E_1=E_{\infty}$). Likewise, the $\partial$ counterpart of the Hodge to de Rham spectral sequence degenerates at
 $E_1=\mH(\Omega_{\C}(X), \partial)$.

\subsection{Proof of Theorem \ref{thmclassbicomplex}}\label{subsecproof}
Let $M$ be a graded module over $\Lambda_2$. Suppose $m\in M$ is a homogeneous vector of bidegree $(i,j)$ such that $\dif_1\dif_2(m)\neq 0$. Then, it is clear that the submodule generated by $m$ and spanned by vectors in the diagram below
\[
\xymatrix{
\dif_2(m)\ar[r]^-{\dif_1} & \dif_2 \dif_1(m)=-\dif_1\dif_2(m)\\
m\ar[r]^-{\dif_1}\ar[u]^-{\dif_2} & \dif_1(m)\ar[u]^-{\dif_2}
}
\]
is isomorphic to $\Lambda_2$, up to a grading shift, and thus is a projective submodule inside $M$.

Recall that over a Frobenius algebra, any projective module is injective and vice versa \cite[Theorem~15.9]{La}. The same proof shows that over a graded Frobenius algebra, in the category of graded modules, projective objects coincide with injective objects.
Since $\Lambda_2$ is graded Frobenius, the submodule above is also graded injective, and therefore must be a direct summand of~$M$. We can decompose
$M\cong P\oplus N$, where $P$ is a graded direct sum of projective-injectives of the form $P^{i,j}$ (case (2) of Example \ref{egindecomposableLambda2mod}), and $N$ is annihilated by the element $\dif_1\dif_2=-\dif_2\dif_1\in \Lambda_2$. Further, we may regard $N$ as a module over the bigraded quotient algebra
\begin{gather*}\widehat{\Lambda}_2:=\dfrac{\Lambda_2}{(\dif_1\dif_2)}\cong \dfrac{\Bbbk[\dif_1,\dif_2]}{\big(\dif_1^2,\dif^2_2,\dif_1\dif_2\big)}.\end{gather*}

Now assume $N$ is a bounded bigraded $\Lambda_2$-module which does not contain any projective-injective summands. By the above discussion, $N$ is a bigraded module over $\widehat{\Lambda}_2^\prime$. Write for each term
\[
N^{i,j}\cong D^{i,j} \oplus C^{i,j},
\]
where
\begin{gather*}D^{i,j}=\operatorname{Ker}(\dif_1)\cap \operatorname{Ker}(\dif_2) \cap N^{i,j}\end{gather*}
is the subspace annihilated by both $\dif_1$ and $\dif_2$, and $C^{i,j}$ is an arbitrary complementary vector subspace to $D^{i,j}$ inside $N^{i,j}$. Necessarily,
\[
\dif_1\big(C^{i,j}\big)\subset D^{i+1,j},\qquad \dif_2\big(C^{i,j}\big)\subset D^{i,j+1}
\]
since $\dif_1\dif_2|_N\equiv 0$. Thus, there are two direct summands of $N$ containing the subspaces~$C^{i,j}$ and~$D^{i,j}$:
\begin{gather*}
\left(
\begin{gathered}
\xymatrix@=1.5em{
\ddots & && \\
 C^{i-1,j+1}\ar[u]^-{\dif_2} \ar[r]^-{\dif_1} & D^{i,j+1} && \\
 & C^{i,j} \ar[r]^-{\dif_1}\ar[u]^{\dif_2} & D^{i+1,j} & \\
 & & C^{i+1,j-1}\ar[u]^-{\dif_2} \ar[r]^-{\dif_1} & \ddots\\
}
\end{gathered}
\right)
 ,\\
\left(
\begin{gathered}
\xymatrix@=1.5em{
\ddots \ar[r]^-{\dif_1} & D^{i-1,j+1} & & \\
 & C^{i,j-1}\ar[r]^-{\dif_1}\ar[u]^-{\dif_2} & D^{i,j} & \\
 & & C^{i,j-1} \ar[u]^-{\dif_2} \ar[r]^-{\dif_1} & D^{i+1,j-1} \\
 & & & \ddots\ar[u]^-{\dif_2} \\
}
\end{gathered}
\right).
\end{gather*}

In particular, if we further assume that $N$ as above is indecomposable, then there must be $(i,j)\in \Z^2$ such that $N$ is isomorphic to one of the above ``zig-zag'' modules, and either $C^{i,j}=N^{i,j}$ or $D^{i,j}=N^{i,j}$. Flattening out the zig-zag, say, the first one, we may identify $N$ with an indecomposable finite-dimensional representation of the $A$ quiver with the alternating orientation
\[
\xymatrix{
\cdots & C^{i-1,j+1}\ar[l]_-{\dif_2} \ar[r]^-{\dif_1} & D^{i,j+1} & C^{i,j}\ar[l]_-{\dif_2}\ar[r]^-{\dif_1} & D^{i+1,j} & C^{i+1,j-1}\ar[l]_-{\dif_2}\ar[r]^-{\dif_1} & \cdots
} .
\]
By the classical result of Gabriel (see, for instance, \cite{Sk}), such an indecomposable module must be of the form
\[
\xymatrix{
\cdots & \Bbbk \ar[l]_-{=} \ar[r]^-{=} & \Bbbk & \Bbbk \ar[l]_-{=}\ar[r]^-{=} & \Bbbk & \Bbbk \ar[l]_-{=}\ar[r]^-{=} & \cdots
} .
\]
Such an indecomposable module translates back into either the simple module or a zig-zag module listed
in Example~\ref{egindecomposableLambda2mod} (cases (1), (3) and (4)). The theorem follows.

\begin{rem}[unbounded complexes] As the proof reveals above, one may extend Theorem~\ref{thmclassbicomplex} to the case of unbounded bicomplexes as well.

{\bf Case 5.}
Initially vertical and bounded from ``below'' or ``above''; the bounded corner sitting in bidegree $(i,j)$:
 \[
 Z^{i,j}_{\uparrow,+}\colon \
\begin{gathered}
\xymatrix{
 \ddots \ar[r] & \Bbbk & & \\
 & \ddots \ar[r]\ar[u] & \Bbbk & \\
 & & \Bbbk \ar[u] \ar[r] & \Bbbk\\
 & & & \Bbbk \ar[u]
}
\end{gathered}
 \qquad
Z^{i,j}_{\uparrow,-}\colon \
\begin{gathered}
\xymatrix{
 \Bbbk & & & \\
 \Bbbk\ar[u] \ar[r] & \Bbbk & & \\
 & \ddots \ar[r]\ar[u] & \Bbbk & \\
 & & \Bbbk \ar[u] \ar[r] & \ddots
}
\end{gathered}
\]
Cohomology spaces of $Z^{i,j}_{\uparrow,\pm}$ with respect to the vertical differential $\dif_2$ are both zero. But the collapsed total complexes, which both have the form
\[
0\lra \Bbbk^\infty \stackrel{d}{\lra} \Bbbk^\infty \lra 0,
\]
have different total cohomologies. It is readily seen that, for $Z^{i,j}_{\uparrow,+}$, the total differential is both injective and surjective. However, for $Z^{i,j}_{\uparrow,-}$, the total differential is injective, but not surjective. The cokernel of $d$ is given by $\Bbbk$ sitting in the bidegree $(i,j)$.

{\bf Case 6.}
The module $Z^{i,j}_{\rightarrow,\pm}$, which starts horizontally and is bounded from below or above, whose bounded corner lies in bidegree $(i,j)$:
\[
Z^{i,j}_{\rightarrow,+}\colon \
\begin{gathered}
\xymatrix{
 \ddots \ar[r] & \Bbbk & & &\\
 & \Bbbk\ar[u] \ar[r] & \Bbbk & & \\
 & & \ddots \ar[r]\ar[u] & \Bbbk &\\
 & & & \Bbbk \ar[u] \ar[r] & \Bbbk
}
\end{gathered}
\qquad
Z^{i,j}_{\rightarrow,-} \colon \
\begin{gathered}
\xymatrix{
 \Bbbk \ar[r] & \Bbbk & & &\\
 & \Bbbk\ar[u] \ar[r] & \Bbbk & & \\
 & & \ddots \ar[r]\ar[u] & \Bbbk &\\
 & & & \Bbbk \ar[u] \ar[r] & \ddots
}
\end{gathered}
\]
Cohomology spaces with respect to $\dif_2$ give a single $\Bbbk$ in
bidegree $(i,j)$. However, the total cohomology of the collapsed complexes
\begin{gather*} 0 \lra \Bbbk^{\infty} \stackrel{d}{\lra} \Bbbk^\infty \lra 0\end{gather*}
behaves differently. For $Z^{i,j}_{\rightarrow,+}$, the total differential is clearly injective, but not surjective. The cohomology classes represented by the vectors $1$ sitting in bidegrees $(i-r,j+r)$, $r\in \N$, are all cohomologous, and their images in the total complex represent the same cohomology class in degree $i+j$. On the other hand, the total differential of $Z^{i,j}_{\rightarrow,-}$ is an isomorphism, and thus there is no total cohomology.

{\bf Case 7.}
The module $Z^{i,j}_{\pm}$, which is unbounded in both directions. The underlined copy of~$\Bbbk$ sits in bidegree $(i,j)$. The modules are taken to be the same up to shifting $(i,j)$ to $(i+r,j-r)$, where $r\in \Z$, and identifying $Z^{i,j}_+$ with $Z_{-}^{i+1,j}$:
\[
Z^{i,j}_{+} \colon \
\begin{gathered}
\xymatrix{
 \ddots \ar[r] & \Bbbk & \\
 & \underline{\Bbbk}\ar[u] \ar[r] & \Bbbk \\
 & & \ddots \ar[u]
}
\end{gathered}
\qquad
Z^{i,j}_{-} \colon \
\begin{gathered}
\xymatrix{
 \ddots & & \\
 \Bbbk\ar[u] \ar[r] & \underline{\Bbbk} & \\
 & \Bbbk \ar[r]\ar[u] & \ddots \\
}
\end{gathered}
\]
Again, the vertical cohomology with respect to $\dif_2$ of $Z^{i,j}_{\pm }$ are both zero. The total cohomology for the collapsed complexes both have one-dimensional cohomology sitting in the cokernel of~$d$. In this case, the spectral sequences will not converge.
\end{rem}

Let us call a bicomplex $M=\oplus_{i,j\in \Z}M^{i,j}$ \emph{bounded from Southeast} when $M^{i,j}=0$ if $i\gg 0$ and $j \ll 0$. A bicomplex $M$ is called \emph{bounded from Northwest} when $M^{i,j}=0$ if $i \ll 0 $ and $j \gg 0$. Combining with the observations in Section~\ref{subsec-bicomplexes}, we see that if a bicomplex $M$ is bounded from Southeast, then, together with finite-dimensional summands, $M$ may contain additional summands of the form $Z^{i,j}_{\uparrow,+}$ and $Z^{i,j}_{\rightarrow,+}$. However, taking $\dif_2$-cohomology first does not create additional classes that need to be killed off in the total cohomology. Similarly, bicomplexes that are bounded from Northwest may contain infinite-dimensional summands of type~$Z^{i,j}_{\uparrow, -}$ and~$Z^{i,j}_{\rightarrow, -}$. Taking $\dif_1$ cohomology contributes nothing towards total cohomology.

\begin{cor}If $M$ is a bicomplex bounded from Southeast, then there is a spectral sequence whose $E_1$ page equals $(\mH(M,\dif_2),\dif_1)$, converging to the total cohomology of~$M$. Likewise, if~$M$ is a bicomplex bounded from Northwest, then there is a spectral sequence starting at $(\mH(M,\dif_1),\dif_2)$ converging to the total cohomology of~$M$.
\end{cor}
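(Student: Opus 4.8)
The plan is to reduce the statement to the classification of indecomposable summands and then read off both the $E_1$-page and the limit of the spectral sequence summand by summand. First I would apply Theorem~\ref{thmclassbicomplex}, in the unbounded form recorded in the preceding remark, to write the Southeast-bounded bicomplex $M$ as a (possibly infinite) direct sum $M \cong \oplusop{\alpha \in A} M_\alpha$ of indecomposables. The sole role of the boundedness hypothesis is to restrict which indecomposable types can appear. I would observe that each of the modules $Z^{i,j}_{\rightarrow,-}$, $Z^{i,j}_{\uparrow,-}$ and the doubly-infinite $Z^{i,j}_{\pm}$ of Case~7 has nonzero terms arbitrarily far into the region $i \gg 0$, $j \ll 0$, so none of them can occur as a summand of a bicomplex with $M^{i,j}=0$ there. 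Hence only the finite-dimensional summands of Example~\ref{egindecomposableLambda2mod}, together with the Northwest-growing modules $Z^{i,j}_{\uparrow,+}$ and $Z^{i,j}_{\rightarrow,+}$, survive.

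Since cohomology commutes with direct sums, $\mH(M,\dif_2)$, $\mH(\Tot(M),d)$, and every page of the spectral sequence decompose as the corresponding sums over $\alpha$, so it suffices to quote the computations already made. By definition $E_1 = \mH(M,\dif_2)$ equipped with the induced differential $\dif_1$, which gives the stated $E_1$-page. For the limit I would run through the surviving types: Cases~1--4 handle the finite summands, Case~5 shows that $Z^{i,j}_{\uparrow,+}$ contributes nothing to either $\mH(M,\dif_2)$ or the total cohomology, and Case~6 shows that $Z^{i,j}_{\rightarrow,+}$ contributes a single $\Bbbk$ to both, in matching total degree $i+j$.

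The heart of the matter is convergence. Having excluded the Case~7 modules and the ``wrong-way'' semi-infinite zigzags $Z^{i,j}_{\rightarrow,-}$, $Z^{i,j}_{\uparrow,-}$, I claim that every remaining summand $M_\alpha$ has a spectral sequence whose $E_\infty$-page is canonically the associated graded of its own total cohomology: the simple modules and the even and $+$ horizontal zigzags contribute permanent cycles, the vertical and $P$-type summands contribute nothing, and each finite odd horizontal zigzag $Z^{i,j}_{\rightarrow,2r+1}$ has its two spurious classes cancelled by $d_{r+1}$ at a finite page. Summing over $\alpha$ then yields $E_\infty \cong \mathrm{gr}\,\mH(\Tot(M),d)$, which is the asserted convergence.

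I expect the convergence bookkeeping to be the main obstacle. Because the differentials $d_r$ all point Southeast, from $(i,j)$ to $(i+r,j-r+1)$, the outgoing differentials at a fixed spot vanish for large $r$ by boundedness, but incoming differentials may originate arbitrarily far to the Northwest, a region boundedness does not control; a single bidegree could therefore be the endpoint of infinitely many long odd zigzags. The cleanest way around this is to certify convergence through the summand-wise identity $E_\infty = \mathrm{gr}\,\mH(\Tot)$ established above, rather than through finite stabilisation at each spot. Finally, the Northwest-bounded case follows by the dual argument: reflecting the lattice across the main diagonal interchanges $\dif_1$ and $\dif_2$, turns the $E_1$-page into $(\mH(M,\dif_1),\dif_2)$, and converts the admissible summands $Z^{i,j}_{\uparrow,+}$, $Z^{i,j}_{\rightarrow,+}$ into their Southeast-growing counterparts.
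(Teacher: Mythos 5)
Your proposal is correct and follows essentially the same route as the paper: decompose $M$ into indecomposables via Theorem~\ref{thmclassbicomplex} and its unbounded extension, note that Southeast-boundedness excludes exactly the summands $Z^{i,j}_{\rightarrow,-}$, $Z^{i,j}_{\uparrow,-}$ and $Z^{i,j}_{\pm}$ whose infinite tails or unkillable extra classes obstruct convergence, and then read off $E_1$ and the limit summand by summand from Cases 1--6. Your extra care about certifying convergence summand-wise (rather than by finite stabilisation at each bidegree) is a sensible sharpening of what the paper leaves implicit, but it is not a different argument.
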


Let us call a complex semibounded if it bounded from either Northwest or Southeast. A semibounded complex cannot contain summands $Z^{i,j}_{+}$ or $Z^{i,j}_{-}$ that prevent either spectral sequence from converging.

\subsection{Connection to zig-zag algebras}\label{subsec-Lambda-2-zig-zag}
Let us point out the connection between the category $\mc{M}_2$ of bicomplexes with the module category over (an infinite version of) the zig-zag algebra considered in~\cite{KhSe}.

Let $Q_\infty$ be the following quiver whose vertices are labelled by $r\in \Z$:
\begin{gather}\label{quiver-Q}
\xymatrix{
 \cdots &
\overset{r-2}{~\circ~} \ltwocell{'}&
 \overset{r-1}{~\circ~}\ltwocell{'}&
 \overset{r}{\circ}\ltwocell{'}&
 \overset{r+1}{~\circ~}\ltwocell{'}&
 \overset{r+2}{~\circ~} \ltwocell{'}&
 \cdots \ltwocell{'}
 }
\end{gather}
Set $\Bbbk Q_\infty$ to be the path algebra associated to $Q_\infty$ over the ground field. We use, for instance, notation $(i|j|k)$, where $i$, $j$, $k$ are vertices of the quiver $Q_\infty$, to denote the path which starts at vertex~$i$, then goes through $j$ (necessarily $j=i\pm 1$) and ends at~$k$. The composition of paths is given by
\begin{gather*}
(i_i|i_2|\cdots|i_r)\cdot (j_1|j_2|\cdots|j_s)=
\begin{cases}
(i_i|i_2|\cdots|i_r|j_2|\cdots|j_s), & \textrm{if $i_r=j_1$,}\\
0, & \textrm{otherwise,}
\end{cases}
\end{gather*}
where $i_1,\dots, i_r$ and $j_1, \dots, j_s$ are sequences of neighboring vertices in $Q_\infty$.

\begin{defn}\label{def-algebra-A} The \emph{zig-zag algebra $A=A_\infty$} is the quotient of the path algebra $\Bbbk Q_\infty$ by the relations, for any $r\in \Z$,
\[
(r|r+1|r+2)=0,\qquad (r|r-1|r-2)=0,\qquad (r|r-1|r)=(r|r+1|r).
\]
\end{defn}

We make the zig-zag algebra graded by setting\footnote{The grading is chosen to match with the convention of \cite{KhSe}.}
\begin{gather*}
 \deg (r)=\deg (r|r+1)=0,\qquad \deg (r|r-1)=1,
\end{gather*}
for all $r\in \Z$. It is a non-unital algebra with a system of mutually orthogonal idempotents $\{(r)|r\in \Z\}$. There is an obvious automorphism~$T$ on~$A$, defined by
\begin{gather*}
 T(r):=(r+1),\qquad T(r|r+1):=(r+1|r+2), \qquad T(r|r-1):=(r+1|r).
\end{gather*}

For a fixed pair of integers $(r,i)\in \Z^2$, there is a graded projective module $ P_r\langle i\rangle$ which is generated by the idempotent $(r)$, whose degree is shifted up by~$ i $. More explicitly,
$P_{r}\langle i\rangle$ is the four-dimensional vector space with the basis
\begin{gather*}
\big\{ (r)\sigma_i,\,(r+1|r)\sigma_i,\, (r-1|r)\sigma_i, \,(r|r+1|r)\sigma_i\big\},
\end{gather*}
where $\sigma_i$ stands for the module generator sitting in degree~$i$.

We will consider the category of graded modules over $A$, which we denote by $\mc{M}(A)$, in what follows. The automorphism $T$ of $A$ induces an autoequivalence $\mc{T}$ of $\mc{M}(A)$, defined by~$\mc{T}:=\big(T^{-1}\big)^*$. Clearly $\mc{T}(P_r\langle i \rangle)= P_{r+1}\langle i \rangle$ holds for all $r,i\in \Z$.

Given a module $M=\oplus_{i,j\in \Z}M^{i,j}$ in $\mc{M}_2$, we place the homogeneous bigraded component of~$M^{i,j}$ at $(i,j)$ in the corresponding node of the two-dimensional lattice~$\Z^2$. For each $r\in \Z$, we collect together~$M^{i,j}$s on the line of slope one (depicted as the dashed line in the picture below):
\begin{gather*}
M_r:=\bigoplus_{i-j=r}M^{i,j}.
\end{gather*}
Note that $M_{r}$ is singly graded, with its homogeneous degree $j$ part $M_r^j$ set to be $M^{r+j,j}$.

Since $\dif_1$ and $\dif_2$ have bidegrees $(1,0)$ and $(0,1)$, respectively, they induce maps
\[
D_1:=\dif_1\colon \ M_{r}^j\lra M_{r+1}^j ,\qquad D_2:=(-1)^r\dif_2\colon \ M_{r}^j\lra M_{r-1}^{j+1},
\]
These maps satisfy $D_1^2=0$, $D_2^2=0$ and $D_1D_2=D_2D_1$.
We put the vector space $M_{r}$ at the $r$th vertex of $A$ and declare the rightward (resp.~leftward) going arrows to be the induced map $D_1$ (resp.~$D_2$).
We have thus obtained a graded $A$-module by summing over the $r$-degrees $M_\infty:=\oplus_{r\in \Z}M_{r}$.

Schematically, we depict the correspondence as follows:
\begin{gather*}
\begin{split}& \includegraphics[scale=0.79]{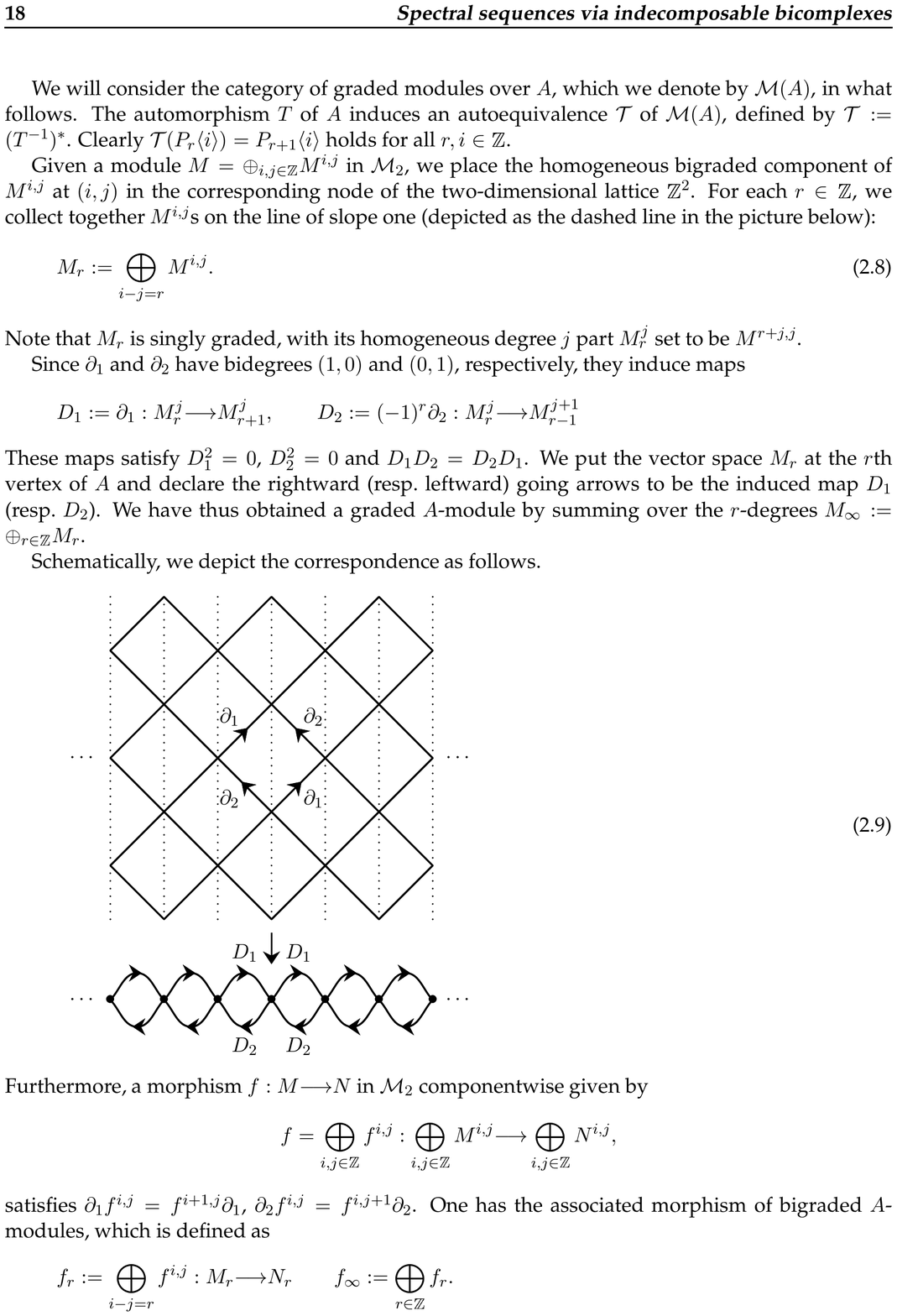}\end{split}
\end{gather*}
Furthermore, a morphism $f\colon M\lra N$ in $\mc{M}_2$ componentwise given by
\begin{gather*}f=\bigoplus_{i,j\in \Z}f^{i,j}\colon \ \bigoplus_{i,j\in \Z}M^{i,j}\lra \bigoplus_{i,j\in \Z}N^{i,j},\end{gather*}
satisfies
\[
\dif_1 f^{i,j}=f^{i+1,j}\dif_1, \qquad \dif_2 f^{i,j}=f^{i,j+1}\dif_2.
\]
One has the associated morphism of bigraded $A$-modules, which is defined as
\[
f_r:=\bigoplus_{i-j=r} f^{i,j} \colon \ M_r\lra N_r, \qquad f_\infty:=\bigoplus_{r\in \Z}f_r.
\]
Clearly $D_1f_r=f_{r+1}D_1$ and $D_2 f_r=f_{r-1}D_2$ holds for all $r\in \Z$, so that $f_\infty$ is a morphism of bigraded $A$-modules. This defines a functor $\mc{F}_\infty\colon \mc{M}_2\lra \mc{M}(A)$.

As the above functorial assignment is clearly reversible, the functor $\mc{F}_\infty$ is invertible.

\begin{prop}\label{prop-abelian-equivalence}
The functor $\mc{F}_\infty\colon \mc{M}_2\lra \mc{M}(A)$ is an equivalence of abelian categories. Furthermore, the functor satisfies
 \begin{gather*}\mc{F}_\infty (M\{1,0\})= \mc{T}(\mc{F}_\infty(M)), \\
 \mc{F}_\infty(M\{0,1\})=\mc{T}^{-1}(\mc{F}_{\infty}(M))\langle 1 \rangle
 .\end{gather*}
\end{prop}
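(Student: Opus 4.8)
The plan is to take seriously the remark that the assignment defining $\mc{F}_\infty$ is reversible, turn it into an explicit inverse functor, and then verify the two shift identities by a careful index chase. First I would write down the candidate inverse $\mc{G}_\infty\colon \mc{M}(A)\lra \mc{M}_2$: given a graded $A$-module $N=\oplus_{r\in\Z}N_r$ with each $N_r$ singly graded by $N_r^j$, set $M^{i,j}:=N_{i-j}^j$ and recover the two differentials of the bicomplex by $\dif_1:=D_1$ and $\dif_2:=(-1)^{i-j}D_2$, undoing the sign $(-1)^r$ built into the definition of $D_2$. The relations $\dif_1^2=0$, $\dif_2^2=0$, $\dif_1\dif_2+\dif_2\dif_1=0$ then follow from the defining relations of $A$: the relations $(r|r+1|r+2)=0$ and $(r|r-1|r-2)=0$ give $\dif_1^2=0$ and $\dif_2^2=0$, while the zig-zag relation $(r|r-1|r)=(r|r+1|r)$, i.e.\ $D_1D_2=D_2D_1$, translates after inserting the signs $(-1)^{i-j}$ into the anticommutation $\dif_1\dif_2+\dif_2\dif_1=0$. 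This is the same sign computation, read backwards, that makes $\mc{F}_\infty$ land in $\mc{M}(A)$ in the first place. A direct check on objects and morphisms gives $\mc{G}_\infty\circ\mc{F}_\infty=\Id$ and $\mc{F}_\infty\circ\mc{G}_\infty=\Id$, so $\mc{F}_\infty$ is an isomorphism of categories.

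For the equivalence assertion I would then observe that $\mc{F}_\infty$ is visibly additive, since direct sums of bicomplexes are sent to direct sums of $A$-modules slice by slice, and that an additive isomorphism of categories between two abelian categories is automatically exact: kernels, cokernels and biproducts are computed on the underlying (bi)graded $\Bbbk$-vector spaces, which $\mc{F}_\infty$ merely repackages, and an invertible functor preserves all such limits and colimits. Hence $\mc{F}_\infty$ is an equivalence of abelian categories.

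The substance lies in the two shift formulas, which I would prove by reindexing. Writing $M':=M\{1,0\}$ with $(M')^{i,j}=M^{i-1,j}$, the slice $(M')_r=\oplus_{i-j=r}M^{i-1,j}$ reindexes to $M_{r-1}$, with internal degree $j$ part $M^{r+j-1,j}$; on the other side $\mc{T}=(T^{-1})^{\ast}$ relabels vertex $r-1$ as vertex $r$, so $(\mc{T}\mc{F}_\infty(M))_r=(\mc{F}_\infty M)_{r-1}=M_{r-1}$ with the same internal grading and the same $D_1$ arrows. The analogous computation for $M\{0,1\}$, where $(M\{0,1\})_r$ reindexes to $M_{r+1}$ and the collapsing of the anti-diagonal raises the internal degree by one, produces the vertex matching of $\mc{T}^{-1}(\mc{F}_\infty M)$ together with the single grading shift $\langle 1\rangle$.

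The one genuinely delicate point, and the step I expect to be the main obstacle, is that these vertex identifications do not match the $D_2$ arrows on the nose: because $D_2=(-1)^r\dif_2$ depends on the parity of $r$, and because the shift $\{1,0\}$ (resp.\ $\{0,1\}$) changes $r=i-j$ by $\pm 1$, the $\dif_2$-differentials of $\mc{F}_\infty(M\{1,0\})$ and of $\mc{T}(\mc{F}_\infty M)$ differ by an overall sign at every vertex. I would resolve this by exhibiting the correcting natural isomorphism whose component on $M^{i,j}$ is multiplication by $(-1)^{j}$: this scalar commutes with every $\dif_1$, which preserves $j$, and anticommutes with every $\dif_2$, which raises $j$ by one, hence intertwines the two module structures and supplies the required isomorphism. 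The same $(-1)^{j}$ twist handles the second formula. With this sign-correcting isomorphism in place, both displayed identities hold as natural isomorphisms of functors, completing the proof.
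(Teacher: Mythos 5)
Your proposal is correct and follows the only available route, which is also the paper's: the paper offers no proof beyond the remark that the assignment is ``clearly reversible'' and then states the proposition, so your explicit inverse $M^{i,j}:=N_{i-j}^j$, $\dif_2:=(-1)^{i-j}D_2$ and the exactness observation simply fill in what the paper leaves implicit. The one place you go beyond the paper is the sign analysis in the shift identities: you are right that with the stated conventions the $D_2$-differentials of $\mc{F}_\infty(M\{1,0\})$ and $\mc{T}(\mc{F}_\infty(M))$ differ by an overall sign, so the displayed equalities hold on the nose only after the $(-1)^j$ twist you construct (or under an unstated sign convention for the bigrading shift); since a natural isomorphism is all that is ever used downstream, this refinement is harmless and, if anything, makes the statement more honest.
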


\section{Tricomplexes and braid group actions} \label{sectricomplex}
\subsection{The monoidal category of tricomplexes}
We denote by $\mathbf{i} = (i_1,i_2,i_3)$ an ordered triple of integers, and write
$\mathbf{i}=i_1 e_1 + i_2 e_2 + i_3 e_3$ where
\begin{gather*} e_1=(1,0,0), \qquad e_2=(0,1,0),\qquad e_3=(0,0,1).\end{gather*}
In particular, we write $\mathbf{0}:=(0,0,0)$ as the additive unit element.

Let $\Lambda_3$ be the exterior algebra over $\Bbbk$ with three generators $\partial_1$, $\partial_2$, $\partial_3$:
\begin{gather*}
\partial_j^2 = 0, \qquad j=1,2,3, \qquad
\partial_j \partial_k + \partial_k \partial_j = 0, \qquad j\not= k.
\end{gather*}
We make $\Lambda_3$ a triply-graded $\Bbbk$-algebra, by assigning
degree $e_j$ to $\partial_j$. Let $\mc{M}_3$ be the category of triply-graded
left $\Lambda_3$-modules with respect to tri-degree preserving maps. A module $M$ consists of a collection of $\Bbbk$-vector spaces $M_{\mathbf{i}}$,
\begin{gather*} M = \bigoplus_{\mathbf{i}\in \Z^3} M_{\mathbf{i}},\end{gather*}
together \looseness=1 with linear maps $\partial_j \colon M_{\mathbf{i}} \lra M_{\mathbf{i}+e_j}$ subject
to the exterior algebra relations. It is useful to visualize $M$ as a 3-dimensional
object: the vector space $M_{\mathbf{i}}$ sits in the $\mathbf{i}$ node of a 3-dimen\-sional lattice and the maps~$\partial_j$ go along oriented edges of the lattice. Below is a portion of $M$ depicted:
\[
\xymatrix@!=2.5pc{
M_{(i,j,k+1)} \ar[dr]\ar[rr] && M_{(i,j+1,k+1)} \ar[rd] \ar[rr] && M_{(i,j+2,k+1)} \ar[rd]\\
& M_{(i+1,j,k+1)} \ar[rr] && M_{(i+1,j+1,k+1)} \ar[rr] && M_{(i+1,j+2,k+1)}\\
M_{(i,j,k)}\ar[uu]^{\dif_3} \ar'[r][rr]^(-0.25){\dif_2} \ar[dr]^{\dif_1} && M_{(i,j+1,k)} \ar'[u][uu] \ar[dr] \ar'[r][rr] && M_{(i,j+2,k)} \ar'[u][uu] \ar[dr]\\
 & M_{(i+1,j,k)} \ar[rr] \ar[uu] && M_{(i+1,j+1,k)} \ar[uu] \ar[rr] && M_{(i+1,j+2,k)} \ar[uu]\\
 M_{(i,j,k-1)}\ar[uu] \ar'[r][rr] \ar[dr] && M_{(i,j+1,k-1)} \ar'[u][uu] \ar[dr] \ar'[r][rr] && M_{(i,j+2,k-1)} \ar'[u][uu] \ar[dr]\\
 & M_{(i+1,j,k-1)} \ar[rr] \ar[uu] && M_{(i+1,j+1,k-1)} \ar[uu] \ar[rr] && M_{(i+1,j+2,k-1)} \ar[uu]
}
\]

The grading shift by $\mathbf{i}$, denoted $\{ \mathbf{i} \}$, is an automorphism of $\mc{M}_3$.
Any simple object of $\mc{M}_3$ is isomorphic to $S_{\mathbf{i}}:= \underline{\Bbbk}\{\mathbf{i}\}$
for a unique $\mathbf{i}$. Here $\underline{\Bbbk}$ is a one-dimensional $\Bbbk$-vector
space, in tridegree~$\mathbf{0}$, viewed as a $\Lambda_3$-module with the trivial action of $\partial_1$, $\partial_2$, $\partial_3$.

Any indecomposable projective in $\mc{M}_3$ is isomorphic to
$P_{\mathbf{i}}:= \Lambda_3\{\mathbf{i}\}$, for a unique $\mathbf{i}$. Any projective in $\mc{M}_3$ is
isomorphic to the direct sum of $P_{\mathbf{i}}$'s, possibly with infinite multiplicities.
Since $\Lambda_3$ is a trigraded Frobenius algebra, the $P_{\mathbf{i}}$ are also injective
objects of $\mc{M}_3$. A module $M$ contains $P_{\mathbf{i}}$ as a direct summand (and not
just as a submodule) if and only if $\partial_1 \partial_2 \partial_3 m \not= 0$ for some $m\in M_{\mathbf{i}}$.

Let $Q=\Lambda_3\omega /\Lambda_3 \partial_3 \omega$ be the cyclic module with one generator $\omega$ in tri-degree $\mathbf{0}$ and relation $\partial_3 \omega=0$. We depict $Q$ as a square
\begin{gather*}
\xymatrix{
\Bbbk \dif_2\omega \ar[r]^{\dif_1} & \Bbbk \dif_2\dif_1\omega\\
\Bbbk \omega \ar[u]^{\dif_2} \ar[r]^{\dif_1} & \Bbbk\dif_1\omega \ar[u]_{\dif_2}.
}
\end{gather*}
There is a graded isomorphism of modules $Q\cong \Lambda_3/\dif_3\Lambda_3$.

The algebra $\Lambda_3$ is a Hopf algebra in the category of trigraded (super) vector
spaces, where the (super) $\Z/2\Z$-grading is given by reducing $i_1+i_2+i_3$ modulo $2$, and $\Delta(\partial_r) = \partial_r \otimes 1 + 1 \otimes \partial_r $.
Consequently, the tensor product $M\otimes N$ of trigraded $\Lambda_3$-modules is a trigraded $\Lambda_3$-module, with $\partial_r$ acting by
\begin{gather*}\partial_r(m\otimes n) = \partial_r(m) \otimes n + (-1)^{i_r}m \otimes \partial_r(n), \qquad r=1,2,3,\end{gather*}
where $m$ is in degree $(i_1,i_2,i_3)$.

Similarly, there is a trigraded \emph{inner-hom} on $\mc{M}_3$, defined by
\begin{gather*}
 \HOM_\Bbbk(M,N):=\bigoplus_{\mathbf{i}\in \Z^3}\Hom_\Bbbk(M,N\{\mathbf{i}\}),
\end{gather*}
where the right hand side is the direct sum of homogeneous linear maps from $M$ to $N\{\mathbf{i}\}$. The inner hom space carries a natural $\Lambda_3$ action defined by, for any $f\in\Hom_\Bbbk(M,N\{i_1,i_2,i_3\})$
\begin{gather}\label{eqn-d-action-on-morphism}
\dif_r(f)(m)=\dif_r(f(m))-(-1)^{i_r}f(\dif_r(m)), \qquad r=1,2,3.
\end{gather}
The spaces of $\Lambda_3$-invariants under this action consist of morphisms in $\mc{M}_3$ of all degrees:
\begin{gather*}
\HOM_\Bbbk(M,N)^{\Lambda_3}=\bigoplus_{\mathbf{i}\in \Z^3} \Hom_{\mc{M}_3}(M,N\{\mathbf{i}\}).
\end{gather*}

It is useful to regard $\Lambda_2$ and $\Lambda_1$ as certain graded Hopf subalgebras in $\Lambda_3$. To do this, we break the apparent symmetry and define $\Lambda_2$ to be the subalgebra generated by $\dif_1$ and $\dif_2$, while setting $\Lambda_1^\prime$ to be the subalgebra generated by $\dif_3$. The natural algebra inclusions
\begin{gather*}
\iota\colon \ \Lambda_2\hookrightarrow \Lambda_3,\qquad \jmath\colon \ \Lambda_1^\prime \subset \Lambda_3
\end{gather*}
admit retractions
\begin{gather}\label{eqn-augmentations}
\mu\colon \ \Lambda_3\lra \Lambda_2, \qquad \nu\colon \ \Lambda_3\lra \Lambda_1^\prime,
\end{gather}
which are respectively given by setting $\dif_3$ or $\dif_1$, $\dif_2$ to be zero.

Using these subquotient algebras, we define a functor by taking ``partial graded-hom'' with respect to $\Lambda_1'$, as follows. Fix $i$ and $j$ degrees. Given any $M\in \mc{M}_3$, set
\begin{gather*}
M_{i,j}:= \nu^*\left( \bigoplus_{k\in \Z}M_{i,j,k}\right),
\end{gather*}
where in the last term, we only keep the $\Lambda_1^\prime$-module structure on $\oplus_{k} M_{i,j,k}$.
The functor extends naturally to morphisms in $\mc{M}_3$, and has the effect, on objects, of taking the direct sum of $M_{i,j,k}$ over $k\in \Z$. It remembers the $\dif_3$-complex structure inherited from that of~$M$, while making~$\partial_1$,~$\partial_2$ act by~$0$.

\subsection{A braid group action}
In this section, we exhibit a braid group action on the stable category of trigraded $\Lambda_3$-modules.

The tensor product $Q\otimes M_{i,j}$ is an object of $\mc{M}_3$, with
$\partial_1$, $\partial_2$ acting only along $Q$ (since their actions on $M_{i,j}$
are trivial) and $\partial_3$ acting along $M_{i,j}$.

Consider the functor
\begin{gather*}
\mc{U}_r(M) := \bigoplus_{i-j=r} Q\otimes M_{i,j} .
\end{gather*}

Geometrically, we take the plane $P_r=\{(i,j,k)\,|\, i-j=r\}$ in $\Z^3$,
with vector spaces $M_{\mathbf{i}}$ sitting in the nodes, and form four copies
of the plane (the tensor product with~$Q$) related by the differentials $\partial_1 $
and $\partial_2$. The differential $\partial_3$ acts along edges $(\mathbf{i},\mathbf{i}+e_3)$
contained in the plane~$P_r$. We depict the summand $Q\otimes M_{i,j}$ in the next diagram. For a fixed $e_3$-degree $k$, $Q\otimes M_{i,j,k}$ has four copies of $M_{i,j,k}$ sitting in degrees $(i,j,k)$, $(i+1,j,k)$, $(i,j+1,k)$ and $(i+1,j+1,k)$ respectively. They correspond to
\begin{gather*}
\Bbbk\omega\otimes M_{i,j,k},\qquad
\Bbbk\dif_1\omega\otimes M_{i,j,k},\qquad
\Bbbk\dif_2\omega\otimes M_{i,j,k}, \qquad
\Bbbk \dif_2\dif_1\omega \otimes M_{i,j,k}.
\end{gather*}
All maps except for{\samepage
\begin{gather*}\dif_1\colon \ M_{i,j,k}\cong \Bbbk\dif_2\omega \otimes M_{i,j,k}\lra \Bbbk \dif_2\dif_1\otimes M_{i,j,k} \cong M_{i,j,k}\end{gather*}
act as identity maps, which is the negative identity map.}

Now, summing over $k$ and keeping track of the differential $\dif_3$, we obtain the diagram
\begin{gather}\label{eqn-cube-diagram}
\begin{split}& \includegraphics[scale=0.8]{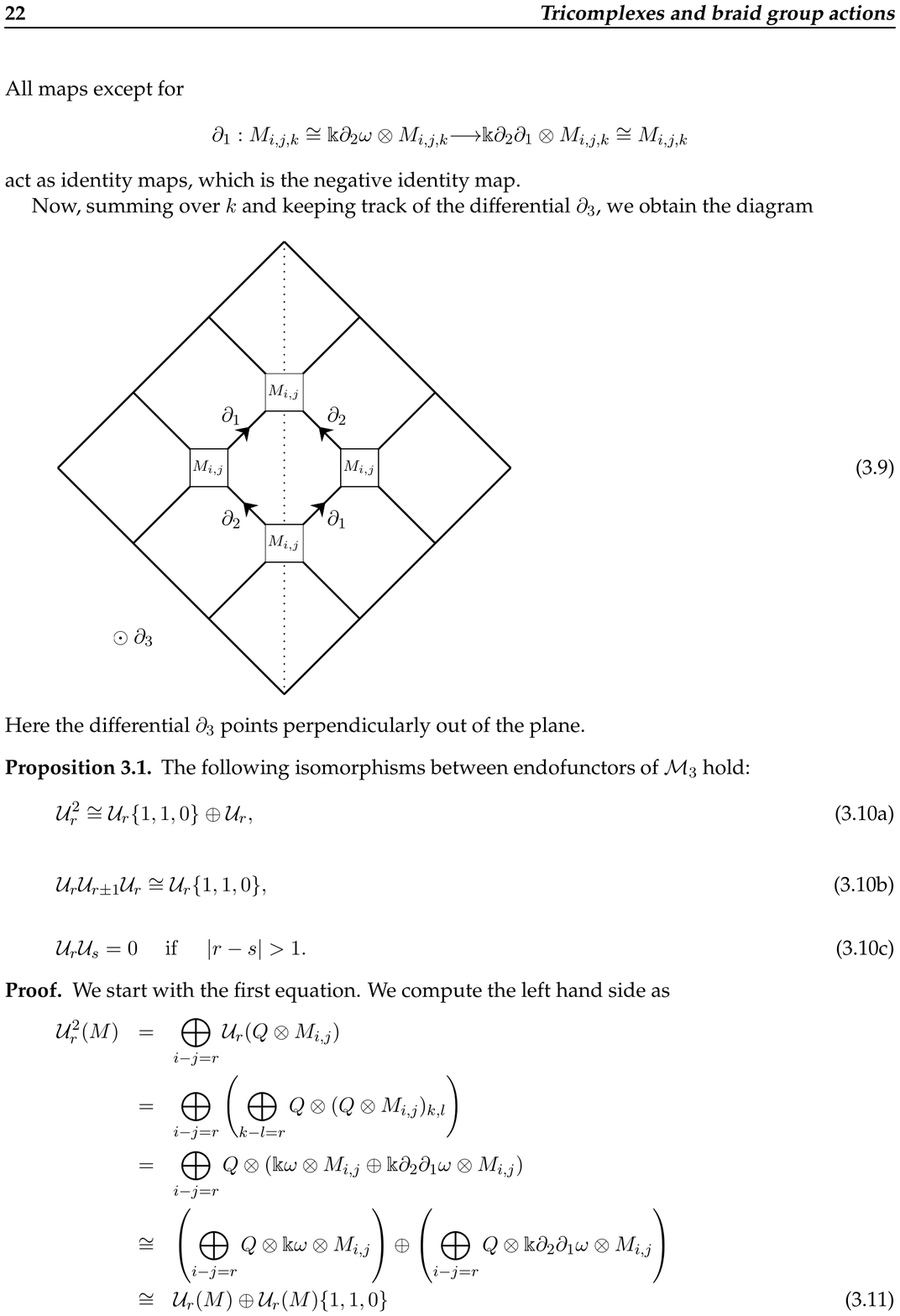}\end{split}
\end{gather}
Here the differential $\dif_3$ points perpendicularly out of the plane.

\begin{prop} \label{prop-TL-action}
The following isomorphisms between endofunctors of $\mc{M}_3$ hold:
\begin{gather*}
 \mc{U}_r^2 \cong \mc{U}_r\{1,1,0\} \oplus \mc{U}_r ,\\
 \mc{U}_r \mc{U}_{r\pm 1} \mc{U}_r \cong \mc{U}_r\{1,1,0\},\\
 \mc{U}_r \mc{U}_s = 0 \qquad \mathrm{if} \quad |r-s|>1.
\end{gather*}
\end{prop}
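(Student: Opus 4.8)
The plan is to reduce all three isomorphisms to a single structural computation: the decomposition, as a $\dif_3$-complex, of the columns of $\mc{U}_s(M)$. Write $N=\mc{U}_s(M)=\bigoplus_{a-b=s}Q\otimes M_{a,b}$, and recall that the column functor $N_{i,j}=\nu^*\big(\bigoplus_{k}N_{i,j,k}\big)$ forgets the $\dif_1$- and $\dif_2$-actions and retains only the $\dif_3$-complex structure. First I would read off the tridegree-$(i,j,k)$ piece of $N$. Since the four basis vectors $\omega,\dif_1\omega,\dif_2\omega,\dif_2\dif_1\omega$ of $Q$ sit in tridegrees $(0,0,0),(1,0,0),(0,1,0),(1,1,0)$, the summand $Q\otimes M_{a,b}$ places a copy of $M_{a,b,k}$ at each of the nodes $(a,b),(a+1,b),(a,b+1),(a+1,b+1)$; imposing $a-b=s$ determines which of these land on a prescribed diagonal $i-j$. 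A short case analysis then gives, as $\dif_3$-complexes placed at node $(i,j)$,
\begin{gather*}
N_{i,j}\cong
\begin{cases}
M_{i,j}\oplus M_{i-1,j-1}, & i-j=s,\\
M_{i-1,j}, & i-j=s+1,\\
M_{i,j-1}, & i-j=s-1,\\
0, & \text{otherwise.}
\end{cases}
\end{gather*}
The one point requiring care here is that $\dif_3$ acts diagonally: since every basis vector of $Q$ has vanishing $e_3$-degree and $\dif_3\omega=0$, one has $\dif_3(q\otimes m)=q\otimes\dif_3 m$, so there are no cross terms between the two summands in the top line, and each is genuinely a copy of some $M_{\bullet,\bullet}$.

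Granting this lemma, the relations reduce to bookkeeping. The vanishing $\mc{U}_r\mc{U}_s=0$ for $|r-s|>1$ is immediate, since $\mc{U}_r(N)$ only uses the columns $N_{i,j}$ with $i-j=r$, and these vanish whenever $r\notin\{s-1,s,s+1\}$. For the square relation I apply the lemma with $s=r$: on the diagonal $i-j=r$ it gives $N_{i,j}\cong M_{i,j}\oplus M_{i-1,j-1}$, so
\begin{gather*}
\mc{U}_r^2(M)=\bigoplus_{i-j=r}Q\otimes N_{i,j}\cong\Big(\bigoplus_{i-j=r}Q\otimes M_{i,j}\Big)\oplus\Big(\bigoplus_{i-j=r}Q\otimes M_{i-1,j-1}\Big).
\end{gather*}
The first summand is $\mc{U}_r(M)$. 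In the second, each $M_{i-1,j-1}$ occupies node $(i,j)$, i.e.\ its natural node $(i-1,j-1)$ shifted by $(1,1,0)$; reindexing $i'=i-1$, $j'=j-1$ identifies it with $\mc{U}_r(M)\{1,1,0\}$, giving $\mc{U}_r^2\cong\mc{U}_r\{1,1,0\}\oplus\mc{U}_r$.

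For the braid-type relation I would iterate the lemma, tracking the shift. In $\mc{U}_{r+1}\mc{U}_r(M)$, the outer $\mc{U}_{r+1}$ sees only the diagonal $i-j=r+1$ of $\mc{U}_r(M)$, where the lemma ($s+1$ case) yields the single column $M_{i-1,j}$ at node $(i,j)$; tensoring with $Q$ and intersecting with the diagonal $i-j=r$ needed for the final $\mc{U}_r$ leaves exactly the $\dif_2\omega$-strand, whose column is $M_{a-1,b-1}$ at node $(a,b)$. Hence $\mc{U}_r\mc{U}_{r+1}\mc{U}_r(M)\cong\bigoplus_{a-b=r}Q\otimes M_{a-1,b-1}$ with each $M_{a-1,b-1}$ occupying node $(a,b)$, and the same reindexing as above produces $\mc{U}_r(M)\{1,1,0\}$; the case $r-1$ is identical after interchanging the roles of the $\dif_1\omega$- and $\dif_2\omega$-strands. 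All of these identifications arise from the canonical direct-sum decomposition of $Q\otimes(-)$ on columns together with reindexing of the summation, both natural in $M$; moreover the full $\Lambda_3$-action on any $\mc{U}$-value is determined by $Q$ (carrying $\dif_1,\dif_2$) tensored with its column (carrying $\dif_3$), so a $\dif_3$-complex isomorphism of columns upgrades to a $\Lambda_3$-module, hence endofunctor, isomorphism.

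The main obstacle is not any single hard step but the disciplined tracking of trigradings and shifts through the iteration, together with the verification that taking columns discards precisely the $\dif_1,\dif_2$-data, so that only the $\dif_3$-complex structure of the intermediate module matters, and that $\dif_3$ remains diagonal across summands. The signs in $Q$—in particular the negative identity on the edge $\dif_2\omega\to\dif_2\dif_1\omega$—affect only the $\dif_1,\dif_2$-structure and are invisible to the column functor, so they do not obstruct the argument.
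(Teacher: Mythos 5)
Your proof is correct and follows essentially the same route as the paper: the ``column lemma'' you isolate (recording which strands of $Q\otimes M_{a,b}$ land on a given diagonal $i-j$) is precisely the observation the paper's proof of Proposition~\ref{prop-TL-action} invokes case by case, and your reindexing producing the $\{1,1,0\}$ shifts matches the paper's identification of the $\dif_2\dif_1\omega$- and $\dif_2\omega\otimes\dif_1\omega$-strands. Stating it once as a uniform lemma is a mild repackaging rather than a different argument.
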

\begin{proof}
We start with the first equation. We compute the left hand side as
\begin{align*}
\mc{U}_r^2(M) & = \bigoplus_{i-j=r} \mc{U}_r(Q\otimes M_{i,j})
 = \bigoplus_{i-j=r} \left(\bigoplus_{k-l=r} Q\otimes (Q\otimes M_{i,j})_{k,l}\right) \nonumber \\
& = \bigoplus_{i-j=r} Q\otimes \left(\Bbbk\omega \otimes M_{i,j}\oplus \Bbbk\dif_2\dif_1\omega\otimes M_{i,j}\right)\nonumber\\
& \cong \left(\bigoplus_{i-j=r} Q\otimes \Bbbk\omega \otimes M_{i,j}\right)\oplus
 \left(\bigoplus_{i-j=r} Q\otimes \Bbbk\dif_2\dif_1\omega \otimes M_{i,j}\right) \nonumber\\
& \cong \mc{U}_r(M)\oplus \mc{U}_r(M)\{1,1,0\}.
\end{align*}
Here, in the third equality, we have used that $Q\otimes M_{i,j}$ has only two terms concentrated on the line $k-l=r$ (see the above picture~\eqref{eqn-cube-diagram}).

For the second isomorphism, we have (taking the $r+1$ case)
\begin{align}
\mc{U}_r\mc{U}_{r+1}\mc{U}_r(M) & = \bigoplus_{i-j=r} \mc{U}_r\mc{U}_{r+1}(Q\otimes M_{i,j})
 = \bigoplus_{i-j=r}\mc{U}_r \left(Q \otimes (\Bbbk\dif_1\omega\otimes M_{i,j})\right)\nonumber\\
& = \bigoplus_{i-j=r} Q\otimes \Bbbk\dif_2\omega \otimes \Bbbk\dif_1\omega \otimes M_{i,j}
 = \bigoplus_{i-j=r} Q\otimes M_{i,j}\{1,1,0\} \nonumber\\
& = \mc{U}_r(M)\{1,1,0\}.\label{eqnproofofTL2}
\end{align}

The last isomorphism is easy, and we leave it as an exercise to the reader.
\end{proof}

\begin{rem}Perhaps the cartoon below, in the scheme of equation~\eqref{eqn-cube-diagram}, helps visualizing the equalities in the above proof. We show this for equation~\eqref{eqnproofofTL2} as an example. Depict a copy of~$M_{i,j}$ by a box in the lattices below. A black dot in a box indicates the term contributing to the functor on the outward arrow:
\begin{gather*}
\includegraphics{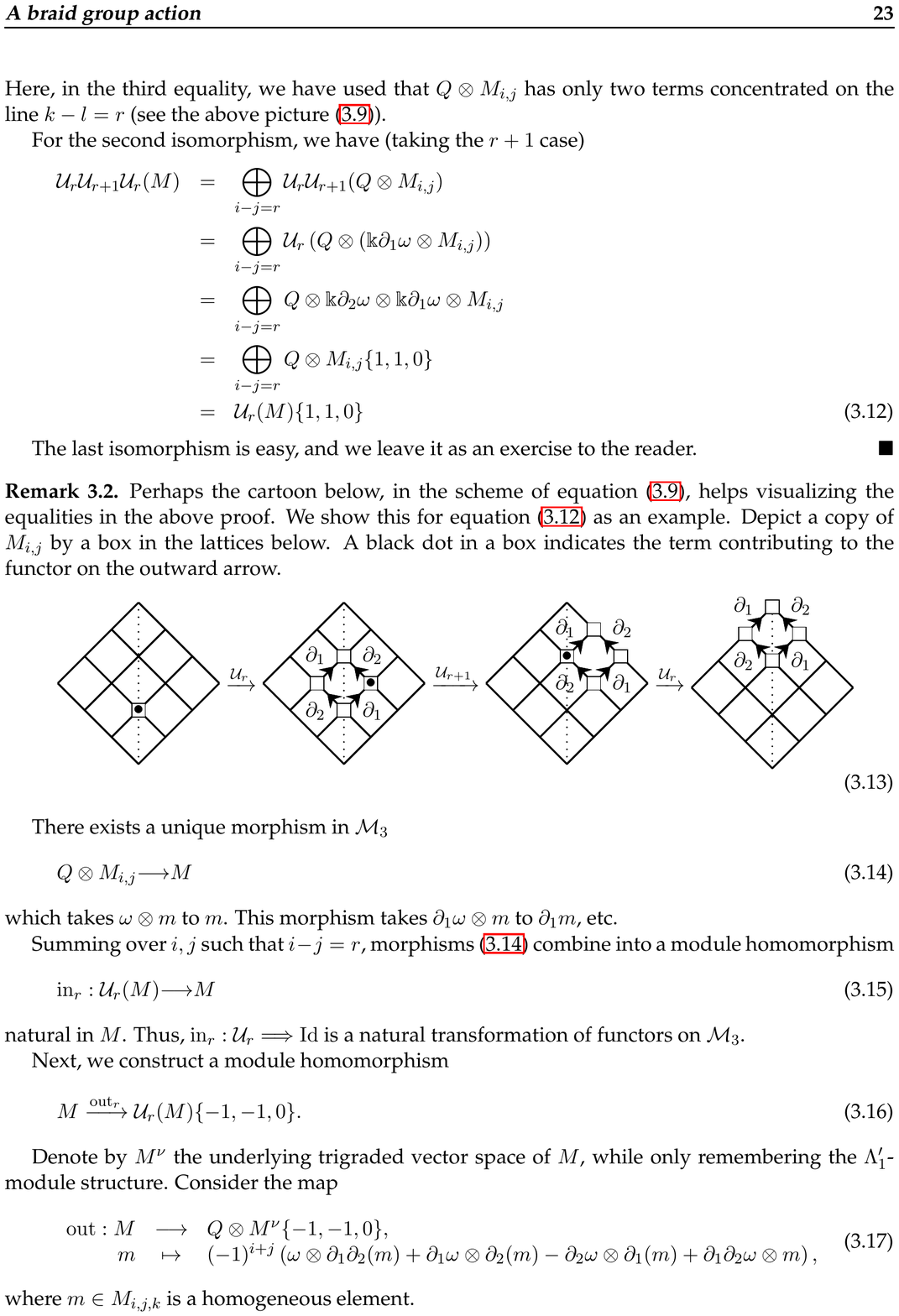}
\end{gather*}
\end{rem}

There exists a unique morphism in $\mc{M}_3$
\begin{gather}\label{mor-1}
 Q \otimes M_{i,j} \lra M,
\end{gather}
which takes $ \omega\otimes m $ to $m$. This morphism takes
$\partial_1 \omega\otimes m$ to $\partial_1 m$, etc.

Summing over $i$, $j$ such that $i-j=r$, morphisms~(\ref{mor-1}) combine into a module homomorphism
\begin{gather*}
\mathrm{in}_r\colon \ \mc{U}_r(M) \lra M
\end{gather*}
natural in $M$. Thus, $\mathrm{in}_r\colon \mc{U}_r \Longrightarrow \mathrm{Id}$ is a natural transformation of functors on~$\mc{M}_3$.

Next, we construct a module homomorphism
\begin{gather*}
 M \xrightarrow{\mathrm{out}_r} \mc{U}_r(M) \{-1,-1,0\} .
\end{gather*}

Denote by $M^\nu$ the underlying trigraded vector space of $M$, while only remembering the $\Lambda_1^\prime$-module structure. Consider the map
\begin{align*}
 \mathrm{out}\colon M & \lra Q\otimes M^{\nu}\{-1,-1,0\}, \\
 m & \mapsto (-1)^{i+j} (\omega \otimes \dif_1\dif_2(m)+\dif_1 \omega \otimes \dif_2(m)- \dif_2 \omega \otimes \dif_1(m)+ \dif_1\dif_2\omega \otimes m ),
\end{align*}
where $m\in M_{i,j,k}$ is a homogeneous element.

\begin{lem}The map $\mathrm{out}\colon M\lra Q\otimes M^\nu \{-1,-1,0\}$ is a morphism of trigraded $\Lambda_3$-modules.
\end{lem}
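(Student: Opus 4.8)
The plan is to verify directly that $\mathrm{out}$ is both tridegree-preserving and $\Lambda_3$-linear, these being exactly the two conditions defining a morphism in $\mc{M}_3$. For the degree count, fix a homogeneous $m\in M_{i,j,k}$: in $Q\otimes M^\nu$ each of the four summands $\omega\otimes\partial_1\partial_2 m$, $\partial_1\omega\otimes\partial_2 m$, $\partial_2\omega\otimes\partial_1 m$, $\partial_1\partial_2\omega\otimes m$ sits in tridegree $(i{+}1,j{+}1,k)$, and the shift $\{-1,-1,0\}$ returns all of them to $(i,j,k)$; hence $\mathrm{out}$ preserves tridegree, and it remains to check $\partial_r\circ\mathrm{out}=\mathrm{out}\circ\partial_r$ for $r=1,2,3$ on such an $m$.

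The key preliminary observation is how $\Lambda_3$ acts on the target. Since $M^\nu$ carries the trivial action of $\partial_1$ and $\partial_2$, these operators act on $Q\otimes M^\nu$ solely through the $Q$-factor; since $\partial_3$ annihilates $Q$ (because $\partial_3\omega=0$ forces $\partial_3=0$ on all of $Q$), $\partial_3$ acts solely through the $M^\nu$-factor, incurring the Koszul sign $(-1)^{|q|}$ governed by the super-parity of the left tensor factor $q\in Q$. The four basis vectors $\omega,\partial_1\omega,\partial_2\omega,\partial_1\partial_2\omega$ have parities $0,1,1,0$, and in the square $Q$ one has $\partial_1\colon\omega\mapsto\partial_1\omega$, $\partial_2\omega\mapsto\partial_1\partial_2\omega$ (killing $\partial_1\omega$ and $\partial_1\partial_2\omega$) and $\partial_2\colon\omega\mapsto\partial_2\omega$, $\partial_1\omega\mapsto\partial_2\partial_1\omega=-\partial_1\partial_2\omega$ (killing $\partial_2\omega$ and $\partial_1\partial_2\omega$).

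With these facts the three identities become short sign computations. For $r=1$ (and symmetrically $r=2$), applying $\partial_1$ to $\mathrm{out}(m)$ kills two of the four terms by $\partial_1^2=0$ on $Q$ and leaves two; on the other side $\mathrm{out}(\partial_1 m)$ carries the opposite prefactor $(-1)^{(i+1)+j}$ and, using $\partial_1\partial_2\partial_1=0$, $\partial_1^2 m=0$ and $\partial_2\partial_1=-\partial_1\partial_2$, reduces to the same two terms; comparing signs (the shifted prefactor against $\partial_2\partial_1=-\partial_1\partial_2$ and against the coefficients of $\mathrm{out}$) shows the two sides agree. For $r=3$, $\partial_3$ acts on the $M^\nu$-factor of all four terms with Koszul signs $(-1)^{|q|}=+,-,-,+$; transporting $\partial_3$ past $\partial_1,\partial_2$ via $\partial_3\partial_a=-\partial_a\partial_3$ introduces the anticommutation signs $+,-,-,+$, which cancel the Koszul signs exactly, while the prefactor $(-1)^{i+j}$ is untouched since $\partial_3$ fixes $i$ and $j$; thus $\partial_3\,\mathrm{out}(m)=\mathrm{out}(\partial_3 m)$.

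The only real difficulty is sign bookkeeping: three independent sign sources must conspire, namely the fixed coefficients $+,+,-,+$ in the definition of $\mathrm{out}$, the degree-dependent prefactor $(-1)^{i+j}$ (which flips under $\partial_1$ or $\partial_2$ but not $\partial_3$), and the Koszul super-signs of the Hopf action on $Q\otimes M^\nu$. The case $r=3$ is the most delicate, being the only one in which the Koszul signs genuinely enter; the coefficients $+,+,-,+$ and the parities $0,1,1,0$ of the $Q$-basis are arranged precisely so that these two sign patterns are mutually inverse. Conceptually, $\mathrm{out}$ is, up to the shift, the unit of the adjunction between restriction to $\Lambda_1'$ and (co)induction along the Frobenius extension $\Lambda_1'\subset\Lambda_3$, under the identification $Q\cong\Lambda_2$; this explains a priori why a canonical $\Lambda_3$-linear map of this shape should exist, although the explicit check above is the most economical route to the statement.
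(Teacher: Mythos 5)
Your proof is correct and follows essentially the same route as the paper's: a direct verification that $\mathrm{out}$ commutes with each $\partial_r$, using that $\partial_1$, $\partial_2$ act only through the $Q$-factor while $\partial_3$ acts only through $M^\nu$ with a Koszul sign. The only difference is one of emphasis --- the paper writes out the $\partial_1$ computation in full and dismisses the $\partial_3$ case as clear, whereas you spell out the cancellation of the Koszul signs $(+,-,-,+)$ coming from the parities of $\omega$, $\partial_1\omega$, $\partial_2\omega$, $\partial_1\partial_2\omega$ against the anticommutation signs, which is exactly the content the paper glosses over.
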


\begin{proof}The map clearly commutes with $\dif_3$-actions on both sides, as $\dif_3$ kills $\omega$ and anti-commutes with $\dif_1$ and $\dif_2$. To verify that $\mathrm{out}$ also commutes with $\dif_1$ and $\dif_2$ requires a small computation. We check, for instance, that it commutes with $\dif_1$, and leave the $\dif_2$-computation to the reader.

On the one hand, if $m\in M_{i,j,k}$, and using that $\dif_1$ acts trivially on $M^\nu$, we have
\begin{gather*}
\dif_1(\mathrm{out}(m)) = (-1)^{i+j} (\dif_1\omega\otimes \dif_1\dif_2(m)-\dif_1\dif_2\omega\otimes \dif_1 (m)).
\end{gather*}
On the other hand,
\begin{align*}
\mathrm{out}(\dif_1(m))& = (-1)^{i+j+1}\left( \dif_1\omega \otimes \dif_2(\dif_1(m))+\dif_1\dif_2\omega\otimes \dif_1(m)\right) \nonumber\\
& = (-1)^{i+j+1}\left( - \dif_1\omega \otimes \dif_1\dif_2(m)+\dif_1\dif_2\omega\otimes \dif_1(m)\right) .
\end{align*}
Comparing these expressions, the commutativity with the $\dif_1$-actions follows.
\end{proof}

Since $Q\otimes M^\nu$ naturally decomposes into a direct sum of $\Lambda_3$-modules
\[
Q\otimes M^\nu\cong \bigoplus_{i,j\in \Z} Q\otimes M_{i,j},
\]
for each $r\in \Z$, we have a natural projection map of $\Lambda_3$-modules
\[
\pi_r\colon \ Q\otimes M\lra \bigoplus_{i-j=r} Q\otimes M_{i,j}.
\]
We can thus define the composition map
\begin{gather*}
\mathrm{out}_r:=\pi_r\circ\mathrm{out}\colon \ M\lra \mc{U}_r(M)\{-1,-1,0\}.
\end{gather*}

Componentwise, $\mathrm{out}_r$ has the effect, for a homogeneous $m\in M_{i,j,k}$,
\begin{gather}\label{eqn-out-componentwise}
\mathrm{out}_r(m):=
\begin{cases}
(-1)^{i+j} (\omega \otimes \dif_1\dif_2 m+ \dif_1\dif_2\omega \otimes m), & \textrm{if}~i-j=r,\\
(-1)^{i+j}\dif_1\omega \otimes \dif_2(m), & \textrm{if}~i-j=r+1, \\
(-1)^{i+j+1} \dif_2\omega \otimes \dif_1(m), & \textrm{if}~i-j=r-1,\\
0, & \textrm{otherwise}.
\end{cases}
\end{gather}
We have thus obtained $\mathrm{out}_r$ as a tri-grading preserving homomorphism of $\Lambda_3$-modules, functorial in~$M$. In other words, similarly as for $\mathrm{in}_r$, the map $\mathrm{out}_r\colon \Id\Rightarrow \mc{U}_r\{-1,-1,0\}$ is a natural transformation of functors.

Let $\mc{SM}_3$ be the stable category of trigraded left $\Lambda_3$-modules.
It has the same objects as $\mc{M}_3$ and the morphisms are those in
$\mc{M}_3$ modulo morphisms that factor through a projective object of $\mc{M}_3$.
In particular, a projective trigraded $\Lambda_3$-module is isomorphic to the
zero object in $\mc{SM}_3$. The stable category is triangulated, with the shift
functor $[1]_{\mc{SM}}$ taking $M$ to the cokernel of an inclusion $M\subset P$, where
$P$ is a projective module. For concreteness, we can choose $P$ to be $\Lambda_3\otimes M
\{-1,-1,-1\}$, with the inclusion taking $m$ to $\partial_1\partial_2\partial_3 \otimes m$.
The shift by $\{-1,-1,-1\}$ makes the inclusion grading-preserving.
Then $M[1]_{\mc{SM}}= \widehat{\Lambda}\otimes M$ where
\begin{gather*}\widehat{\Lambda} = \Lambda_3/(\partial_1\partial_2\partial_3) \{-1,-1,-1\}.\end{gather*}
The cone of a morphism $f\colon M \lra N$ is defined as the cokernel of the inclusion
\begin{gather*} M \subset N \oplus (\Lambda_3 \otimes M\{-1,-1,-1\}),\end{gather*}
which takes $m$ to $(f(m), \partial_1\partial_2\partial_3 (m))$. For more details, we refer the reader to Happel \cite{Ha}. We will need the following result computing morphism spaces in $\mc{SM}_3$, bearing in mind the $\Lambda_3$ action defined in equation~\eqref{eqn-d-action-on-morphism}.

\begin{lem}\label{lem-morphism-in-stable-cat}
Given two objects $M,N\in \mc{SM}_3$, there is an isomorphism
\[
\Hom_{\mc{SM}_3}(M,N)=\dfrac{\Hom_{\mc{M}_3}(M,N)}{\dif_1\dif_2\dif_3 \Hom_{\Bbbk}(M,N\{-1,-1,-1\})}.
\]
\end{lem}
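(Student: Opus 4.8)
The plan is to unwind the definition of the stable category and reduce everything to a single computation with the top exterior element $\partial_1\partial_2\partial_3$. By construction $\Hom_{\mc{SM}_3}(M,N)$ is $\Hom_{\mc{M}_3}(M,N)$ modulo the subspace $P\Hom(M,N)$ of morphisms that factor through a projective object, so the content of the lemma is the identification $P\Hom(M,N)=\partial_1\partial_2\partial_3\,\HOM_\Bbbk(M,N\{-1,-1,-1\})$. First I would record that the right-hand side is indeed contained in $\Hom_{\mc{M}_3}(M,N)$: for any homogeneous $\Bbbk$-linear $g$ of tridegree $(-1,-1,-1)$, the element $\partial_1\partial_2\partial_3(g)$ (the action \eqref{eqn-d-action-on-morphism} applied three times) has tridegree $(0,0,0)$ and is killed by every $\partial_r$, because $\partial_r\partial_1\partial_2\partial_3=0$ in $\Lambda_3$; hence it is a $\Lambda_3$-invariant, i.e.\ an honest degree-zero morphism.

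Next I would characterize $P\Hom(M,N)$ through the single projective--injective object $E(M):=\Lambda_3\otimes M\{-1,-1,-1\}$ together with the monomorphism $j_M\colon M\hookrightarrow E(M)$, $m\mapsto \partial_1\partial_2\partial_3\otimes m$, introduced above. Since $\Lambda_3$ is Frobenius (so projectives coincide with injectives, and an arbitrary direct sum of copies of $\Lambda_3$ is still injective over the Artinian algebra $\Lambda_3$), the module $E(M)$ is injective. Thus any factorization $M\xrightarrow{\alpha}P'\xrightarrow{\beta}N$ through a projective $P'$ can be rerouted: injectivity of $P'$ and the fact that $j_M$ is mono let me extend $\alpha$ to $\tilde\alpha\colon E(M)\to P'$ with $\tilde\alpha\,j_M=\alpha$, whence $f=\beta\alpha=(\beta\tilde\alpha)\,j_M$ factors through $E(M)$. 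The converse is immediate because $E(M)$ is itself projective. Therefore $P\Hom(M,N)$ is exactly the set of composites $h\circ j_M$ with $h\in\Hom_{\mc{M}_3}(E(M),N)$.

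To evaluate these composites I would use that $\Lambda_3\otimes M$ with the diagonal (Hopf) action is a \emph{free} $\Lambda_3$-module, freely generated by the elements $1\otimes m$; consequently restriction along $m\mapsto 1\otimes m$ gives an isomorphism $\Hom_{\mc{M}_3}(E(M),N)\cong \HOM_\Bbbk(M,N\{-1,-1,-1\})$, sending $h$ to $g:=h(1\otimes-)$, a $\Bbbk$-linear map of tridegree $(-1,-1,-1)$. The crux is then the identity $h(j_M(m))=h(\partial_1\partial_2\partial_3\otimes m)=\partial_1\partial_2\partial_3(g)(m)$: one expands $\partial_1\partial_2\partial_3\cdot(1\otimes m)$ in the diagonal action as $\partial_1\partial_2\partial_3\otimes m$ plus lower terms of the shape $\partial_S\otimes\partial_{S^c}m$, applies the $\Lambda_3$-linear map $h$, and solves the resulting triangular system to express $h(\partial_1\partial_2\partial_3\otimes m)$ as the alternating sum $\sum_S\pm\,\partial_S\,g(\partial_{S^c}m)$; this sum is precisely the threefold iterate of \eqref{eqn-d-action-on-morphism} computing $\partial_1\partial_2\partial_3(g)(m)$. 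Combining the three steps yields $P\Hom(M,N)=\partial_1\partial_2\partial_3\,\HOM_\Bbbk(M,N\{-1,-1,-1\})$, and hence the stated formula.

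The main obstacle is this last identity: matching a single application of the socle element $\partial_1\partial_2\partial_3$ through the diagonal action against its threefold iterate on the inner-hom is a bookkeeping exercise in which the Koszul signs $(-1)^{i_r}$ of \eqref{eqn-d-action-on-morphism} must be tracked carefully, and one should check that the overall sign is $+1$ (or else absorb it into the choice of $g$). Everything else---the Frobenius identity $\text{proj}=\text{inj}$, injectivity of $E(M)$, and the freeness of the diagonal module $\Lambda_3\otimes M$---is formal and has already been set up in the preceding discussion. I would note in passing that this is an instance of the general Hopfological principle \cite{Kh3,Q} that, for a finite-dimensional Hopf algebra with two-sided integral $\Lambda=\partial_1\partial_2\partial_3$, stable morphisms are computed as invariants of the inner-hom modulo the image of $\Lambda$.
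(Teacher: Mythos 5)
Your argument is correct, but it is worth noting that the paper does not actually prove this lemma: its ``proof'' is a one-line citation to Corollary~5.5 of Qi's \emph{Hopfological algebra} paper~\cite{Q}, which establishes exactly the general principle you mention in your closing remark (for a finite-dimensional Hopf algebra with integral $\Lambda$, stable morphisms are invariants of the inner-hom modulo the image of $\Lambda$). What you have written is, in effect, a self-contained specialization of that cited result to $\Lambda_3$ with integral $\partial_1\partial_2\partial_3$, and the three ingredients you isolate are the right ones: (i) the image of $\partial_1\partial_2\partial_3$ on $\HOM_\Bbbk$ lands in genuine morphisms because it is killed by the augmentation ideal; (ii) every morphism factoring through a projective factors through the single injective envelope-type object $E(M)=\Lambda_3\otimes M\{-1,-1,-1\}$, by the Frobenius property and injectivity of arbitrary direct sums of $\Lambda_3$ over a Noetherian ring; (iii) freeness of the diagonal module $\Lambda_3\otimes M$ on the elements $1\otimes m$ converts $h\circ j_M$ into the threefold iterate of the action~\eqref{eqn-d-action-on-morphism} on $g=h(1\otimes -)$. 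The only point deserving more care than you give it is the sign in step (ii): with the super convention $\partial_r(x\otimes m)=\partial_r x\otimes m+(-1)^{|x|}x\otimes\partial_r m$ and $|\partial_1\partial_2\partial_3|$ odd, the map $m\mapsto\partial_1\partial_2\partial_3\otimes m$ intertwines the actions only up to a global sign ($\partial_r\cdot j_M(m)=-j_M(\partial_r m)$), so one should either insert $(-1)^{|m|}$ or use $M\otimes\Lambda_3$ with the integral in the second slot; this is exactly the kind of bookkeeping you flag at the end, it does not affect the image of $j_M$, and the paper itself elides it. So: correct, and a useful expansion of a proof the paper outsources.
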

\begin{proof}
See \cite[Corollary 5.5]{Q}.
\end{proof}

We introduce another cone construction defined for morphisms in the abelian category $\mc{M}_3$.
Given a morphism $f\colon M\lra N$ in $\mc{M}_3$, the $\dif_3$-cone $\mathrm{C}_3(f)$, as a trigraded vector space, is the object $M\{0,0,-1\} \oplus N$, on which the $\Lambda_3$-generators act by
\begin{gather}\label{eqn-d3-action-on-d3-cone}
 \partial_3(m,n)=(-\partial_3 m , f(m)+\partial_3(n)),
\end{gather}
and $\partial_j(m,n)=(\partial_j m, \partial_j n)$ for $j=1,2$.

Alternatively, regard $\Lambda_1^\prime=\Bbbk[\dif_3]/(\dif_3^2)$ as a trigraded $\Lambda_3$-module via the homomorphism $\nu$ (see equation \eqref{eqn-augmentations}), the $\dif_3$-cone is defined as the push-out of $f\colon M\lra N$ and $\dif_3\otimes \Id_M \colon M \lra \Lambda_1^\prime \otimes M$. This is the top square of the following diagram, whose columns are short exact in the abelian category because of the push-out property:
\begin{gather}\label{eqn-C3cone-as-pushout}
\begin{split}&
\xymatrix{
M \ar[r]^f \ar[d]_{\dif_3\otimes \Id_M} & N \ar[d]\\
\Lambda_1^\prime \otimes M\{0,0,-1\} \ar[d] \ar[r] & \mathrm{C}_3(f) \ar[d]\\
M\{0,0,-1\} \ar@{=}[r] & M\{0,0,-1\}.
}
\end{split}
\end{gather}

\looseness=-1 Define $\mc{R}_r:=\mathrm{C}_3(\mathrm{in}_r)$, i.e., it is the functor in $\mc{M}_3$ that takes a module $M$ to the $\dif_3$-cone of the homomorphism $\mathrm{in}_r\colon \mc{U}_r(M) \lra M$. Let $\mc{R}_r^\prime:=\mathrm{C}_3(\mathrm{out}_r)\{0,0,1\}$, which takes $M$ to the $\dif_3$-cone of $\mathrm{out}_r\colon M \lra \mc{U}_r(M)\{-1,-1,0\}$, with the grading
shifted by $\{0,0,1\}$, so that the vector spaces in the nodes of $M$ stay
in their original tridegrees, and $\partial_3$ changes sign in its action on
$\mc{U}_r(M)$, not~$M$.

\begin{lem}The functors $\mc{R}_r$, $\mc{R}_r^\prime$ descend to well-defined functors on the stable category $\mc{SM}_3$.
\end{lem}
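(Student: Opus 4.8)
The plan is to exhibit each of $\mc{R}_r$ and $\mc{R}_r^\prime$ as an additive endofunctor of $\mc{M}_3$ that sends projective modules to projective modules. Since the morphisms of $\mc{SM}_3$ are those of $\mc{M}_3$ modulo the ideal $\mathcal{I}$ of maps factoring through a projective object, any additive functor with this property automatically preserves $\mathcal{I}$ and hence descends. Concretely, if $\phi\colon M\to N$ factors as $M\to P\to N$ through a projective $P$, then $\mc{R}_r(\phi)$ factors through $\mc{R}_r(P)$; once $\mc{R}_r(P)$ is known to be projective, $\mc{R}_r(\phi)\in\mathcal{I}$, so $[\phi]\mapsto[\mc{R}_r(\phi)]$ is well defined on $\mc{SM}_3$, and likewise for $\mc{R}_r^\prime$. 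Additivity of both functors is immediate, as they are $\dif_3$-cones of the natural transformations $\mathrm{in}_r$ and $\mathrm{out}_r$ between the additive functors $\mc{U}_r$ and $\mathrm{Id}$.

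To reduce the cone statement, I would use the pushout presentation \eqref{eqn-C3cone-as-pushout}: for any $f\colon M\to N$ its right-hand column is a short exact sequence $0\to N\to \mathrm{C}_3(f)\to M\{0,0,-1\}\to 0$ in $\mc{M}_3$. Because $\Lambda_3$ is a trigraded Frobenius algebra, projective objects of $\mc{M}_3$ are also injective, so whenever one of the two outer terms is projective this sequence splits. Taking $f=\mathrm{in}_r\colon \mc{U}_r(P)\to P$ and $f=\mathrm{out}_r\colon P\to \mc{U}_r(P)\{-1,-1,0\}$ for a projective $P$ presents $\mc{R}_r(P)$ and $\mc{R}_r^\prime(P)$ (up to the shift $\{0,0,1\}$ in the latter case) as extensions whose outer terms are a shift of $P$ and a shift of $\mc{U}_r(P)$. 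Both extensions split, and projectivity is preserved under grading shift, so the problem collapses to a single claim: $\mc{U}_r(P)$ is projective whenever $P$ is.

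For this last claim, additivity reduces us to an indecomposable projective, and since a grading shift of $\Lambda_3$ merely reindexes the summands of $\mc{U}_r$ by a further shift, it suffices to treat $P=\Lambda_3$. Here I would examine $\mc{U}_r(\Lambda_3)=\bigoplus_{i-j=r}Q\otimes(\Lambda_3)_{i,j}$ term by term: inspecting the eight-element basis of $\Lambda_3$ shows that $(\Lambda_3)_{i,j}$, viewed only as a $\Lambda_1^\prime$-module via $\nu$, is free of rank one for $(i,j)\in\{(0,0),(1,0),(0,1),(1,1)\}$ and zero otherwise. For such a free factor the action on $Q\otimes\Lambda_1^\prime$ separates, with $\dif_1,\dif_2$ acting only on $Q$ and $\dif_3$ only on $\Lambda_1^\prime$, since $\dif_3$ annihilates $Q$ while $\dif_1,\dif_2$ annihilate $\Lambda_1^\prime$; hence $Q\otimes\Lambda_1^\prime\cong\Lambda_3$ (equivalently, $\dif_1\dif_2\dif_3(\omega\otimes 1)\neq 0$ forces the eight-dimensional module to be free of rank one). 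Thus every nonzero summand of $\mc{U}_r(\Lambda_3)$ is a shift of $\Lambda_3$, so $\mc{U}_r(\Lambda_3)$ is projective, and the splitting of the previous paragraph yields projectivity of $\mc{R}_r(P)$ and $\mc{R}_r^\prime(P)$.

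I expect the only real work to be in this last paragraph, namely the identification $Q\otimes\Lambda_1^\prime\cong\Lambda_3$ together with the bookkeeping of which pieces $(\Lambda_3)_{i,j}$ survive; everything else—additivity, the factoring criterion for descent, and the splitting of the cone sequence—is formal once the Frobenius property of $\Lambda_3$ is invoked.
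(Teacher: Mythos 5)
Your proposal is correct and follows essentially the same route as the paper: reduce descent to showing projectives go to projectives, split the cone's short exact sequence using the Frobenius (hence projective = injective) property of $\Lambda_3$, and then verify that $\mc{U}_r(\Lambda_3\{i,j,k\})$ is projective via the decomposition of $\Lambda_3$ into four shifted copies of $\Lambda_1^\prime$ together with the identification $Q\otimes\nu^*(\Lambda_1^\prime)\cong\Lambda_3$. The only cosmetic difference is that you spell out the formal descent argument (functors preserving projectives preserve the ideal of morphisms factoring through projectives), which the paper takes for granted.
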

\begin{proof}It suffices to show that, if $M$ is a projective $\Lambda_3$-module, then $\mc{R}_r(M)$ and $\mc{R}^\prime_r(M)$ are both projective. Let us do this for $\mc{R}_r$, and the $\mc{R}^\prime_r$ case is similar.

By \eqref{eqn-C3cone-as-pushout}, $\mc{R}_r(M)$ fits into a short exact sequence of $\Lambda_3$-modules
\[
0\lra M \lra \mc{R}_r(M)\lra \mc{U}_r(M)\{0,0,-1\} \lra 0.
\]
Since $\Lambda_3$ is Frobenius, $M$ is also injective and the above sequence splits. We are thus reduced to showing that $\mc{U}_r(M)\{0,0,-1\}$ is graded projective. Without loss of generality, we may assume that $M\cong \Lambda_3\{i,j,k\}$ is indecomposable. As $\Lambda_1^\prime$-modules, there is a direct sum decompostion
\[
\Lambda_3\cong \Lambda_1^\prime\oplus \Lambda_1^\prime\{1,0,0\} \oplus \Lambda_1^\prime\{0,1,0\} \oplus \Lambda_1^\prime\{1,1,0\}.
\]
Using this decomposition and the fact that $Q\otimes \nu^*(\Lambda_1^\prime) \cong \Lambda_3$, we have
\begin{gather*}
\mc{U}_r(\Lambda_3\{i,j,k\})
=\begin{cases}
\Lambda_3\{i,j,k\} \oplus \Lambda_3\{i+1,j+1,k\}, & i-j=r,\\
\Lambda_3\{i+1,j,k\}, & i-j=r+1,\\
\Lambda_3\{i,j+1,k\}, & i-j=r-1,\\
0, & |i-j-r|>1.
\end{cases}
\end{gather*}
The result follows.
\end{proof}

\begin{thm} \label{thm-3-braid-group-actions}\quad
\begin{enumerate}\itemsep=0pt
\item [$(i)$]The functors $\mc{R}_r,\mc{R}^\prime_r$ are invertible mutually-inverse endofunctors on the stable catego\-ry~$\mc{SM}_3$.
\item[$(ii)$]The following functor isomorphisms hold:
\begin{subequations}
\begin{gather}
\label{rel-yb}
\mc{R}_r \mc{R}_{r+1} \mc{R}_r \cong \mc{R}_{r+1} \mc{R}_r \mc{R}_{r+1},\\
\label{rel-comm}
\mc{R}_r \mc{R}_s \cong \mc{R}_s \mc{R}_r\qquad \mathrm{if}\quad |r-s|>1.
\end{gather}
\end{subequations}
\end{enumerate}
Consequently, the collection of functors $\{\mc{R}_r\,|\,r\in \Z\}$ gives rise to an action of the infinite braid group of infinitely many strands $\mathrm{Br}_\infty$ on the triangulated category $\mc{SM}_3$.
\end{thm}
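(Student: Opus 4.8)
The plan is to realize $\mc{R}_r$ and $\mc{R}_r^\prime$ as two-term complexes in the $\dif_3$-direction and to prove everything by the standard Rickard-complex technique: form the iterated $\dif_3$-cones, decompose their terms using the Temperley--Lieb-type isomorphisms of Proposition~\ref{prop-TL-action}, and then cancel acyclic summands by Gaussian elimination. Two structural facts organize the argument. First, since $\Lambda_3$ is Frobenius, every short exact sequence in $\mc{M}_3$ descends to a distinguished triangle in $\mc{SM}_3$ (Happel~\cite{Ha}); applied to the pushout square~\eqref{eqn-C3cone-as-pushout}, the $\dif_3$-cones realize $\mc{R}_r=\mathrm{C}_3(\mathrm{in}_r)$ and $\mc{R}_r^\prime=\mathrm{C}_3(\mathrm{out}_r)\{0,0,1\}$ as the functors sitting in triangles built from $\mathrm{in}_r\colon\mc{U}_r\To\Id$ and $\mathrm{out}_r\colon\Id\To\mc{U}_r\{-1,-1,0\}$. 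Second, each $Q\otimes M_{i,j}$, and hence every value of $\mc{U}_r$, is free as a $\Lambda_2$-module; a $\Lambda_2$-free object that is $\dif_3$-contractible is therefore $\Lambda_3$-projective, so a $\dif_3$-homotopy equivalence between such objects is an isomorphism in $\mc{SM}_3$. This is what lets me replace abstract triangle chases by explicit cancellations of $\dif_3$-complexes.

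For part~(i) I would compute $\mc{R}_r\mc{R}_r^\prime(M)$ as the totalization of the commuting $\dif_3$-square whose corners are $\mc{U}_r M$, $\mc{U}_r^2 M\{-1,-1,0\}$, $M$, and $\mc{U}_r M\{-1,-1,0\}$, with vertical maps $\mathrm{in}_r$ and horizontal maps $\mathrm{out}_r$. The isomorphism $\mc{U}_r^2\cong\mc{U}_r\{1,1,0\}\oplus\mc{U}_r$ splits the top-right corner into $\mc{U}_r M\oplus\mc{U}_r M\{-1,-1,0\}$. The key computation, using the formula $\omega\otimes m\mapsto m$ for $\mathrm{in}_r$ together with~\eqref{eqn-out-componentwise} for $\mathrm{out}_r$, is that the component of $\mathrm{in}_r$ from the summand $\mc{U}_r M\{-1,-1,0\}$ to the bottom-right corner is an isomorphism; cancelling this acyclic pair leaves a two-term complex whose remaining differential $\mc{U}_r M\to\mc{U}_r M$ is again an isomorphism, and a second cancellation collapses everything onto the corner $M$. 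Hence $\mc{R}_r\mc{R}_r^\prime\cong\Id$, and $\mc{R}_r^\prime\mc{R}_r\cong\Id$ is symmetric, so $\mc{R}_r$ and $\mc{R}_r^\prime$ are mutually inverse autoequivalences.

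Part~(ii) splits into the easy commutation relation and the substantial braid relation. For $|r-s|>1$ the vanishing $\mc{U}_r\mc{U}_s=0$ kills the mixed corner of the relevant squares, so both $\mc{R}_r\mc{R}_s(M)$ and $\mc{R}_s\mc{R}_r(M)$ reduce to the same three-term $\dif_3$-complex with terms $\mc{U}_r M$, $\mc{U}_s M$, and $M$, giving~\eqref{rel-comm} at once. For the Yang--Baxter relation~\eqref{rel-yb} I would write out both $\mc{R}_r\mc{R}_{r+1}\mc{R}_r$ and $\mc{R}_{r+1}\mc{R}_r\mc{R}_{r+1}$ as iterated $\dif_3$-cones (a cube of eight corners each), decompose the repeated-index terms via $\mc{U}_r^2\cong\mc{U}_r\{1,1,0\}\oplus\mc{U}_r$, simplify the alternating terms via $\mc{U}_r\mc{U}_{r\pm1}\mc{U}_r\cong\mc{U}_r\{1,1,0\}$, and then Gaussian-eliminate. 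The goal is to reduce each side to the same minimal $\dif_3$-complex, one that is manifestly symmetric under interchanging $r$ and $r+1$, thereby showing the two triple composites are isomorphic in $\mc{SM}_3$.

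The braid relation is the main obstacle. The difficulty is twofold: bookkeeping the grading shifts and signs (note the negative identity edge in~\eqref{eqn-cube-diagram} and the signs in~\eqref{eqn-out-componentwise}), and, more essentially, verifying that the specific component maps appearing after the $\mc{U}_r^2$- and $\mc{U}_r\mc{U}_{r\pm1}\mc{U}_r$-decompositions are genuine isomorphisms, so that the intended summands cancel. These verifications are exactly the biadjunction (``zig-zag'') compatibilities between $\mathrm{in}_r$ and $\mathrm{out}_r$, asserting that prescribed composites of the units and counits equal $\pm\Id$ or $0$; they must be checked from the explicit formulas, or organized once and for all into a short list of relations from which both the invertibility in part~(i) and the cancellations in the braid relation follow formally. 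Once the braid and commutation relations hold among the invertible functors $\mc{R}_r$, sending $\sigma_r\mapsto\mc{R}_r$ and $\sigma_r^{-1}\mapsto\mc{R}_r^\prime$ defines a group homomorphism from $\mathrm{Br}_\infty$ to the group of autoequivalences of the triangulated category $\mc{SM}_3$, which is the asserted action.
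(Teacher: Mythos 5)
Your proposal follows essentially the same route as the paper's proof: both realize $\mc{R}_r$, $\mc{R}^\prime_r$ as $\dif_3$-cones, expand the composites into iterated cones, decompose the repeated terms via Proposition~\ref{prop-TL-action}, and strip off projective-injective (equivalently, $\Lambda_2$-free and $\dif_3$-contractible, since $Q\otimes\Lambda_1^\prime\cong\Lambda_3$) summands until a manifestly symmetric complex remains. The component-map verifications you defer are exactly the explicit computations with $\mathrm{in}_r$, $\mathrm{out}_r$ and formula~\eqref{eqn-out-componentwise} that the paper carries out in detail.
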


The proof of the theorem will occupy the next subsection.

\begin{rem}In this section, we have interpreted the three differentials of~$\Lambda_3$ in two different ways: the $\Lambda_2\subset \Lambda_3$ plays the role of the algebra~$A$ (cf.~Section~\ref{subsec-Lambda-2-zig-zag}), while $\dif_3$ behaves more like a ``homological differential''. This apparent symmetry breaking allows one to construct three equivalent braid group actions on $\mc{SM}_3$ as in Theorem~\ref{thm-3-braid-group-actions}, by the automorphism of $\Lambda_3$ permuting the indices $\{1,2,3\}$.
\end{rem}

\subsection{Proof of Theorem \ref{thm-3-braid-group-actions}}
{\bf Invertibility of $\mc{R}_r$.} First we show $\mc{R}^\prime_r \mc{R}_r \cong \Id$. We check the effect of the left hand side on a trigraded $\Lambda_3$-module $M$.
\begin{align}\label{eqn-R-prime-R}
\mc{R}_r^\prime (\mc{R}_r(M)) & \cong \mc{R}_r^\prime \left(\mc{U}_r(M)\{0,0,-1\}\xrightarrow{\mathrm{in}_r}M \right)\nonumber \\
& \cong
\left(
\begin{gathered}
 \xymatrix{
 \mc{U}_r(M)\{0,0,-1\} \ar[r]^-{-\mathrm{in}_r}\ar[d]_{\mathrm{out}_r} & M \ar[d]^{\mathrm{out}_r} \\
 \mc{U}_r^2(M)\{-1,-1,0\} \ar[r]^-{\mathrm{in}_r} & \mc{U}_r(M)\{-1,-1,1\}
 }
\end{gathered}
\right).
\end{align}
Here, in the diagram, the horizontal arrows are interpreted as the $\dif_3$-differential arising from the $\dif_3$-cone of $\mathrm{in}_r$, while the vertical arrows indicate that of $\mathrm{out}_r$. The differential action by~$\dif_1$,~$\dif_2$ preserves the position of the node, while the~$\dif_3$ acts both internally at the nodes and transfer elements long the arrows (see equation~\eqref{eqn-d3-action-on-d3-cone}).

By Proposition~\ref{prop-TL-action}, we may decompose
\begin{subequations}
\begin{gather}\label{eqn-U-square-term-1}
\mc{U}^2_r(M)\{-1,-1,0\}\cong
\mc{U}_r(M) \oplus \mc{U}_r(M)\{-1,-1,0\}.
\end{gather}
As in the proof of the proposition, we further identify
\begin{gather}\label{eqn-U-square-term-2}
\mc{U}_r(M)\cong \bigg(\bigoplus_{i-j=r} Q\otimes \dif_1\dif_2\omega \otimes M_{i,j} \bigg) \{-1,-1,0\},
\\
\label{eqn-U-square-term-3}
\mc{U}_r(M)\{-1,-1,0\}\cong \bigg(\bigoplus_{i-j=r} Q\otimes \omega \otimes M_{i,j} \bigg) \{-1,-1,0\}.
\end{gather}
\end{subequations}

By the definition of the $\dif_3$-cone, the sum of terms on the lower horizontal line of \eqref{eqn-R-prime-R} consitutes a $\Lambda_3$-submodule of $\mc{R}^\prime_r(\mc{R}_r(M))$. The morphism $\mathrm{in}_r$ on the lower horizontal line of~\eqref{eqn-R-prime-R} maps the summand \eqref{eqn-U-square-term-3} isomorphically onto $\mc{U}_r\{-1,-1,1\}$. Hence we have in~$\mc{R}^\prime_r(\mc{R}_r(M))$ a~$\Lambda_3$-submodule
\begin{gather}
\big(\mc{U}_r(M)\{-1,-1,0\}\xrightarrow{\mathrm{in}_r} \mc{U}_r(M)\{-1,-1,1\}\big)\nonumber\\
\qquad{} \cong \mc{U}_r(M)\otimes \nu^*(\Lambda_1^\prime) \cong \bigoplus_{i-j=r} (Q\otimes \nu^*(\Lambda_1^\prime)) \otimes M_{i,j}.\label{eqn-d3-cone-of-iso-proj}
\end{gather}
As $Q\otimes \Lambda_1^\prime \cong \Lambda_3$ is a tri-graded free $\Lambda_3$-module, it is a not only a submodule in $\mc{R}^\prime_r(\mc{R}_r(M))$ but also a direct summand, which is annihilated when passing to the stable category $\mc{SM}_3$. We thus may safely identify $ \mc{R}^\prime_r(\mc{R}_r(M))$ with the quotient of it by this submodule, which we denote by~$M_1$.

Now $M$ is clearly a $\Lambda_3$-submodule in $M_1$. We claim that $M_1/M$ is also a free $\Lambda_3$-module, and hence is a direct summand in $M_1$ whose complement is isomorphic to $M$.
It then follows that the natural inclusion map $M\hookrightarrow M_1$ is an isomorphism in $\mc{SM}_3$.

To prove the claim, note that
\begin{gather*}
 M_1/M\cong \big( \mc{U}_r(M)\{0,0,-1\}\xrightarrow{\mathrm{out}_r}\mc{U}_r(M) \big),
\end{gather*}
where the right hand side denotes a $\dif_3$-cone. If $m\in M_{i,j}$ is a homogeneous element, the map $\mathrm{out}_r$ has, by equation \eqref{eqn-out-componentwise}, the effect
\begin{gather*}
\mathrm{out}_r(\omega\otimes m) =(-1)^{i+j}\left(\omega\otimes \dif_1\dif_2\omega \otimes m + \dif_1\dif_2\omega\otimes \omega\otimes m\right), \\
\mathrm{out}_r(\dif_1 \omega\otimes m) = (-1)^{i+j+1} \dif_1\omega \otimes \dif_2\dif_1\omega \otimes m,\\
\mathrm{out}_r(\dif_2\omega\otimes m) = (-1)^{i+j+2} \dif_2\omega \otimes \dif_1\dif_2\omega \otimes m,\\
\mathrm{out}_r(\dif_1\dif_2\omega\otimes m) = (-1)^{i+j+2}\dif_1\dif_2\omega\otimes \dif_1\dif_2\omega \otimes m.
\end{gather*}
The right hand side of the first equation contains elements in $\mc{U}_r(M)\{-1,-1,0\}$ (see equation~\eqref{eqn-U-square-term-3}), which has already been mod out in~$M_1$. The rest of the terms on the right hand side of the equations have their middle term $\dif_1\dif_2\omega$. It follows that $\mathrm{out}_r$ maps $\mc{U}_r(M)\{0,0,-1\}$ isomorphically onto~$\mc{U}_r(M)$. The claim follows.

It is not hard to find a summand in $\mc{R}_r^\prime(\mc{R}_r(M))$ isomorphic to~$M$. Denote by
\begin{gather*}\mathrm{out}_r^{13}\colon \ M \lra Q\otimes Q^\nu \otimes M^\nu,
\qquad
m \mapsto \sum_i h_i \otimes \omega \otimes m_i,
\end{gather*}
where $h_i$, $m_i$ are the components of $\mathrm{out}_r(m)=\sum_i h_i\otimes m_i \in Q\otimes M^\nu$ as in equation~\eqref{eqn-out-componentwise}. The submodule
\[
\big\{\big({-}\mathrm{out}_r^{13}(m),m\big)\,|\,m\in M\big\}\subset\mc{U}_r^2(M)\{-1,-1,0\}\oplus M
\]
constitutes, by the above discussion, a $\Lambda_3$-summand isomorphic to $M$. The inclusion of this summand realizes the functor isomorphism $\mathrm{Id}\cong \mc{R}_r^\prime \mc{R}_r$.

The isomorphism $\mc{R}_r\mc{R}_r^\prime\cong \mathrm{Id}$ follows by a similar argument. Essentially, one just needs to flip the last term of~\eqref{eqn-R-prime-R} along its northwest-southeast diagonal. We leave the details to the reader as an exercise.

{\bf Braid relations.}
We next check the functor relations~\eqref{rel-yb} and~\eqref{rel-comm}.

The commutation relation~\eqref{rel-comm} is easy to check, as one can readily see that both sides are functorially isomorphic, when applied to a trigraded $\Lambda_3$-module $M$, to the $\dif_3$-cone of the morphism $\mathrm{in}_r\oplus \mathrm{in}_s$:
\begin{gather*}
\begin{gathered}
\xymatrix{
\mc{U}_r(M)\{0,0,-1\}\ar[dr]^{\mathrm{in}_r} & \\
& M.\\
\mc{U}_s(M)\{0,0,-1\}\ar[ur]_{\mathrm{in}_s} &
}
\end{gathered}
\end{gather*}
Here we have applied Proposition \ref{prop-TL-action} so that $\mc{U}_r\mc{U}_s(M)\cong 0 \cong \mc{U}_s\mc{U}_r(M)$.

To check the functor relation \eqref{rel-yb}, we first compute $\mc{R}_r\mc{R}_{r+1}\mc{R}_r$ applied to a $\Lambda_3$-module~$M$, which is equal to the total $\dif_3$-complex
\begin{gather*}
\begin{gathered}
\xymatrix@C=1.8em{
& \mc{U}_r\mc{U}_{r+1}(M)\{0,0,-2\}\ar[r]^-{-\mathrm{in}_{r+1}}\ar[dr]_<<{\mathrm{in}_r} & \mc{U}_r(M)\{0,0,-1\} \ar[drr]^-{\mathrm{in}_r} && \\
\mc{U}_r\mc{U}_{r+1}\mc{U}_r(M)\{0,0,-3\} \ar[ur]^-{\mathrm{in}_r} \ar[r]^-{-\mathrm{in}_{r+1}} \ar[dr]_-{\mathrm{in}_r} & \mc{U}_r^2(M)\{0,0,-2\} \ar[ur]^<<{-\mathrm{in}_r} \ar[dr]_<<{\mathrm{in}_r}& \mc{U}_{r+1}(M)\{0,0,-1\} \ar[rr]^-{\mathrm{in}_{r+1}}&& M.\\
& \mc{U}_{r+1}\mc{U}_r(M)\{0,0,-2\}\ar[r]_-{\mathrm{in}_{r+1}} \ar[ur]^<<{-\mathrm{in}_r} & \mc{U}_r(M)\{0,0,-1\} \ar[urr]_{\mathrm{in}_{r}} &&
}
\end{gathered}
\end{gather*}
We will gradually strip off the projective-injective summands of this module, which, for brevity, we will call~$M_0$ in what follows.

By Proposition \ref{prop-TL-action}, we identify
\begin{gather*}
 \mc{U}_r\mc{U}_{r+1}\mc{U}_r(M)\{0,0,-3\}\cong \mc{U}_r(M)\{1,1,-3\}\cong \bigoplus_{i-j=r}Q\otimes \dif_2\omega\otimes \dif_1\omega \otimes M_{i.j}\{0,0,-3\} .
\end{gather*}
By the definition \eqref{mor-1} of $\mathrm{in}_{r+1}$, the external $\dif_3$-differential $-\mathrm{in}_{r+1}$ maps this term isomorphically onto the summand $\mc{U}_r(M)\{1,1,-2\}$ of
\begin{gather}\label{eqn-U-square-in-yb}
\mc{U}_r^2(M)\{0,0,-2\}\cong \mc{U}_r(M)\{1,1,-2\}\oplus \mc{U}_r\{0,0,-2\}.
\end{gather}
Indeed, componentwise, the morphism has the effect
\begin{gather*}
Q\otimes \dif_2\omega\otimes \dif_1\omega \otimes M_{i.j}\{0,0,-3\} \lra Q\otimes \dif_1\dif_2\omega \otimes M_{i,j}\{0,0,-3\},\\
h\otimes \dif_2\omega\otimes \dif_1\omega\otimes m \mapsto -h\otimes \dif_2\dif_1\omega \otimes m.
\end{gather*}
Via these identifications, denote the direct sum of every term in $M_0$ other than $ \mc{U}_r(M)\{1,1,-3\}
$ and $\mc{U}_r(M)\{1,1,-2\}$ by $M_1$. Clearly $M_1$ is a $\Lambda_3$-submodule, whose quotient is equal to the $\dif_3$-cone
\[
\Big(\mc{U}_r(M)\{1,1,-3\}\xrightarrow{-\mathrm{Id}_{\mc{U}_r(M)\{1,1,-2\}}} \mc{U}_r(M)\{1,1,-2\}\Big).
\]
This cone is a projective-injective object in $\mc{M}_3$ (cf.~equation \eqref{eqn-d3-cone-of-iso-proj}). Hence~$M_1$ is isomorphic to $M_0$ in~$\mc{SM}_3$.

Next, inside $M_1$, the second summand of \eqref{eqn-U-square-in-yb} maps onto the anti-diagonal in the direct sum of two copies of $\mc{U}_r(M)\{0,0,-1\}$. Therefore
\[
\Big(\mc{U}_r(M)\{0,0,-2\}\xrightarrow{-\mathrm{in}_r\oplus \mathrm{in}_r} \mathrm{Im}(-\mathrm{in}_r\oplus \mathrm{in}_r)\Big)\cong \mathrm{C}_3 (\mathrm{Id}_{\mc{U}_r(M)\{0,0,-1\}} )
\]
is a projective $\Lambda_3$-module, and thus is isomorphic to zero in $\mc{SM}_3$. Modulo these terms, and equating the quotient as under the sum map
\begin{gather*}\mc{U}_r(M)\{0,0,-1\} \oplus \mc{U}_r(M)\{0,0,-1\}/\mathrm{Im}(-\mathrm{in}_r\oplus \mathrm{in}_r) \cong \mc{U}_r(M)\{0,0,-1\},\end{gather*}
we have that $M_1$ is isomorphic to the following total $\dif_3$-cone~$M_2$:
\begin{gather}\label{eqn-braid-complex-2}
M_2=\left(
\begin{gathered}
\xymatrix{
 \mc{U}_r\mc{U}_{r+1}(M)\{0,0,-2\}\ar[r]^-{\mathrm{in}_r}\ar[ddr]_<<{-\mathrm{in}_{r+1}} & \mc{U}_{r+1}(M)\{0,0,-1\} \ar[dr]^{\mathrm{in}_{r+1}} & \\
& & M\\
 \mc{U}_{r+1}\mc{U}_r(M)\{0,0,-2\}\ar[r]^-{\mathrm{in}_{r+1}} \ar[uur]^<<{-\mathrm{in}_r} & \mc{U}_r(M)\{0,0,-1\} \ar[ur]_{\mathrm{in}_r} &
}
\end{gathered}
 \right).
\end{gather}

It follows by the above discussion that $\mc{R}_r\mc{R}_{r+1}\mc{R}_r(M)$ is isomorphic to $M_2$ in $\mc{SM}_3$, and this isomorphism is clearly functorial in $M$.

A similar computation for $\mc{R}_{r+1}\mc{R}_{r}\mc{R}_{r+1}(M)$ shows that it is functorially isomorphic to~$M_2$ of equation~\eqref{eqn-braid-complex-2}. The braid relation follows.

\subsection{Connection to homological algebra of zig-zag algebras}
In view of Section~\ref{subsec-Lambda-2-zig-zag}, it is not surprising that trigraded $\Lambda_3$-modules are closely related to the homological algebra of the zig-zag algebra $A$. The main goal of this subsection is to utilize this relationship to establish the faithfulness of the braid group $\mathrm{Br}_\infty$ action on~$\mc{SM}_3$ (Theorem~\ref{thm-3-braid-group-actions}), building on the results of~\cite{KhSe}.

Let $M$ be a complex of graded $A$-modules
\[
M=\Big(
\cdots \lra M_{k-1} \stackrel{d_{k-1}}{\lra} M_k \stackrel{d_k}{\lra} M_{k+1} \lra \cdots
\Big),
\]
where each $M_k=\oplus_{j\in \Z}M_{k}^j$ is a graded $A$-module.
Recall that a morphism $f\colon M\lra N$ is called \emph{null-homotopic} if there is a collection of homogeneous $A$-module maps $h_k\colon M_{k}\lra N_{k-1}$, $k\in \Z$, such that $d_{k-1}h_{k}+h_{k+1}d_k=f_k$ holds for all~$k$. The homotopy category $\mc{C}(A)$ is the quotient of the category of chain complexes of graded $A$-modules by the ideal of null-homotopic morphisms. A complex $M$ is called \emph{contractible} if $\mathrm{Id}_M$ is null-homotopic.

The homotopy category $\mc{C}(A)$ carries two commuting grading shifts denoted by $\langle 1 \rangle$ and $[1]$ respectively. They are defined by
\begin{gather*}
 ( M\langle 1 \rangle )^j_k:=M^{j-1}_k,\qquad (M[1] )^j_k:=M^{j}_{k+1}.
\end{gather*}
In addition, the automorphism $\mc{T}$ of $\mc{M}(A)$ extends to an automorphism of $\mc{C}(A)$, denoted by the same letter, defined by termwise applying $\mc{T}$ on complexes:
\[
\mc{T}(M):=\Big(
\cdots \lra \mc{T}(M_{k-1}) \xrightarrow{\mc{T}(d_{k-1})} \mc{T}(M_k) \xrightarrow{\mc{T}(d_k)} \mc{T}(M_{k+1}) \lra \cdots
\Big).
\]

In what follows, we will also use the notation $\mc{C}(A\pmod)$ to stand for the full subcategory of~$\mc{C}(A)$ consisting of complexes of graded projective $A$-modules up to homotopy.

We also re-interpret chain complexes of graded $A$-modules as differential graded modules over the graded dg algebra~$(A, d)$, where~$A$ sits in homological degree zero, and the natural grading of~$A$ is orthogonal to the homological grading. A chain complex of graded $A$-modules is equivalent to the data of a differential graded $(A,d)$-module
\begin{gather*}M=\bigoplus_{j,k\in \Z}M^j_{k},\qquad d\big(M_{k}^j\big)\subset M_{k+1}^j.\end{gather*}
Extending the (inverse) equivalence of Proposition~\ref{prop-abelian-equivalence}, there is an auto-equivalence of abelian categories
\begin{gather*}
\mc{G}_\infty\colon \ (A,d)\dmod \lra \mc{M}_3,
\end{gather*}
where, on the object $\mc{G}_\infty(M)\in \mc{M}_3$ for a given $M\in (A,d)\dmod$, the generator $\dif_3$ acts by the differential $(-1)^k d\colon M_{k}\lra M_{k+1}$. It follows from Proposition~\ref{prop-abelian-equivalence} that $\mc{G}_\infty$ commutes with the translation by the various shift functors as follows:
\begin{gather}
 \mc{G}_\infty (M\langle 1 \rangle) = \mc{G}_\infty(M)\{1,1,0\}, \nonumber\\
 \mc{G}_\infty(M[1]) = \mc{G}_\infty (M)\{0,0,-1\}, \nonumber\\
 \mc{G}_\infty (\mc{T}(M)) = \mc{G}_\infty(M)\{1,0,0\}.\label{eqn-G-commute-with-grading-shifts}
\end{gather}
As a result, one can deduce that
\begin{gather}\label{eqn-G-on-Pr}
 \mc{G}_\infty(P_r\langle j \rangle[k])\cong Q\{r+j,j,-k\}.
\end{gather}

\begin{lem}\label{lem-contractible-dg-mod}
A morphism of chain complexes $f\colon M \lra N$ of graded $A$-modules is null-homotopic if and only if it factors through the canonical embedding of graded dg modules over~$A$
\[
\lambda_M \colon \ M\lra M\otimes \Bbbk[d]/\big(d^2\big)[1],\qquad m\mapsto m\otimes d.
\]
Consequently, $M$ is contractible if and only if $M$ is a dg summand of $M\otimes \Bbbk[d]/\big(d^2\big)[1]$.
\end{lem}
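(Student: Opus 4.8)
The plan is to set up an explicit dictionary between contracting homotopies of $f$ and dg-module retractions realizing the factorization through $\lambda_M$, and then to read off the contractibility statement by specializing to $f=\Id_M$. First I would unwind the target. Following Section~\ref{secbicomplex}, a graded $\Bbbk[d]/(d^2)$-module is the same as a complex with $d$ acting as the differential, so the regular module $\Bbbk[d]/(d^2)$ is the contractible complex $\Bbbk\xrightarrow{=}\Bbbk$. Since $\Bbbk[d]/(d^2)$ is a Hopf algebra with $d$ primitive, the tensor product $M\otimes \Bbbk[d]/(d^2)$ carries the diagonal action $d(m\otimes x)=dm\otimes x+(-1)^{|m|}m\otimes dx$; as a graded $A$-module it therefore splits into summands $M\otimes 1$ and $M\otimes d$, with $d(m\otimes 1)=dm\otimes 1+(-1)^{|m|}m\otimes d$ and $d(m\otimes d)=dm\otimes d$. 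This is a cone on $\Id_M$, hence contractible, and (after the shift $[1]$) the assignment $\lambda_M\colon m\mapsto m\otimes d$ is the degree-preserving inclusion of $M$ as the subcomplex $M\otimes d$, with quotient $M[1]$.

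For the forward implication, suppose $f=g\circ\lambda_M$ for a morphism $g\colon M\otimes \Bbbk[d]/(d^2)[1]\to N$ of dg $A$-modules. I define $h(m):=g(m\otimes 1)$; since $m\otimes 1$ sits in homological degree $k-1$ for $m\in M_k$ after the shift, this is a degree~$-1$ map $h_k\colon M_k\to N_{k-1}$. Feeding $m\otimes 1$ into the chain-map identity $g\circ d=d\circ g$ and using $g(m\otimes d)=g(\lambda_M(m))=f(m)$ produces, up to Koszul signs, the homotopy relation $dh+hd=f$. Conversely, given a null-homotopy $h$ of $f$, I define $g$ on the two $A$-module summands by $g(m\otimes d):=f(m)$ and $g(m\otimes 1):=h(m)$. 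Then $g\circ\lambda_M=f$ by construction, and verifying that $g$ commutes with $d$ on the generators $m\otimes 1$ and $m\otimes d$ reduces exactly to the homotopy relation $f=dh+hd$ and to the fact that $f$ is a chain map, respectively. Throughout, $h$ and $g$ are automatically $A$-linear, because the $A$-action lives entirely on the $M$-factor and commutes with both $d$ and the $\Bbbk[d]/(d^2)$-factor.

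Finally, specializing the biconditional to $f=\Id_M\colon M\to M$ gives the consequence: $M$ is contractible precisely when $\Id_M$ is null-homotopic, which holds iff $\Id_M$ factors as $g\circ\lambda_M$; but such a $g$ is exactly a dg-module retraction of $\lambda_M$, which is the same as saying $\lambda_M$ exhibits $M$ as a dg direct summand of $M\otimes \Bbbk[d]/(d^2)[1]$. The only delicate point is the bookkeeping of the Koszul signs coming from the primitive coproduct on $d$ together with the shift $[1]$, which negates the differential; once the sign conventions are fixed at the outset, each of the two verifications above is a one-line check rather than a genuine obstacle.
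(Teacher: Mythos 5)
Your argument is essentially the paper's own proof: both directions are obtained from the same dictionary $h \leftrightarrow g$ between null-homotopies and dg-module maps out of $M\otimes \Bbbk[d]/\big(d^2\big)[1]$, with the contractibility statement read off by taking $f=\Id_M$ and interpreting the factorization as a retraction of $\lambda_M$. The only difference is that the paper resolves the sign issue you defer by defining $\widehat{h}(m\otimes 1):=(-1)^k h(m)$ for $m\in M_k$ (and correspondingly $h(m):=(-1)^k\widehat{h}(m\otimes 1)$ in the converse), which is exactly the Koszul correction needed for the unshifted generator $m\otimes 1$; your unsigned assignment $g(m\otimes 1):=h(m)$ would fail the chain-map check by that sign, but as you note this is routine to fix.
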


\begin{proof}Suppose $f=dh+hd\colon M\lra N$ is null-homotopic, with $h\colon M_{k}\lra N_{k-1}$ the null-homo\-topy map. We define
\[
\widehat{h}\colon \ M\otimes \Bbbk[d]/\big(d^2\big)[1]\lra N
\]
by, for any homogeneous $m\in M_{k}$,
\[
\widehat{h}(m\otimes 1):= (-1)^{k}h(m),\qquad \widehat{h}(m\otimes d):=dh(m)+hd(m).
\]
It is an easy exercise to check that $\widehat{h}$ is a map of dg $A$-modules. Then, we clearly have a~fac\-toriza\-tion $ f=\widehat{h}\circ \lambda_M $.

Conversely, if there is a factorization of dg $A$-modules
\[
\begin{gathered}
\xymatrix{
M\ar[dr]_-{\lambda_M} \ar[rr]^f && N \\
 & M\otimes \Bbbk[d]/\big(d^2\big)[1], \ar[ur]_{\widehat{h}} &
}
\end{gathered}
\]
define $h\colon M\lra N$ by $h(m):=(-1)^k\widehat{h}(m\otimes 1)$ for any $m\in M_{k}$. Another easy computation shows that $f=dh+hd$ is indeed null-homotopic.
\end{proof}

For the next result, for any graded dg module $M$ over $A$, denote by $M^\mu$ the corresponding dg module with the same underlying bigraded $A$-module as $M$, but the differential acting by zero instead. Under the equivalence $\mc{G}_\infty$, this corresponds to the $\mu$-pull-back (see equation~\eqref{eqn-augmentations}) of a trigraded $\Lambda_3$-module.

\begin{lem}\label{lem-iso-dg-contractible-mod}
Let $M$ be a graded dg module over $A$. There is an isomorphism of dg modules
\[
\phi_M\colon \ M\otimes \Bbbk[d]/\big(d^2\big) \lra M^\mu \otimes \Bbbk[d]/\big(d^2\big),
\]
defined by
\[
\phi_M(m\otimes 1):= m\otimes 1, \qquad \phi_M(m\otimes d):= (-1)^{k+1}d(m)\otimes 1+m\otimes d.
\]
for any $m\in M_{k}$.
\end{lem}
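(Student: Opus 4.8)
The plan is to check by hand that $\phi_M$ is a bijective $A$-module map commuting with the two total differentials, the content being entirely in the sign bookkeeping. First I would record the dg structures. On $M\otimes \Bbbk[d]/(d^2)$ the total differential $D$ is given by the Koszul rule, so that for homogeneous $m\in M_k$ one has $D(m\otimes 1)=d(m)\otimes 1+(-1)^k m\otimes d$ and $D(m\otimes d)=d(m)\otimes d$. On the target the internal differential of $M^\mu$ is zero, so the total differential $D^\mu$ satisfies $D^\mu(m\otimes 1)=(-1)^k m\otimes d$ and $D^\mu(m\otimes d)=0$.

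Next I would dispatch $A$-linearity and bijectivity, both of which are formal. Since $A$ sits in homological degree zero with trivial differential, the dg-module axiom reduces to $d(am)=a\,d(m)$, i.e.\ the differential $d$ of $M$ is $A$-linear; as $\phi_M$ is assembled out of the identity and of $d$ applied to the $M$-factor, it is an $A$-module homomorphism, and it manifestly preserves both the internal and the homological gradings. For bijectivity, note that relative to the $\Bbbk$-linear splitting into the $M\otimes 1$ and $M\otimes d$ summands, $\phi_M$ is upper triangular with identity diagonal blocks and single off-diagonal entry $m\otimes d\mapsto (-1)^{k+1}d(m)\otimes 1$; hence it is invertible, with inverse $m\otimes 1\mapsto m\otimes 1$, $m\otimes d\mapsto (-1)^{k}d(m)\otimes 1+m\otimes d$.

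The only real computation, and the step where I expect the sign bookkeeping to be the main obstacle, is the intertwining identity $\phi_M\circ D=D^\mu\circ \phi_M$, which I would verify on the two kinds of generators. Evaluating the left side on $m\otimes 1$ gives $d(m)\otimes 1+(-1)^k\bigl((-1)^{k+1}d(m)\otimes 1+m\otimes d\bigr)$; the two $d(m)\otimes 1$ terms cancel and what remains is $(-1)^k m\otimes d=D^\mu(\phi_M(m\otimes 1))$. On $m\otimes d$ the left side is $\phi_M(d(m)\otimes d)=d(m)\otimes d$, where the potential $\otimes 1$ term is killed by $d^2=0$; the right side is $(-1)^{k+1}D^\mu(d(m)\otimes 1)=(-1)^{k+1}(-1)^{k+1}d(m)\otimes d=d(m)\otimes d$, since $d(m)$ has homological degree $k+1$. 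The two sides agree in both cases, so $\phi_M$ is a chain map, and combined with the preceding two points it is an isomorphism of dg $A$-modules.
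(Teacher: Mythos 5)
Your proof is correct and follows essentially the same route as the paper: the paper records the same explicit inverse $\psi_M$ (with sign $(-1)^k$ on the $d(m)\otimes 1$ term) and leaves the commutation with the differentials as ``an easy computation,'' which you have simply carried out in full, using the Koszul-rule differential that is consistent with the paper's use of $M\otimes \Bbbk[d]/\big(d^2\big)$ elsewhere (e.g.\ in Lemma~\ref{lem-contractible-dg-mod}).
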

\begin{proof}It is an easy computation to verify that $\phi_M$ commutes with the respective differentials on both sides. The inverse of $\phi_M$ is given by
\[
\psi_M \colon \ M^\mu\otimes \Bbbk[d]/\big(d^2\big)\lra M\otimes \Bbbk[d]/\big(d^2\big),
\]
where, for $m\in M_{k}$,
\[
\psi_M(m\otimes 1):=m\otimes 1,\qquad \psi_{M}(m\otimes d):=(-1)^kd(m)\otimes 1+m\otimes d.
\]
Clearly, both $\phi_M$ and $\psi_M$ are homogeneous $A$-module maps. The lemma follows.
\end{proof}

\begin{cor}\label{cor-functor-G}
The functor $\mc{G}_\infty\colon (A,d)\dmod\lra \mc{M}_3$ descends to an exact functor
\[
\mc{G}\colon \ \mc{C}(A\pmod)\lra \mc{SM}_3.
\]
\end{cor}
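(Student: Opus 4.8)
The plan is to verify the two assertions hidden in the word ``descends'': first that $\mc{G}_\infty$ carries null-homotopic morphisms to morphisms vanishing in $\mc{SM}_3$, so that it factors through the homotopy quotient to give a well-defined functor $\mc{G}$; and second that this $\mc{G}$ is exact, i.e.\ commutes with the shifts and preserves distinguished triangles. The descent is essentially formal given the preceding lemmas, while the exactness hides the only real subtlety.

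For the descent, suppose $f\colon M\to N$ is null-homotopic with $M,N\in\mc{C}(A\pmod)$. By Lemma~\ref{lem-contractible-dg-mod} it factors through $\lambda_M\colon M\to M\otimes\Bbbk[d]/(d^2)[1]$, hence $\mc{G}_\infty(f)$ factors through $\mc{G}_\infty(M\otimes\Bbbk[d]/(d^2)[1])$. By Lemma~\ref{lem-iso-dg-contractible-mod} this is isomorphic to $\mc{G}_\infty(M^\mu\otimes\Bbbk[d]/(d^2)[1])$. Since $M$ is a complex of projective $A$-modules, $M^\mu$ is a direct sum of shifted copies of $P_r\langle j\rangle$, so by Proposition~\ref{prop-abelian-equivalence} and \eqref{eqn-G-on-Pr} its image $\mc{G}_\infty(M^\mu)$ is a direct sum of grading-shifted copies of $Q$ on which $\dif_3$ acts by zero. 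Adjoining the free factor $\Bbbk[d]/(d^2)=\Lambda_1^\prime$ makes $\dif_3$ act freely, and using $Q\otimes\nu^*(\Lambda_1^\prime)\cong\Lambda_3$ we conclude that $\mc{G}_\infty(M^\mu\otimes\Bbbk[d]/(d^2))$ is a free, hence projective, $\Lambda_3$-module. Thus $\mc{G}_\infty(f)$ factors through a projective object and is zero in $\mc{SM}_3$, giving the well-defined functor $\mc{G}\colon\mc{C}(A\pmod)\to\mc{SM}_3$.

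For exactness I would first match the shifts. By \eqref{eqn-G-commute-with-grading-shifts} the homological shift corresponds to the grading shift $\{0,0,-1\}$, and I claim $\{0,0,-1\}\cong[1]_{\mc{SM}}$ on the image of $\mc{G}$. Every object of $\mc{C}(A\pmod)$ is termwise projective, so its image $X\in\mc{M}_3$ is free over the subalgebra $\Lambda_2\subset\Lambda_3$. For any such $X$ the multiplication map $\Lambda_3\otimes_{\Lambda_2}X\to X$ is a surjection from a free $\Lambda_3$-module whose kernel is naturally isomorphic to $X\{0,0,1\}$, yielding a short exact sequence $0\to X\{0,0,1\}\to\Lambda_3\otimes_{\Lambda_2}X\to X\to 0$ with projective middle term; hence $X\{0,0,1\}\cong\Omega X$ and $X\{0,0,-1\}\cong X[1]_{\mc{SM}}$ in $\mc{SM}_3$, naturally in $X$. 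For the triangles, every triangle in $\mc{C}(A\pmod)$ is up to isomorphism a mapping-cone triangle $M\xrightarrow{f}N\to\mathrm{Cone}(f)\to M[1]$, and comparing the cone differential with \eqref{eqn-d3-action-on-d3-cone} gives $\mc{G}_\infty(\mathrm{Cone}(f))\cong\mathrm{C}_3(\mc{G}_\infty(f))$. The right-hand column of \eqref{eqn-C3cone-as-pushout} is a short exact sequence $0\to\mc{G}(N)\to\mathrm{C}_3(\mc{G}_\infty f)\to\mc{G}(M)\{0,0,-1\}\to 0$ in $\mc{M}_3$, which produces a distinguished triangle in $\mc{SM}_3$; rotating and applying the shift identification $\mc{G}(M)\{0,0,-1\}\cong\mc{G}(M)[1]_{\mc{SM}}$ recovers exactly the image of the mapping-cone triangle, so $\mc{G}$ is exact.

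I expect the main obstacle to be the exactness, and within it the identification of the purely homological shift $\{0,0,-1\}$ with the stable shift $[1]_{\mc{SM}}$. A priori $[1]_{\mc{SM}}$ is the cosyzygy over the full algebra $\Lambda_3$ and so involves all three differentials, whereas the complex shift only moves the $\dif_3$-grading; the two coincide only because the image of $\mc{G}$ consists of $\Lambda_2$-free modules, which is what powers the short exact sequence above. Establishing the naturality of that sequence, together with the sign bookkeeping needed both to identify $\mc{G}_\infty(\mathrm{Cone}(f))$ with $\mathrm{C}_3(\mc{G}_\infty f)$ and to match the connecting morphism after rotation, is the delicate part of the argument.
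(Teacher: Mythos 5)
Your proposal is correct and follows essentially the same route as the paper: descent is obtained exactly as you describe, via Lemmas~\ref{lem-contractible-dg-mod} and~\ref{lem-iso-dg-contractible-mod} reducing to the projectivity of $\mc{G}_\infty\big(M^\mu\otimes\Bbbk[d]/\big(d^2\big)\big)$, and exactness via a short exact sequence with stably trivial (projective) middle term identifying $\{0,0,-1\}$ with $[1]_{\mc{SM}}$, followed by the observation that short exact sequences of $\Lambda_3$-modules yield distinguished triangles in $\mc{SM}_3$. Your auxiliary sequence $0\to X\{0,0,1\}\to\Lambda_3\otimes_{\Lambda_2}X\to X\to 0$ is, up to sign conventions, the image under $\mc{G}_\infty$ of the paper's sequence $0\to M\to M\otimes\Bbbk[d]/\big(d^2\big)[1]\to M[1]\to 0$, since $\Lambda_3\otimes_{\Lambda_2}X\cong X\otimes\nu^*(\Lambda_1^\prime)$ for $\Lambda_2$-free $X$, so the two shift identifications coincide.
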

\begin{proof}By Lemma \ref{lem-contractible-dg-mod}, $\mc{C}(A\pmod)$ is the categorical quotient of the category of chain complexes of graded projective $A$-modules by the ideal of morphisms factoring through objects of the form $M\otimes\Bbbk[d]/\big(d^2\big)$, where~$M$ ranges over all chain-complexes of graded projective $A$-modules. For~$\mc{G}$ to be well-defined, it suffices to check that, under the functor $\mc{G}_\infty$, such objects are sent to the class of projective-injective objects in~$\mc{M}_3$.

By Lemma \ref{lem-iso-dg-contractible-mod}, there is an isomorphism of graded dg modules
\[
M\otimes \Bbbk[d]/\big(d^2\big)\cong M^\mu \otimes \Bbbk[d]/\big(d^2\big)\cong \bigoplus_k M^\mu_k \otimes \Bbbk[d]/\big(d^2\big).
\]
As each $M^\mu_k$ is a projective $A$-module, the result follows since, by Proposition \ref{prop-abelian-equivalence},
\[
\mc{G}\big(P_{r}\langle j\rangle \otimes \Bbbk[d]/\big(d^2\big)[-k]\big)\cong Q\{r,r+j,k\}\otimes \Lambda_1^\prime \cong \Lambda_3\{r,r+j,k\}
\]
holds for any $r,j,k\in \Z$.

It remains to show that $\mc{G}$ is exact, i.e., it commutes with homological shifts and takes distinguished triangles to distinguished triangles.

Given a complex $M$ of graded projective modules over $A$, there is a short exact sequence
\[
0\lra M \stackrel{\lambda_M}{\lra} M\otimes \Bbbk[d]/\big(d^2\big)[1] \lra M[1]\lra 0.
\]
Applying $\mc{G}_\infty$ to the short exact sequence, we obtain a short exact sequence of $\mc{M}_3$:
\[
0\lra \mc{G}_\infty(M) \xrightarrow{ \mc{G}_\infty(\lambda_M)} \mc{G}_\infty \big(M\otimes \Bbbk[d]/\big(d^2\big)[1]\big) \lra \mc{G}_\infty( M[1])\lra 0,
\]
which, in turn, leads to a distinguished triangle in $\mc{SM}_3$:
\[
 \mc{G}(M) \xrightarrow{ \mc{G}(\lambda_M)} \mc{G} \big(M\otimes \Bbbk[d]/\big(d^2\big)[1]\big) \lra \mc{G}( M[1]) \stackrel{[1]}{\lra} \mc{G}(M)[1]_{\mc{SM}}.
\]
By the earlier discussion in this proof, the term $ \mc{G}_\infty \big(M\otimes \Bbbk[d]/\big(d^2\big)[1]\big)$ vanishes in $\mc{SM}_3$, and thus there is an isomorphism
\begin{gather*}
\mc{G}( M[1]) \cong \mc{G}(M)[1]_{\mc{SM}},
\end{gather*}
which is clearly functorial in~$M$.

Lastly, notice that distinguished triangles in~$\mc{C}(A\pmod)$, up to isomorphism, arise from short exact sequences of chain-complexes of graded projective $A$-modules
\[
0\lra M_1\lra M \lra M_2\lra 0.
\]
Applying $\mc{G}_\infty$ to this sequence, we obtain a short exact sequence of trigraded $\Lambda_3$-modules. This sequence results in a distinguished triangle in~$\mc{SM}_3$, being the image of the original triangle in~$\mc{C}(A\pmod)$. The exactness of $\mc{G}$ now follows.
\end{proof}

Denote by $\mc{C}^b(A\pmod)$, $\mc{C}^+(A\pmod)$ and $\mc{C}^-(A\pmod)$ the full triangulated subcategories of $\mc{C}(A\pmod)$ consisting of, respectively, bounded, bounded-from-below and bounded-from-above complexes of graded projective modules over $A$. The localization functor from~$\mc{C}(A\pmod)$ into~$\mc{D}(A)$ restricts to equivalences of categories on these full-subcategories onto their respective images in the~(dg) derived category $\mc{D}(A)$.

\begin{thm}\label{thm-faithful-embedding}
The functor $\mc{G}\colon \mc{C}(A\pmod)\lra \mc{SM}_3$ is fully-faithful.
\end{thm}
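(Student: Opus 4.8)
The plan is to pass to morphism spaces and match the relations quotiented out on each side. Recall that $\mc{G}$ is the composite of the abelian equivalence $\mc{G}_\infty\colon (A,d)\dmod \to \mc{M}_3$ with the projection $\mc{M}_3 \to \mc{SM}_3$. Since $\mc{G}_\infty$ is an equivalence, it restricts to a bijection $\Hom_{(A,d)}(M,N) \xrightarrow{\sim} \Hom_{\mc{M}_3}(\mc{G}_\infty M, \mc{G}_\infty N)$ between spaces of dg-morphisms (chain maps). On the source side, $\Hom_{\mc{C}(A\pmod)}(M,N)$ is obtained from $\Hom_{(A,d)}(M,N)$ by killing null-homotopic maps; on the target side, by Lemma~\ref{lem-morphism-in-stable-cat}, $\Hom_{\mc{SM}_3}(\mc{G}M,\mc{G}N)$ is obtained from $\Hom_{\mc{M}_3}(\mc{G}_\infty M,\mc{G}_\infty N)$ by killing the image of $\dif_1\dif_2\dif_3$ acting, via~\eqref{eqn-d-action-on-morphism}, on $\Hom_\Bbbk(\mc{G}_\infty M,\mc{G}_\infty N\{-1,-1,-1\})$. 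Thus the theorem amounts to the assertion that $\mc{G}_\infty$ identifies null-homotopic chain maps with this subgroup of $\dif_1\dif_2\dif_3$-images.

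Fullness is then essentially formal: the projection $\mc{M}_3 \to \mc{SM}_3$ is surjective on morphisms, being the quotient by an ideal of morphisms, and $\mc{G}_\infty$ is bijective on dg-Hom; so any morphism $\mc{G}M \to \mc{G}N$ in $\mc{SM}_3$ lifts first to $\mc{M}_3$ and then to a chain map $M \to N$, whose class under $\mc{G}$ recovers it.

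For faithfulness I must show that a chain map $f$ with $\mc{G}(f)=0$ is null-homotopic. One direction is already contained in the construction of $\mc{G}$: by Lemma~\ref{lem-contractible-dg-mod} a null-homotopic $f$ factors through $M\otimes\Bbbk[d]/\big(d^2\big)[1]$, and Corollary~\ref{cor-functor-G} sends this object to a projective $\Lambda_3$-module, forcing $\mc{G}(f)=0$. The reverse inclusion is the heart of the matter. Suppose $\mc{G}_\infty(f)=\dif_1\dif_2\dif_3(h)$ with $h\in\Hom_\Bbbk(\mc{G}_\infty M,\mc{G}_\infty N\{-1,-1,-1\})$, and set $s:=\dif_1\dif_2(h)$. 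Since $\Hom_\Bbbk$ carries the $\Lambda_3$-action with $\dif_1^2=\dif_2^2=0$, one gets $\dif_1(s)=\dif_2(s)=0$, while $\dif_3\dif_1\dif_2=\dif_1\dif_2\dif_3$ in $\Lambda_3$ yields $\dif_3(s)=\mc{G}_\infty(f)$. The vanishing $\dif_1(s)=\dif_2(s)=0$ says that $s$ commutes with $\dif_1$ and $\dif_2$, hence under $\mc{G}_\infty^{-1}$ is a homomorphism of graded $A$-modules of homological degree $-1$; and $\dif_3(s)=\mc{G}_\infty(f)$ unwinds, through~\eqref{eqn-d-action-on-morphism} and the identification $\dif_3=(-1)^k d$, into the null-homotopy identity $f=ds+sd$. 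Therefore $f$ is null-homotopic, and $\mc{G}$ is faithful.

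The main obstacle lies in this reverse inclusion: one must spot that the single element $s=\dif_1\dif_2(h)$ simultaneously solves both constraints defining a chain homotopy. The remaining work is bookkeeping, namely checking against the grading dictionary~\eqref{eqn-G-commute-with-grading-shifts} that ``annihilated by $\dif_1$ and $\dif_2$'' coincides with ``$A$-linear'' (so that $s$ preserves the vertex grading $r=i-j$ and the internal grading $j$ while lowering homological degree by one), and tracking the sign $(-1)^k$ in $\dif_3=(-1)^k d$ so that $\dif_3(s)=\mc{G}_\infty(f)$ becomes precisely the homotopy relation $f=ds+sd$.
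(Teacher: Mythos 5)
Your proof is correct, but it follows a genuinely different route from the paper's. The paper proceeds by d\'evissage: it first verifies full-faithfulness on the compact generators $P_r\langle j\rangle[k]$ by explicitly computing and matching $\Hom_{\mc{C}(A)}$ with $\Hom_{\mc{SM}_3}$ via Lemma~\ref{lem-morphism-in-stable-cat}, then extends to semi-bounded complexes by truncation, and finally to arbitrary objects of $\mc{C}(A\pmod)$ by another truncation and the five lemma. You instead compare the two ideals being quotiented out of the (isomorphic, via $\mc{G}_\infty$) ambient morphism spaces: the null-homotopic maps on one side and $\dif_1\dif_2\dif_3\Hom_\Bbbk(M,N\{-1,-1,-1\})$ on the other. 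The forward containment is exactly the content of Corollary~\ref{cor-functor-G} via Lemma~\ref{lem-contractible-dg-mod}, and your reverse containment via $s=\dif_1\dif_2(h)$ is the real new step; I checked it: $\dif_1(s)=\dif_2(s)=0$ since $\dif_1\dif_1\dif_2=\dif_2\dif_1\dif_2=0$, and $\dif_3(s)=\dif_1\dif_2\dif_3(h)=\mc{G}_\infty(f)$, which by \eqref{eqn-d-action-on-morphism} (for a map of tridegree $(0,0,-1)$) reads $\dif_3 s+s\dif_3=\mc{G}_\infty(f)$ and, after twisting $s$ by $(-1)^{k+1}$ on homological degree $k$, is precisely $f=dh+hd$. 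Your approach buys a uniform treatment of unbounded complexes with no truncation or compact-generation input, and it makes the matching of the two ideals conceptually transparent (the homotopy $h$ is ``$\dif_1\dif_2$ of the witness of stable triviality''); note it uses projectivity of the terms of $M$ only in the forward containment. The paper's approach is less computation-specific in the reverse direction and is the standard template when explicit homotopies are not available, but at the cost of the boundedness bookkeeping. Either argument is acceptable; yours is arguably shorter and more self-contained given Lemmas~\ref{lem-morphism-in-stable-cat}--\ref{lem-iso-dg-contractible-mod}.
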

\begin{proof}The proof is divided into three steps.

As the first step, we claim that $\mc{G}$, when restricted to the full-subcategories $\mc{C}^\pm(A\pmod)$, is fully-faithful. To do this, we identify these categories with their images in $\mc{D}(A)$ under localization, and use the fact that the (dg) derived category of $(A, d)\dmod$ is compactly generated by the collection of objects $\{P_r\langle j\rangle[k]|r,j,k\in \Z\}$. Then, in order to prove the claim, we just need to compare the morphism spaces between the generating objects $P_{r}\langle j \rangle[k]$, $r,j,k\in \Z$, and their images $\mc{G}(P_{r}\langle j \rangle[k])=Q\{r+j,j,-k\}$ in $\mc{SM}_3$ \cite[Lemma~4.2]{K}.

On the one hand, we have
\begin{gather*}
 \Hom_{\mc{C}(A)} (P_{r_1}\langle j_1 \rangle[k_1], P_{r_2}\langle j_2 \rangle[k_2] )=
 \begin{cases}
 \Bbbk (r_1), & r_1=r_2,~j_1=j_2,~k_1=k_2,\\
 \Bbbk (r_1\,|\,r_1+1), & r_1=r_2+1,~j_1=j_2,~ k_1=k_2,\\
 \Bbbk (r_1\,|\,r_1-1), & r_1=r_2-1,~j_1=j_2-1,~k_1=k_2,\\
 \Bbbk (r_1\,|\,r_1+1|r_1), & r_1=r_2,~j_1=j_2+1,~ k_1=k_2,\\
 0, & \textrm{otherwise}.
 \end{cases}
\end{gather*}
On the other hand, we can compute the morphism spaces of $\mc{G}(P_r\langle j \rangle [k])$ using Lemma~\ref{lem-morphism-in-stable-cat}
\begin{gather*}
 \Hom_{\mc{SM}_3}\left(\mc{G}(P_{r_1}\langle j_1 \rangle[k_1]),\mc{G}( P_{r_2}\langle j_2 \rangle[k_2])\right) \\
 \qquad{} \cong \Hom_{\Lambda_3}(Q,Q\{r_2+j_2-r_1-j_1, j_2-j_1,k_1-k_2\}) \\
 \qquad{} \cong \Hom_{\Lambda_2}(Q,Q\{r_2+j_2-r_1-j_1, j_2-j_1,k_1-k_2\}).
\end{gather*}
The second isomorphism follows since $\dif_3$ acts trivially on $Q$ and its grading shifts. Note that the last space is non-zero only if $k_1=k_2$, since the $\Lambda_2$ action preserves the $k$-grading. When $k_1=k_2$, we can compute
\begin{gather*}
 \Hom_{\mc{SM}_3} (\mc{G}(P_{r_1}\langle j_1 \rangle[k_1])_{\mc{SM}},\mc{G}( P_{r_2}\langle j_2 \rangle[k_2]_{\mc{SM}}) ) \cong
 \begin{cases}
 \Bbbk, & r_1=r_2,~j_1=j_2,\\
 \Bbbk, & r_1=r_2+1,~j_1=j_2,\\
 \Bbbk, & r_1=r_2-1,~j_1=j_2-1,\\
 \Bbbk, & r_1=r_2,~j_1=j_2+1.
 \end{cases}
\end{gather*}
Comparing these computations, the claim follows.

In the second step, we show that
\[
\Hom_{\mc{C}(A)}\left(M,N\right)\stackrel{\mc{G}}{\lra}\Hom_{\mc{SM}_3}(\mc{G}(M),\mc{G}(N))
\]
is a bijection if one of $M$ or $N$ lies in $\mc{C}^\pm (A\pmod)$. Assume, for instance, that $M\in \mc{C}^+(A\pmod)$ and $N\in \mc{C}(A\pmod)$. Without loss of generality, we may assume that $M_k=0$ for all $k<0$. Then, $N$ fits into a distinguished triangle
\[
N_{\geq -1} \lra N \lra N_{\leq -2} \stackrel{[1]}{\lra} N_{\geq -1}[1],
\]
where $N_{\geq -1}$ is the subcomplex of $N$ of the form
\[
N_{\geq -1}=\Big(
\cdots \lra 0\lra N_{-1} \stackrel{d_{-1}}{\lra} N_0 \stackrel{d_0}{\lra} N_{1} \stackrel{d_1}{\lra} \cdots
\Big),
\]
and $N_{\leq -2}$ is the quotient complex
\[
N_{\leq -2}=\Big(
\cdots \stackrel{d_{-5}}{\lra} N_{-4} \stackrel{d_{-4}}{\lra} N_{-3} \stackrel{d_{-3}}{\lra} N_{-2} {\lra} 0 \lra \cdots \Big).
\]
It is readily seen that, in the homotopy category, we have
\[
\Hom_{\mc{C}(A)}(M,N_{<-1})=0, \qquad \Hom_{\mc{C}(A)}(M,N_{<-1}[-1])=0.
\]
Likewise, as the $k$-degrees of the objects $\mc{G}(N_{<-1})$ and $\mc{G}(N_{<{-1}}[-1])$ are bounded above by $-1$, we have that, by Lemma \ref{lem-morphism-in-stable-cat},
\begin{gather*}
\Hom_{\mc{SM}_3}(\mc{G}(M),\mc{G}(N_{\leq -2}))=0, \qquad \Hom_{\mc{SM}_3}(\mc{G}(M),\mc{G}(N_{\leq -2})[-1]_{\mc{SM}})=0.
\end{gather*}
Therefore, by the previous step, we have that
\begin{gather*}
\Hom_{\mc{C}(A)}(M,N) =\Hom_{\mc{C}(A)}(M,N_{\geq -1})\\
\qquad{} \xrightarrow{\cong} \ \Hom_{\mc{SM}_3}(\mc{G}(M),\mc{G}(N)) =\Hom_{\mc{SM}_3}(\mc{G}(M),\mc{G}(N_{\geq -1})).
\end{gather*}
The other cases are similar, and we leave them as exercises to the reader.

Finally, assume both $M$ and $N$ are any objects of $\mc{C}(A\pmod)$. We can truncate $N$ as
\[
N_{\geq 0} \lra N \lra N_{\leq -1} \stackrel{[1]}{\lra} N_{\geq 0}[1],
\]
Then, we have the commutative diagram
\[
\xymatrix@C=0.9em{
\cdots \ar[r] & \Hom_{\mc{C}(A)}(M, N_{\geq 0}) \ar[r] \ar[d]^{\mc{G}} & \Hom_{\mc{C}(A)}(M, N) \ar[r] \ar[d]^{\mc{G}} & \Hom_{\mc{C}(A)}(M, N_{\leq -1}) \ar[r] \ar[d]^{\mc{G}} & \cdots\\
\cdots \ar[r] & \Hom_{\mc{SM}_3}(\mc{G}(M),\mc{G}(N_{\geq 0})) \ar[r] & \Hom_{\mc{SM}_3}(\mc{G}(M),\mc{G}(N)) \ar[r] & \Hom_{\mc{SM}_3}(\mc{G}(M),\mc{G}(N_{\leq -1})) \ar[r] & \cdots
}
\]
The middle vertical arrow is then an isomorphism by the classical five lemma and the previous case. This finishes the proof of the theorem.
\end{proof}

In \cite{KhSe}, a braid group action on the homotopy category $\mc{C}(A)$ is introduced. The braid group generator $\mc{R}_r$ acts, on a chain complex $M$ of graded $A$-modules, by
\begin{gather*}
 \mc{R}_r(M):= \mathrm{C}\Big(P_r\otimes (r)M \stackrel{f}{\lra} M \Big),
\end{gather*}
where $f$ is the left $A$-module map determined by $(r)\otimes (r)m \mapsto (r)m$.

\begin{lem}\label{lem-G-intertwines-braid-group-action}
The functor $\mc{G}$ commutes with the braid group actions on $\mc{C}(A\pmod)$ and $\mc{SM}_3$.
\end{lem}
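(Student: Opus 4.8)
The plan is to verify the asserted intertwining on the generators $\mc{R}_r$ of $\mathrm{Br}_\infty$ and then propagate it to the whole group. On $\mc{C}(A\pmod)$ the generator acts, following \cite{KhSe}, by the homological mapping cone of the evaluation map, $\mc{R}_r(M)=\mathrm{C}\big(P_r\otimes (r)M \xrightarrow{f} M\big)$, while on $\mc{SM}_3$ it acts by $\mc{R}_r=\mathrm{C}_3(\mathrm{in}_r)$, the $\dif_3$-cone of $\mathrm{in}_r\colon \mc{U}_r\To \Id$. Since $\mc{G}$ is induced by the abelian equivalence $\mc{G}_\infty\colon (A,d)\dmod\to\mc{M}_3$ of Proposition~\ref{prop-abelian-equivalence} (descended to stable categories in Corollary~\ref{cor-functor-G}), I would run the comparison at the level of $\mc{G}_\infty$, producing a \emph{natural} isomorphism of objects in the abelian category $\mc{M}_3$, and only at the very end pass to $\mc{SM}_3$. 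Note first that $\mc{R}_r$ preserves $\mc{C}(A\pmod)$, as $P_r\otimes(r)M$ and the cone are again complexes of projectives, so the statement is well-posed.

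Two identifications carry all the weight. First, $\mc{G}_\infty$ transports the homological mapping-cone construction to the $\dif_3$-cone construction $\mathrm{C}_3$: this is immediate from the definitions once one recalls that $\mc{G}_\infty$ sends the homological shift $[1]$ to $\{0,0,-1\}$ and realises the homological differential $d$ as the $\dif_3$-action $(-1)^k d$ (cf.\ \eqref{eqn-G-commute-with-grading-shifts} and Corollary~\ref{cor-functor-G}); the sign appearing in \eqref{eqn-d3-action-on-d3-cone} matches the sign in the mapping-cone differential. Second, I claim a natural isomorphism
\[
\mc{G}_\infty(P_r\otimes (r)M)\;\cong\;\mc{U}_r(\mc{G}_\infty(M)),
\]
under which $\mc{G}_\infty(f)$ is identified with $\mathrm{in}_r$. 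Conceptually, $M\mapsto P_r\otimes(r)M$ first projects onto the vertex-$r$ idempotent and then induces up along $P_r\otimes_\Bbbk(-)$; under $\mc{G}_\infty$ the vertex-$r$ component $(r)M$ corresponds precisely to the ``remember only $\dif_3$'' slice $\bigoplus_{i-j=r}(\mc{G}_\infty M)_{i,j}$ on the plane $i-j=r$ (the anti-diagonal that Section~\ref{subsec-Lambda-2-zig-zag} attaches to vertex $r$), whereas $P_r$ corresponds to the square $Q$ by \eqref{eqn-G-on-Pr}, so that the induction becomes $Q\otimes(-)$. Both $f$ and $\mathrm{in}_r$ are the tautological evaluation maps $(r)\otimes (r)m\mapsto (r)m$ and $\omega\otimes m\mapsto m$, so they match under this identification.

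I expect the bookkeeping in the second identification to be the main obstacle. One must check, on the generating modules $P_{r'}\langle j'\rangle[k']$, using the explicit four-element basis of $P_{r'}$ and the matching $\mc{G}_\infty(P_{r'}\langle j'\rangle[k'])\cong Q\{r'+j',j',-k'\}$, that the four corners of $P_r\otimes(r)M$ land in the four corners $\omega,\dif_1\omega,\dif_2\omega,\dif_1\dif_2\omega$ of $Q$ in the correct tridegrees, and that the differential $d$ on $(r)M$ is transported to the $\dif_3$-action on the $Q$-slices with the expected $(-1)^k$ sign. Here the conventions $\langle 1\rangle\leftrightarrow\{1,1,0\}$, $[1]\leftrightarrow\{0,0,-1\}$, $\mc{T}\leftrightarrow\{1,0,0\}$ must be combined consistently, and the signs coming from the Hopf coproduct in the $\Lambda_3$-action on $Q\otimes(\mc{G}_\infty M)_{i,j}$ must be reconciled with the signs in the $A$-action on $P_r\otimes(r)M$. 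Naturality in $M$ is automatic, since every ingredient ($(r)(-)$, $P_r\otimes_\Bbbk(-)$, $\mathrm{in}_r$, and the cone) is functorial.

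Combining the two identifications yields a natural isomorphism $\mc{G}_\infty(\mathrm{C}(f))\cong\mathrm{C}_3(\mathrm{in}_r)$ in $\mc{M}_3$, hence a functorial isomorphism $\mc{G}(\mc{R}_r M)\cong\mc{R}_r(\mc{G} M)$ in $\mc{SM}_3$; that is, $\mc{G}\,\mc{R}_r\cong\mc{R}_r\,\mc{G}$ for each generator. Finally, since $\mc{R}_r$ is invertible both on $\mc{C}(A\pmod)$ and on $\mc{SM}_3$ (Theorem~\ref{thm-3-braid-group-actions}), conjugating this isomorphism gives $\mc{G}\,\mc{R}_r^{-1}\cong\mc{R}_r^{-1}\,\mc{G}$ as well, and functoriality then propagates the intertwining to every word in the $\mc{R}_r^{\pm1}$, i.e.\ to the entire $\mathrm{Br}_\infty$-action.
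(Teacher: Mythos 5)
Your proposal is correct and follows essentially the same route as the paper's proof: reduce to the generators $\mc{R}_r$, identify $\mc{G}_\infty(P_r\otimes (r)M)$ with $\mc{U}_r(\mc{G}_\infty(M))$ via equation~\eqref{eqn-G-on-Pr} and Proposition~\ref{prop-abelian-equivalence} so that $f$ matches $\mathrm{in}_r$, and use that $\mc{G}$ carries the mapping cone to the $\dif_3$-cone. The paper states exactly these three ingredients (and omits the bookkeeping you flag), so your write-up is a faithful, slightly more detailed version of the same argument.
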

\begin{proof}It suffices to show that each $\mc{R}_r$ commutes with $\mc{G}$. This follows from equation~\eqref{eqn-G-on-Pr}, Proposition \ref{prop-abelian-equivalence} and the fact that $\mc{G}$ sends the cone construction in $\mc{C}(A\pmod)$ to that of the $\dif_3$-cone in $\mc{SM}_3$.
\end{proof}

\begin{cor}The action of $\mathrm{Br}_\infty$ on the category $\mc{SM}_3$ is faithful.
\end{cor}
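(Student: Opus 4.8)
The plan is to deduce faithfulness on $\mc{SM}_3$ from the (known) faithfulness of the $\mathrm{Br}_\infty$-action on the homotopy category $\mc{C}(A\pmod)$, by transporting the latter across the fully faithful, action-intertwining functor $\mc{G}$ supplied by Theorem~\ref{thm-faithful-embedding} and Lemma~\ref{lem-G-intertwines-braid-group-action}. For a braid $\beta\in\mathrm{Br}_\infty$ I write $\mc{R}_\beta$ for the corresponding autoequivalence, obtained by composing the generators $\mc{R}_r^{\pm1}$ according to any word representing $\beta$; by Theorem~\ref{thm-3-braid-group-actions} this is well defined up to isomorphism. Faithfulness means precisely that $\mc{R}_\beta\cong\Id_{\mc{SM}_3}$ forces $\beta=1$, so this is the implication I would establish.

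First I would record the purely formal fact that a fully faithful functor reflects isomorphisms of functors mapping into it: if $F$ is fully faithful and $\theta\colon F\Phi\Rightarrow F\Psi$ is a natural isomorphism, then for each object $X$ the isomorphism $\theta_X$ equals $F(\phi_X)$ for a unique $\phi_X$, which is itself an isomorphism because $F$ is fully faithful, and the naturality of $\phi$ follows by applying faithfulness of $F$ to the naturality squares of $\theta$. Hence $\Phi\cong\Psi$.

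Now assume $\mc{R}_\beta\cong\Id_{\mc{SM}_3}$. By Lemma~\ref{lem-G-intertwines-braid-group-action} the functor $\mc{G}$ intertwines the two braid actions, so on $\mc{C}(A\pmod)$ one has $\mc{G}\,\mc{R}_\beta\cong\mc{R}_\beta\,\mc{G}\cong\mc{G}$, i.e.\ $\mc{G}\,\mc{R}_\beta\cong\mc{G}\,\Id$ as functors into $\mc{SM}_3$. Since $\mc{G}$ is fully faithful (Theorem~\ref{thm-faithful-embedding}), the formal fact above yields $\mc{R}_\beta\cong\Id$ on $\mc{C}(A\pmod)$. It then suffices to invoke faithfulness of the $\mathrm{Br}_\infty$-action on $\mc{C}(A\pmod)$ to conclude $\beta=1$, so the transfer across $\mc{G}$ is automatic once that input is in hand.

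The remaining and genuinely substantive input is the faithfulness of the $\mathrm{Br}_\infty$-action on $\mc{C}(A\pmod)$, which I would reduce to the finite, bounded situation handled in \cite{KhSe}. A nontrivial $\beta$ is a word in finitely many generators $\sigma_r$ with indices in some interval $[a,b]\subset\Z$, so $\beta$ lies in a finite braid subgroup $\mathrm{Br}_n\subset\mathrm{Br}_\infty$; truncating $A_\infty$ by the idempotent $\sum_{a\le r\le b}(r)$ recovers a finite zig-zag algebra $A_{n-1}$ with the generators $\mc{R}_r$ compatible with this truncation. Since $\mc{R}_\beta$ preserves the bounded subcategory $\mc{C}^b(A\pmod)$, triviality of $\mc{R}_\beta$ on $\mc{C}(A\pmod)$ would force triviality of the corresponding finite braid action on $\mc{C}^b(A_{n-1}\pmod)$, contradicting the faithfulness of \cite{KhSe}. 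The main obstacle is exactly controlling these truncation and boundary effects, so that the $\mathrm{Br}_\infty$-action restricts compatibly and faithfully to the finite zig-zag algebras; once that reduction is secured, everything above combines to give $\beta=1$, proving faithfulness on $\mc{SM}_3$.
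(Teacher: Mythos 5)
Your argument is essentially the paper's own proof: the paper likewise deduces faithfulness on $\mc{SM}_3$ from the Khovanov--Seidel faithfulness on $\mc{C}^b(A_m\pmod)$ by combining Theorem~\ref{thm-faithful-embedding}, Lemma~\ref{lem-G-intertwines-braid-group-action}, and a limit $m\to\infty$ over finite zig-zag truncations. You merely spell out two steps the paper leaves implicit --- that a fully faithful intertwining functor reflects natural isomorphisms of functors, and the truncation to a finite sub-braid-group --- and both are handled correctly.
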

\begin{proof}In \cite[Corollary 1.2]{KhSe}, it is shown that the braid group $\mathrm{Br}_{m+1}$ on $m+1$ strands acts faithfully on $ \mc{C}^b(A_m\pmod) $. The Corollary then follows from Theorem~\ref{thm-faithful-embedding}, Lemma \ref{lem-G-intertwines-braid-group-action} and taking the limit $m\to \infty$.
\end{proof}

\subsection*{Acknowledgements}
M.K.\ was partially supported by grants DMS-1406065, DMS-1664240, and DMS-1807425 from the NSF, while Y.Q.\ was partially supported by the NSF grant DMS-1947532 when working on this paper.

The first author learned algebraic topology for the first time from the Russian classic by Dmitry Fuchs and Anatolii Fomenko \cite{FF} (in its first edition named \emph{Homotopic Topology}), while the second author enjoyed teaching graduate courses out of this reprinted classic at his previous work institution. It is our great pleasure to dedicate this short note to Dmitry Fuchs on the occasion of his eightieth anniversary.

\pdfbookmark[1]{References}{ref}
\LastPageEnding

\end{document}